\def\RSthmtxt{theorem~}\newref{thm}{name = \RSthmtxt}}
\def\RSlemtxt{lemma~}\newref{lem}{name = \RSlemtxt}}
\numberwithin{equation}{section}
\numberwithin{figure}{section}
\theoremstyle{plain}
\newtheorem{thm}{\protect\theoremname}
\theoremstyle{remark}
\newtheorem{rem}[thm]{\protect\remarkname}
\theoremstyle{plain}
\newtheorem{prop}[thm]{\protect\propositionname}
\theoremstyle{definition}
\newtheorem{defn}[thm]{\protect\definitionname}
\theoremstyle{plain}
\newtheorem{lem}[thm]{\protect\lemmaname}
\theoremstyle{plain}
\newtheorem{cor}[thm]{\protect\corollaryname}
\def\Xint#1{\mathchoice
{\XXint\displaystyle\textstyle{#1}}%
{\XXint\textstyle\scriptstyle{#1}}%
{\XXint\scriptstyle\scriptscriptstyle{#1}}%
{\XXint\scriptscriptstyle\scriptscriptstyle{#1}}%
\!\int}
\def\XXint#1#2#3{{\setbox0=\hbox{$#1{#2#3}{\int}$ }
\vcenter{\hbox{$#2#3$ }}\kern-.6\wd0}}
\def\dashint{\Xint-}
\newcommand\restr[2]{{
  \left.\kern-\nulldelimiterspace 
  #1 
  \vphantom{\big|} 
  \right|_{#2} 
  }}
\let\div\relax
\DeclareMathOperator\div{div}
\DeclareMathOperator\vol{vol}
\DeclareMathOperator\Div{div}
\DeclareMathOperator\curl{curl}
\DeclareMathOperator\supp{supp}
\newcommand{\dd}[0]{\partial}
\newcommand{\grad}[0]{\nabla}
\newcommand{\T}[0]{\mathbb T^d}
\theoremstyle{plain}
\newcommand{\Ig}[2]{I^{g;#1}_{#2}}
\providecommand{\corollaryname}{Corollary}
\providecommand{\definitionname}{Definition}
\providecommand{\lemmaname}{Lemma}
\providecommand{\propositionname}{Proposition}
\providecommand{\remarkname}{Remark}
\providecommand{\theoremname}{Theorem}
\begin{document}
\global\long\def\divop{\operatorname{div}}%
\global\long\def\supp{\operatorname{supp}}%
\global\long\def\curl{\operatorname{curl}}%

\global\long\def\grad{\nabla}%

\global\long\def\dd{\partial}%

\global\long\def\Ig{I_{\#2}^{g;\#1}}%

\global\long\def\T{\mathbb{T}^{N}}%

\setlength{\parskip}{1ex}

\title[Epochs of regularity for wild H\"older solutions]{Epochs of 
regularity for wild Hölder-continuous solutions of the Hypodissipative Navier-Stokes System}
\author{Aynur Bulut}
\address{Louisiana State University, 303 Lockett Hall, Baton Rouge, LA 70803}
\email{aynurbulut@lsu.edu}
\author{Manh Khang Huynh}
\address{Institute for Advanced Study, 1 Einstein Dr, Princeton, NJ 08540}
\email{hmkhang24@ias.edu}
\author{Stan Palasek}
\address{UCLA Department of Mathematics}
\email{palasek@math.ucla.edu}
\begin{abstract}
We consider the hypodissipative Navier-Stokes equations on $[0,T]\times\mathbb{T}^{d}$
and seek to construct non-unique, H\"older-continuous solutions with
epochs of regularity (smooth almost everywhere outside a small singular
set in time), using convex integration techniques. In particular,
we give quantitative relationships between the power of the fractional
Laplacian, the dimension of the singular set, and the regularity of
the solution. In addition, we also generalize the usual vector calculus
arguments to higher dimensions with Lagrangian coordinates.
\end{abstract}

\maketitle

\section{Introduction}

Fix $d\geq 3$.  We consider the hypodissipative Navier-Stokes equations
\begin{equation}
\begin{cases}
\partial_{t}v+(-\Delta)^{\gamma}v+\divop\left(v\otimes v\right)+\nabla p=0\\
\divop v=0
\end{cases}\label{eq:hypo_NS}
\end{equation}
on the periodic domain $\mathbb{T}^{d}$, where $0<\gamma<1$ denotes the strength of the
fractional dissipation, $v:[0,T]\times\mathbb{T}^{d}\to\mathbb{R}^{d}$
is the velocity field and $p:[0,T]\times\mathbb{T}^{d}\to\mathbb{R}$
is the pressure.

Recently, in the study of hydrodynamic turbulence, significant
attention has been directed towards problems such as Onsager's conjecture,
which roughly states that the kinetic energy of an ideal fluid may
fail to be conserved when the regularity is less than $\frac{1}{3}$.

The starting point for much of this work in recent years is a nonuniqueness
result, using ideas from convex integration, due to De Lellis and Sz\'ekelyhidi
Jr \cite{delellisEulerEquationsDifferential2007}.  A sequence of results, e.g. in
\cite{contiHPrincipleRigidityAlpha2009, DaneriS, BDLIS, IsettOh, isettProofOnsagerConjecture2018, buckmasterOnsagerConjectureAdmissible2017, buckmasterNonuniquenessWeakSolutions2018,buckmasterConvexIntegrationPhenomenologies2019}, and the references cited in these works, developed 
these ideas to tackle Onsager's conjecture.  In \cite{isettProofOnsagerConjecture2018}, Isett 
reached the conjectured threshold of $\frac{1}{3}-$ for the three-dimensional Euler equation 
on the torus, using Mikado flows and a delicate gluing technique.  Further developments include
Buckmaster--De Lellis--Sz\'ekelyhidi, Jr.--Vicol \cite{buckmasterOnsagerConjectureAdmissible2017}, which forms the main basis
for this work; we will refer to the strategy in \cite{buckmasterOnsagerConjectureAdmissible2017} as the {\it Onsager scheme}. The
scheme produces a weak solution that can attain any arbitrary energy
profile (this is sometimes referred to as {\it energy profile control}).

After this recent progress, the main techniques of convex integration have also been
used to construct various kinds of ``wild'' solutions (nonunique,
or failing to conserve energy) for the Euler equations, the Navier-Stokes
equations, as well as the fractional Navier-Stokes equations \cite{buckmasterNonuniquenessWeakSolutions2018,colomboIllposednessLeraySolutions2018,derosaInfinitelyManyLeray2019,cheskidovSharpNonuniquenessNavierStokes2020}.
For the Navier-Stokes equations, the dissipation term $(-\Delta)v$
can dominate the nonlinear term $\divop\left(v\otimes v\right)$,
and this presents a difficult obstruction to convex integration. At present,
this issue can be avoided by either using spatial intermittency (at
the cost of non-uniform control on the solution) or considering the
fractional Laplacian $(-\Delta)^{\gamma}$ instead. For an explanation
of intermittency, as well as more history and references, we refer
the interested readers to \cite{buckmasterConvexIntegrationPhenomenologies2019}.

One direction of research has looked into the construction
of wild solutions with {\it epochs of regularity} (that is, solutions that 
are smooth almost everywhere outside a temporal set of small dimension);
this was carried out for the hyperdissipative 
Navier-Stokes equations (using intermittency) in 
\cite{buckmasterWildSolutionsNavierStokes2020}, the Navier-Stokes
equations (using intermittency) in \cite{cheskidovSharpNonuniquenessNavierStokes2020},
and then for the Euler equations (not using intermittency) in \cite{derosaDimensionSingularSet2021}. 

We note that this goal stands in contradiction to the desire to have
energy profile control, since whenever the solution is smooth the energy
cannot increase.  These approaches make use of the Onsager scheme, with 
several refinements to the gluing approach of Isett \cite{isettProofOnsagerConjecture2018}, combined with estimates
on the overlapping (glued) regions.  Because energy correction is no longer 
required, the scheme is also simplified. 

In this paper, we look at the case of the hypodissipative Navier-Stokes
equations without using spatial intermittency, and try to determine
for which values of $\gamma$ in $(-\Delta)^{\gamma}$ one can construct
spatially Hölder-continuous solutions with epochs of regularity. In
addition, we also extend the arguments involving the Biot-Savart operator
and vector calculus (cf. the treatment in \cite{buckmasterOnsagerConjectureAdmissible2017})
to higher dimensions.

We now state our main theorem.
\begin{thm}
\label{thm:thm1}Fix $d\geq3$. Let $V_{1}$ and $V_{2}$ be smooth
solutions to (\ref{eq:hypo_NS}) such that $\int_{\mathbb{T}^{d}}\left(V_{1}-V_{2}\right)(t)=0$ for all $t$. 

For every positive $\beta,\gamma$ such that $\beta<\frac{1}{3}$
and $\beta+2\gamma<1$, there exist 
\[
\begin{cases}
v\in C_{t}^{0}C_{x}^{\beta-}\cap L_{t}^{1}C_{x}^{\beta_{1}}\\
\mathcal{B}\subset[0,T]\text{ closed}
\end{cases}
\]
where
\begin{itemize}
\item $v$ is a nonunique weak solution to (\ref{eq:hypo_NS}) given initial
data $V_{1}\left(0\right)$.
\item $v$ agrees with $V_{1}$ near $t=0$, and agrees with $V_{2}$ near
$t=T$
\item $\beta_{1}=\left(\frac{1-\beta}{2}\right)^{-}$, $\dim_{\mathrm{Hausdorff}}(\mathcal{B})\leq\left(\frac{1+\beta}{2(1-\beta)}\right)^{+}$ 
\item $v\big|_{\mathcal{B}^{c}\times\mathbb{T}^{d}}$ is smooth.
\end{itemize}
\end{thm}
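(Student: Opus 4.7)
The plan is to run an iterative convex-integration scheme of Onsager type on the hypodissipative Navier--Stokes-Reynolds system
\begin{equation*}
\partial_t v_q + (-\Delta)^\gamma v_q + \divop(v_q\otimes v_q) + \nabla p_q = \divop R_q, \qquad \divop v_q = 0,
\end{equation*}
producing a sequence $(v_q, p_q, R_q)$ indexed by $q\in\mathbb{N}$ with frequencies $\lambda_q$ growing super-exponentially and Reynolds stresses $R_q$ decaying as $\lambda_q^{-3\beta}$-ish, so that $v_q \to v$ in $C^0_t C^{\beta-}_x$ and $R_q\to 0$. To build in the epochs of regularity, I would follow the gluing philosophy of Isett as refined in \cite{buckmasterOnsagerConjectureAdmissible2017} and (more directly) in \cite{derosaDimensionSingularSet2021,cheskidovSharpNonuniquenessNavierStokes2020,buckmasterWildSolutionsNavierStokes2020}: fix a time scale $\tau_q$ and a grid $\{t_i\}_{i}\subset [0,T]$ with spacing $\tau_q$, solve the \emph{exact} hypodissipative system \eqref{eq:hypo_NS} locally on each interval $[t_i,t_i+\tau_q]$ with initial datum $v_q(t_i,\cdot)$ to obtain classical solutions $v_i$, then glue them via temporal cutoffs supported in shrinking transition windows of width $\theta_q\ll\tau_q$ centered on the $t_i$'s. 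The glued field $\bar v_q$ is then smooth and solves \eqref{eq:hypo_NS} exactly on the ``good'' set $\bigcup_i(t_i+c\theta_q, t_{i+1}-c\theta_q)$, while the new Reynolds stress $\bar R_q$ is supported in the thin bad set $B_q := \bigcup_i (t_i-\theta_q, t_i+\theta_q)$.

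On the bad set I would carry out a standard Mikado-type perturbation $w_{q+1}$ at spatial frequency $\lambda_{q+1}$, supported in $B_q$, so that $v_{q+1}=\bar v_q + w_{q+1}$ inherits the singular support. Defining $\mathcal{B} := \bigcap_{Q}\overline{\bigcup_{q\geq Q} B_q}$ produces a closed set outside of which $v$ is the locally uniform limit of smooth fields satisfying the exact PDE on uniform neighborhoods, hence smooth by parabolic regularity for the hypodissipative equation. The boundary matching in time with $V_1$ near $t=0$ and $V_2$ near $t=T$ is handled by choosing cutoffs that freeze the iteration near the endpoints, exactly as in the analogous construction for Euler in \cite{derosaDimensionSingularSet2021}. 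The non-uniqueness follows by running the scheme with two distinct choices of the initial Reynolds stress supported away from $t=0$.

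The parameter bookkeeping is where the two numerical thresholds emerge. Writing $\tau_q = \lambda_q^{-2\alpha}$ and $\theta_q=\lambda_q^{-(2\alpha+\sigma)}$ for carefully chosen $\alpha,\sigma>0$, the Hausdorff dimension of $\mathcal{B}$ is bounded by $\liminf_q \log(\tau_q^{-1})/\log(\theta_q^{-1})$, which after optimization against the convex-integration constraints yields the stated $\big(\frac{1+\beta}{2(1-\beta)}\big)^+$. The improved Hölder norm $\beta_1 = \big(\frac{1-\beta}{2}\big)^-$ in $L^1_t$ comes from the fact that on the good set the field coincides with the smooth glued solutions, whose higher derivatives grow only polynomially in $\lambda_q$, while the bad set has Lebesgue measure $|B_q|\lesssim \theta_q/\tau_q\to 0$; interpolating between the global $C^{\beta-}$ bound and the sharper $C^{(1-\beta)/2-}$ bound on the good set, then integrating in time against the shrinking bad set, produces the $L^1_tC^{\beta_1}_x$ control.

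The main obstacle, and the place where the hypothesis $\beta+2\gamma<1$ enters, is the interaction of the fractional dissipation $(-\Delta)^\gamma$ with both the local existence/gluing step and the quadratic commutator estimates that drive the iteration. On time scales $\tau_q\sim \lambda_q^{-2\alpha}$, one needs the hypodissipative evolution to keep the $C^N_x$-norms of the $v_i$'s bounded in terms of $\lambda_q$ with the correct powers; this forces $\tau_q\ll \lambda_q^{-2\gamma}$ (otherwise the fractional heat kernel would over-smooth the perturbation and destroy the spatial frequency separation), and a parallel constraint on the Reynolds stress decay rate in terms of $\beta$. Balancing these two inequalities is exactly what yields $\beta+2\gamma<1$, mirroring how the corresponding Euler proof has no such restriction and the Navier--Stokes proof with $\gamma=1$ requires intermittency. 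The remaining ingredient is the extension of the Biot--Savart / vector-calculus identities of \cite{buckmasterOnsagerConjectureAdmissible2017} from $d=3$ to arbitrary $d\geq 3$, which I would handle in Lagrangian coordinates along the flows of the glued field $\bar v_q$, replacing the three-dimensional curl identities by the antisymmetric-tensor version of the Biot--Savart operator and absorbing the transport error using material derivative estimates along the characteristics of $\bar v_q$.
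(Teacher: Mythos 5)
Your proposal follows essentially the same route as the paper: an Onsager-type iteration with an Isett-style gluing step using short overlaps, Mikado perturbations supported in the thin transition windows, Lagrangian coordinates to replace the three-dimensional curl identities, and a box-counting argument for the singular set. Two ingredients the paper needs and you omit are (i) a \emph{mollification} step before gluing, which is what makes the commutator estimate $\|R_\ell\|_{N+\alpha}\lesssim\epsilon_q\delta_{q+1}\ell^{-N+\alpha}$ available and hence keeps the local-existence time $\tau_q$ compatible with $\|v_\ell\|_{1+\alpha}^{-1}$, and (ii) a \emph{time rescaling} $v_0^\zeta(t,x)=\zeta v_0(\zeta t,x)$ that shrinks the arbitrary smooth data $V_1,V_2$ until the base-case inductive bounds (\ref{eq:vq_sup})--(\ref{eq:Rq_sup}) hold; without something like this you cannot start the iteration from a given pair $V_1,V_2$ with the constants independent of $T$.

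Two of your quantitative claims are off in ways that would not produce the stated bounds. First, the box-counting formula $\liminf_q \log(\tau_q^{-1})/\log(\theta_q^{-1})$ forgets that at stage $q$ the bad set does \emph{not} consist of $\tau_q^{-1}$ intervals: previous stages have already shrunk the bad set, so the count is $\sim\tau_q^{-1}\prod_{i<q}\epsilon_i$. Dropping the accumulated factor $\prod_{i<q}\epsilon_i$ gives the wrong exponent; the correct computation (as in the paper) is
\[
\dim_{\mathrm{box}}(\mathcal{B})\le\lim_{q\to\infty}\frac{\ln\left(\tau_q^{-1}\prod_{i<q}\epsilon_i\right)}{\ln\left(\epsilon_q^{-1}\tau_q^{-1}\right)},
\]
and it is precisely this factor that makes the limit smaller than $1$. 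Second, your heuristic for $\beta+2\gamma<1$ (``the fractional heat kernel would over-smooth the perturbation and destroy the spatial frequency separation'') is not the actual obstruction. Dissipation does not harm the frequency localization of the Mikado perturbation; the real constraints are that the error terms carrying $(-\Delta)^\gamma$ must be absorbed into the allotted stress budget. Concretely, in the gluing step one needs $\tau_q\ell_q^{-\alpha-2\gamma}\lesssim1$ (so that $\|(-\Delta)^\gamma(v_j-v_\ell)\|$ is dominated by the transport term), and in the perturbation step the dissipative stress $R_{\mathrm{dis}}=\nu\mathcal{R}(-\Delta)^\gamma w_{q+1}$ must satisfy $\delta_{q+1}^{1/2}\lambda_{q+1}^{-1+2\gamma+}\lesssim\epsilon_{q+1}\delta_{q+2}\lambda_{q+1}^{-}$; both reduce to $\beta+2\gamma<1$ after $b\to1,\alpha\to0,\sigma\to0$. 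Relatedly, the $L^1_tC^{\beta_1}_x$ conclusion is not obtained by ``interpolating a uniform $C^{(1-\beta)/2-}$ bound on the good set against the global $C^{\beta-}$ bound'' --- the $C^1$ norm of $v=v_q$ on $\mathcal{G}_q$ grows like $\delta_{q-1}^{1/2}\lambda_{q-1}\to\infty$, so there is no uniform higher-Hölder bound on the good set. The clean mechanism is a telescoping sum: $v_{q+1}-v_q$ is supported in time in $\mathcal{B}_q$, so
\[
\|v_{q+1}-v_q\|_{L^1_tC^{\beta_1}_x}\lesssim \delta_{q+1}^{1/2}\lambda_{q+1}^{\beta_1}\prod_{i\le q}\epsilon_i,
\]
and summability holds iff $\beta_1<\beta+\frac{\sigma}{b-1}$, which tends to $\frac{1-\beta}{2}$ at the optimal parameter choice. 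These corrections do not change the overall strategy, which you have right, but they are exactly where the claimed numerical thresholds come from.
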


In particular, Theorem \ref{thm:thm1} implies that, with what we currently know about the Onsager
scheme, the best fractional Laplacian we can handle (using only temporal
intermittency) is $(-\Delta)^{\frac{1}{2}-}$, which is quite a distance
away from the full Navier-Stokes equation. This confirms the heuristic that
without spatial intermittency, we want the dissipation term $(-\Delta)^{\gamma}v$
to be dominated by the nonlinear term $\divop\left(v\otimes v\right)$.
In addition, because $L_{t}^{\infty}C_{x}^{\beta}$ is supercritical
for the $\gamma$-hypodissipative Navier-Stokes equations when $\beta+2\gamma<1$,
we expect that this constraint is sharp.

The proof of Theorem \ref{thm:thm1} makes use of the strategy of the Onsager scheme, and in
particular follows from an iterative proposition based on the local existence theory, combined
with a modification of Isett's gluing technique to preserve the ``good'' temporal regions. 
The main difficulty is to optimize the length of the overlapping regions (where the cutoff 
functions meet). The iterative proposition is presented in Section \ref{sec:prelim}, where it is shown to imply Theorem \ref{thm:thm1}. The proof of the iterative proposition itself is deferred to Section \ref{sec:Proof-of-iteration}, where, after
a brief mollification argument, we reduce the issue to a series of technical estimates (first, a 
collection of estimates for the gluing construction, which we then treat in Section 
\ref{sec:Proof-of-gluing}; and then a perturbation result arising from convex integration, 
which we treat in Section \ref{sec:Convex-integration-and}).

\begin{rem}
As usual (see, e.g. \cite{derosaInfinitelyManyLeray2019,buckmasterOnsagerConjectureAdmissible2017}),
any $C_{t}^{0}C_{x}^{\alpha}$ solution with $\alpha\in\left(0,\frac{1}{3}\right)$
is automatically a $C_{t,x}^{\alpha-}$ solution. For any given $\beta\in\left(0,\frac{1}{3}\right)$,
we can construct a wild $v\in C_{t,x}^{\beta}$. 

For $\gamma<\frac{1}{3}$ and $\varepsilon\in\left(0,\frac{1}{3}\right)$,
by interpolation, this leads to the construction of wild solutions
in $C_{t}^{0}C_{x}^{\left(\frac{1}{3}-\varepsilon\right)^{-}}\cap L_{t}^{1}C_{x}^{\left(\frac{1}{3}+\frac{\varepsilon}{2}\right)^{-}}\cap L_{t}^{\frac{3}{2}-}C_{x}^{\frac{1}{3}}$,
with the singular set having dimension less than $1$, and to construction
of wild solutions in $C_{t}C_{x}^{0+}\cap L_{t}^{1}C_{x}^{\frac{1}{2}-}$,
with the dimension of the singular set bounded by $\frac{1}{2}+$. 

On the other hand, in the range $\frac{1}{3}\leq\gamma<\frac{1}{2},$
for each $\beta<1-2\gamma$, the dimension of the singular set is
bounded by 
\[
\left(\frac{1+\beta}{2(1-\beta)}\right)^{+}<\frac{1-\gamma}{2\gamma}.
\]

\end{rem}

\begin{figure}[h]
\centering{}\includegraphics[scale=0.4]{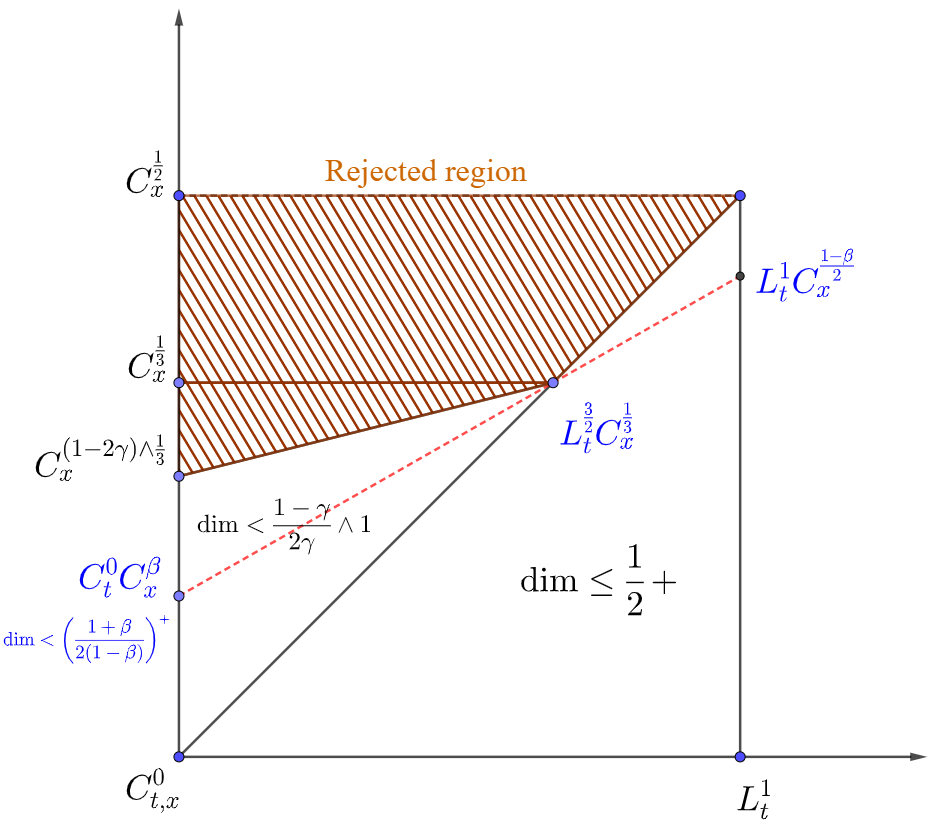}\caption{Regularity and dimension parameters. Given any $p\in\left(1,\infty\right)$
and $\mathcal{\widetilde{\beta}}$, we identify $L_{t}^{p}C_{x}^{\mathcal{\widetilde{\beta}}}$
as the point $\left(\frac{1}{p},\widetilde{\beta}\right)$. Assuming
it is away from the rejected region, by simple geometry, we can construct
a wild solution in $L_{t}^{p-}C_{x}^{\mathcal{\widetilde{\beta}}}$
or $L_{t}^{p}C_{x}^{\mathcal{\widetilde{\beta}}-}$ by constructing
one in $C_{t}^{0}C_{x}^{\beta-}\cap L_{t}^{1}C_{x}^{\left(\frac{1-\beta}{2}\right)-}$
where $\widetilde{\beta}\protect\leq\frac{1}{2p}+\left(1-\frac{3}{2p}\right)\beta$,
and get the corresponding $\mathrm{dim}_{\mathrm{Hausdorff}}\left(\mathcal{B}\right)$.
If $p=\frac{3}{2}$, we can arbitrarily choose $\beta<\left(1-2\gamma\right)\wedge\frac{1}{3}$. }
\end{figure}

\subsection*{Further comments and open questions}

The arguments we use to prove Theorem \ref{thm:thm1} immediately lead to an analogous
result for the Euler equations, since we treated $(-\Delta)^{\gamma}v$ as an 
error term.  In particular, in the proof of Theorem \ref{thm:thm1}, we show 
nonuniqueness for $C_{t}^{0}C_{x}^{\frac{1}{3}-}\cap L_{t}^{\frac{3}{2}-}C_{x}^{\frac{1}{3}}$
solutions. In the Euler context, this can be compared to the nonuniqueness of $L_{t}^{\frac{3}{2}-}C_{x}^{\frac{1}{3}}$
solutions in \cite[Theorem 1.10]{cheskidovSharpNonuniquenessNavierStokes2020}.  In
\cite{cheskidovSharpNonuniquenessNavierStokes2020}, rather than using the
Onsager scheme, the authors use spatial intermittency. As a consequence, the
solution they construct is not spacetime continuous; their singular set
$\mathcal{B}$ can have arbitrarily small Hausdorff dimension, and their scheme
also works in two dimensions.

Two open questions remain.  The first is to ask if can we further minimize the dimension of
the singular set $\mathcal{B}$, as suggested in \cite{derosaDimensionSingularSet2021}.
The second question of interest is to determine whether the construction can be adapted to 
construct solutions that obey some form of energy inequality.
Both questions lead to natural problems that we hope to consider in future works.

\subsection*{Outline of the Paper} In Section \ref{sec:prelim}, we specify our notational conventions
and introduce the main iterative scheme underlying the proof of Theorem \ref{thm:thm1}.  The iterative
step is formulated in Proposition \ref{prop:iterative}, which is then used to prove Theorem 
\ref{thm:thm1}.  The proof of Proposition \ref{prop:iterative} is the subject of Section \ref{sec:Proof-of-iteration}.  The proof
is reduced to two technical lemmas (a collection of gluing estimates, and a perturbation argument) which are treated in
Section \ref{sec:Proof-of-gluing} and Section \ref{sec:Convex-integration-and}, respectively.  A short appendix recalls several geometric preliminaries
used throughout the paper.

\subsection*{Acknowledgements}

The second author acknowledges that this material is based upon work
supported by a grant from the Institute for Advanced Study. The third
author acknowledges partial support from NSF grant DMS-1764034. The
authors also thank Camillo De Lellis and Alexey Cheskidov
for valuable discussions.

\section{\label{sec:prelim}Preliminaries and the iteration scheme}

We begin by establishing some notational conventions.  We will write $A\lesssim_{x,\neg y}B$ for $A\leq CB$, where $C$
is a positive constant depending on $x$ and not $y$. Similarly,
$A\sim_{x,\neg y}B$ means $A\lesssim_{x,\neg y}B$ and $B\lesssim_{x,\neg y}A$.
We will omit the explicit dependence when it is either not essential
or obvious by context.

For any real number $x$, we write $x+$ or $x^{+}$ to denote some
$y\in\left(x,x+\varepsilon\right)$ where $\varepsilon$ is some arbitrarily
small constant. Similarly we write $x-$ or $x^{-}$ for some $y\in\left(x-\varepsilon,x\right)$.

For any $N\in\mathbb{N}_{0}$ and $\alpha\in\left(0,1\right)$, we
write 
\begin{align*}
\left\Vert f\right\Vert _{N} & =\left\Vert f\right\Vert _{C^{N}},\quad\left[f\right]_{N}=\left\Vert \nabla^{N}f\right\Vert _{0},\quad\left[f\right]_{N+\alpha}=\left[\nabla^{N}f\right]_{C^{0,\alpha}},
\end{align*}
and
\begin{align*}
\left\Vert f\right\Vert _{N+\alpha} & =\left\Vert f\right\Vert _{C^{N,\alpha}}:=\left\Vert f\right\Vert _{N}+\left[f\right]_{N+\alpha},
\end{align*}
where $\left[\,\cdot\,\right]_{C^{0,\alpha}}$ denotes the H\"older seminorm.  We will often make use of the following elementary
inequality,
\[
\left\Vert fg\right\Vert _{r}\lesssim\left\Vert f\right\Vert _{0}\left[g\right]_{r}+\left[f\right]_{r}\left\Vert g\right\Vert _{0},
\]
which holds for any $r>0$.

\begin{defn}
For any $T>0$, $\nu>0$, vector field $v$ and $(2,0)$-tensor $R$ on
$[0,T]\times\mathbb{T}^{d}$, we say $(v,R)$ solves the $(\nu,\gamma,T)$-fNSR
equations (fractional Navier-Stokes-Reynolds) if there is a smooth pressure $p$ such that 
\begin{equation}
\begin{cases}
\dd_{t}v+\nu(-\Delta)^{\gamma}v+\divop v\otimes v+\grad p=\divop R\\
\divop v=0,
\end{cases}\label{eq:hypo_NSR}
\end{equation}
When $R=0$, we also say $v$ solves the $(\nu,\gamma,T)$-fNS equations.
\end{defn}

\subsection{Formulation of the iterative argument}

As we described in the introduction, the proof of Theorem \ref{thm:thm1} is based on an iterative argument.  We now outline the main
setup of the iteration, and establish notation that will be used throughout the remainder of the paper.  We begin by fixing $\gamma\in (0,1)$ and
$\beta<\frac{1}{3}$ with $\beta+2\gamma<1$. 

For any natural number $q\in\mathbb{N}_{0}$, we set 
\begin{align}
\lambda_{q} & :=\left\lceil a^{(b^{q})}\right\rceil \\
\delta_{q} & :=\lambda_{q}^{-2\beta}
\end{align}
 with $a\gg1$, $0<b-1\ll1$ (to be chosen later). We remark that $\lambda_{q}$
will be the frequency parameter (made an integer for phase functions),
while $\delta_{q}$ will be the pointwise size of the Reynolds stress.

With $\alpha>0$ sufficiently small, and $\sigma>0$ (to be chosen
later), we set 
\begin{align}
\epsilon_{q} & :=\lambda_{q}^{-\sigma}\quad\\
\tau_{q} & :=C_{q}\delta_{q}^{-\frac{1}{2}}\lambda_{q}^{-1-3\alpha}\label{eq:tau_q}
\end{align}
where $C_{q}\sim1$ is an inessential constant such that $\epsilon_{q-1}\tau_{q-1}\tau_{q}^{-1}\in\mathbb{N}_{1}$
(for gluing purposes). For convenience, from this point on, we will
not write out $C_{q}$ explicitly.  The parameter $\tau_{q}$ will be the time of
local existence for regular solutions, while the quantity $\epsilon_{q}\tau_{q}$
will be the length of the overlapping region between two temporal
cutoffs.  

We now formulate the main inductive hypothesis on which the construction is based. Let $T\geq 1$ and $\nu \in (0,1]$ be arbitrary constants. For the first step of the 
induction, we pick any 
positive $\epsilon_{-1},\tau_{-1}$ such that $5\epsilon_{-1}\tau_{-1}=\frac{T}{3}$.  

For every $q\in\mathbb{N}_{0}$, we 
assume that there exist $v_{q}$ and $R_{q}$ smooth such that,
\begin{enumerate}
\item[(i)] $(v_q,R_q)$ solves the $(\nu,\gamma,T)$-fNSR equations in (\ref{eq:hypo_NSR}),
\item[(ii)] we have the estimates 
\begin{align}
\|v_{q}\|_{L^{\infty}} & \leq1-\delta_{q}^{1/2},\label{eq:vq_sup}\\
\|\grad v_{q}\|_{L^{\infty}} & \leq M\delta_{q}^{1/2}\lambda_{q},\label{eq:grad_vq_sup}\\
\|R_{q}\|_{L^{\infty}} & \leq\epsilon_{q}\delta_{q+1}\lambda_{q}^{-3\alpha},\label{eq:Rq_sup}
\end{align}
where $M$ is a universal geometric constant (depending on $d$), and
\item[(iii)] letting $\mathcal{B}_{q}=\bigcup_{i}I_{i}^{b,q}$ denote the current ``bad''
set consisting of disjoint closed intervals of length $5\epsilon_{q-1}\tau_{q-1}$,
and letting $$\mathcal{G}_{q}=[0,T]\setminus\mathcal{B}_{q}=\bigcup_{i}I_{i}^{g,q}$$
denote the current ``good'' set consisting of disjoint open intervals,
we have 
\begin{equation}
R_{q}|_{\mathcal{G}_{q}+B(0,\epsilon_{q-1}\tau_{q-1})}\equiv0,\label{eq:Rq_zerowhere}
\end{equation}
where $\mathcal{G}_{q}+B(0,\epsilon_{q-1}\tau_{q-1})$ denotes the
$\epsilon_{q-1}\tau_{q-1}$-neighborhood of $\mathcal{G}_{q}$, and within
this neighborhood we have the improved bounds 
\begin{align}
\|v_{q}\|_{N+1} & \lesssim_{N,\neg q}\delta_{q-1}^{1/2}\lambda_{q-1}\ell_{q-1}^{-N},\quad\textrm{for all }N\geq0.\label{eq:goodbounds}
\end{align}
\end{enumerate}

We note the presence of $\epsilon_{q}$ in (\ref{eq:Rq_sup}), which serves to compensate
for the sharp time cutoffs in our gluing construction. 

The main iterative proposition is given by the following statement.

\begin{prop}[Iteration for the $(\nu,\gamma,T)$-fNSR equations]
\label{prop:iterative}
We fix 
\begin{gather}
0<b-1  \ll_{\beta,\gamma}1,\label{eq:smallness_b}\\
0<\sigma  <\frac{(b-1)(1-\beta-2b\beta)}{b+1},\label{eq:sigma_bound}\\
0<\alpha  \ll_{\sigma,b,\beta,\gamma}1,\label{eq:alpha_bound}\\
a  \gg_{\alpha,\sigma,b,\beta,\gamma}1,\nonumber 
\end{gather}
and suppose that $v_{q}$ and $R_{q}$ are smooth functions which satisfy the properties (i)--(iii) above.  Then there
exist $v_{q+1}$ and $R_{q+1}$ satisfying those same properties but
with $q$ replaced by $q+1$. Moreover, we have 
\begin{align}
\|v_{q}-v_{q+1}\|_{0}+\lambda{}_{q+1}^{-1}\|v_{q}-v_{q+1}\|_{1} & \leq M\delta_{q+1}^{1/2}\label{eq:iter_prop_est}
\end{align}
and $v_{q+1}=v_{q}$ on $\mathcal{G}_{q}\times\mathbb{T}^{d}$, $\mathcal{G}_{q}\subset\mathcal{G}_{q+1},\left|\mathcal{B}_{q+1}\right|\leq\epsilon_{q}\left|\mathcal{B}_{q}\right|$.
\end{prop}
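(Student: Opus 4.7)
The plan is to run the standard three-stage Onsager scheme --- mollification, gluing, and convex integration perturbation --- while tracking which parts of $[0,T]$ are touched, so that the construction is trivial on $\mathcal{G}_{q}$ and the bad set shrinks at each step. First I would mollify $(v_{q},R_{q})$ in space and time at a scale $\ell_{q}\sim\lambda_{q}^{-1}\lambda_{q+1}^{-\alpha}$, producing $(v_{\ell},R_{\ell})$ solving a mollified fNSR system modulo a Constantin--E--Titi type commutator. Because the mollifier is local and $v_{q}$ is already as smooth as (\ref{eq:goodbounds}) on the $\epsilon_{q-1}\tau_{q-1}$-neighborhood of $\mathcal{G}_{q}$ guaranteed by (\ref{eq:Rq_zerowhere}), we have $v_{\ell}=v_{q}$ and the commutator vanishes there. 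Then I partition $[0,T]$ into subintervals $I_{i}$ of length $\tau_{q}$ satisfying the divisibility condition in (\ref{eq:tau_q}), solve the $(\nu,\gamma,T)$-fNS equations with initial data $v_{\ell}(t_{i})$ on each $I_{i}$ by local existence (whose time of existence exceeds $\tau_{q}$ thanks to (\ref{eq:grad_vq_sup}) and the calibration of $\tau_q$), and with a partition of unity $\{\chi_{i}\}$ having overlaps of length $\epsilon_{q}\tau_{q}$ set
\begin{equation*}
\bar v_{q}:=\sum_{i}\chi_{i}v_{i}.
\end{equation*}
The Reynolds stress $\bar R_{q}$ is supported where $\chi_{i}'\neq 0$ and can be written, via an inverse-divergence operator (the higher-dimensional generalization promised in the introduction), in terms of $(v_{i}-v_{j})\otimes(v_{i}-v_{j})$, whose size comes from stability of the local Cauchy problem over time $\epsilon_{q}\tau_{q}$. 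Uniqueness of smooth solutions forces $v_{i}=v_{q}$ on the portion of $\mathcal{G}_{q}$ well-separated from $\mathcal{B}_{q}$, so $\bar v_{q}=v_{q}$ and $\bar R_{q}\equiv 0$ there; the quantitative gluing estimates will be proved in Section \ref{sec:Proof-of-gluing}.

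For the convex integration step, use the geometric lemma to decompose $\bar R_{q}=\sum_{\xi}a_{\xi}^{2}\,\xi\otimes\xi$ and form a Mikado-type principal perturbation
\begin{equation*}
w^{(p)}_{q+1}=\sum_{\xi}a_{\xi}(t,x)\,W_{\xi}(\lambda_{q+1}x)
\end{equation*}
plus an incompressibility corrector $w^{(c)}_{q+1}$, setting $v_{q+1}:=\bar v_{q}+w^{(p)}_{q+1}+w^{(c)}_{q+1}$. The inverse divergence of the oscillation, transport, Nash, corrector, mollification, and fractional dissipation errors defines $R_{q+1}$; each must be $\leq\epsilon_{q+1}\delta_{q+2}\lambda_{q+1}^{-3\alpha}$, which is the subject of Section \ref{sec:Convex-integration-and}. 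Since $w_{q+1}$ is supported where $\bar R_{q}$ is nonzero, $v_{q+1}=v_{q}$ on $\mathcal{G}_{q}$ and (\ref{eq:Rq_zerowhere}) holds at stage $q+1$ with $\mathcal{G}_{q+1}\supset\mathcal{G}_{q}$; the new bad set $\mathcal{B}_{q+1}$ is contained in $\epsilon_{q}\tau_{q}$-neighborhoods of the endpoints of the components of $\mathcal{B}_{q}$, giving $|\mathcal{B}_{q+1}|\leq\epsilon_{q}|\mathcal{B}_{q}|$. The bounds (\ref{eq:vq_sup})--(\ref{eq:Rq_sup}) and (\ref{eq:iter_prop_est}) then follow from $\|w_{q+1}\|_{0}\leq M\delta_{q+1}^{1/2}$ (geometric lemma) and $\|w_{q+1}\|_{1}\lesssim\delta_{q+1}^{1/2}\lambda_{q+1}$.

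The main obstacle will be absorbing the fractional dissipation error $\nu(-\Delta)^{\gamma}w_{q+1}$: schematically it contributes a term of size $\sim\lambda_{q+1}^{2\gamma}\delta_{q+1}^{1/2}$ after inverse divergence, which must be compared to $\epsilon_{q+1}\delta_{q+2}\lambda_{q+1}^{-3\alpha}\approx\lambda_{q+1}^{-2b\beta-\sigma/b-3\alpha}$, and matching exponents yields precisely $\beta+2\gamma<1$ with the slack absorbed by (\ref{eq:smallness_b})--(\ref{eq:alpha_bound}). This is why that hypothesis appears in the theorem and is expected to be sharp for schemes without spatial intermittency. A secondary delicate balance is the choice of $\tau_{q}$ in (\ref{eq:tau_q}): short enough to guarantee local existence and to make the transport estimate $\|(\partial_{t}+\bar v_{q}\cdot\nabla)a_{\xi}\|\lesssim\tau_{q}^{-1}\|a_{\xi}\|$ control the oscillation error, yet with $\epsilon_{q}=\lambda_{q}^{-\sigma}$ tuned so that the dangerous $(\epsilon_{q}\tau_{q})^{-1}$ factor arising from the sharp cutoff $\chi_{i}$ can be paid for; this is exactly what the $\epsilon_{q}$ factor in (\ref{eq:Rq_sup}) is designed to accommodate.
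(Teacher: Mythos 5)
The key step where your argument breaks down is the claim that ``because the mollifier is local and $v_{q}$ is already as smooth as (\ref{eq:goodbounds}) on the $\epsilon_{q-1}\tau_{q-1}$-neighborhood of $\mathcal{G}_{q}$, we have $v_{\ell}=v_{q}$ and the commutator vanishes there.'' Smoothness of $v_{q}$ does not make spatial mollification the identity: $\psi_{\ell}*v_{q}\neq v_{q}$ for a generic smooth function, no matter how regular it is. Consequently, with your uniform gluing $\bar v_{q}=\sum_{i}\chi_{i}v_{i}$ and initial data $v_{i}(t_{i})=v_{\ell}(t_{i})$, uniqueness of the local Cauchy problem only forces $v_{i}$ to match the solution issuing from $v_{\ell}(t_{i})$ --- which is \emph{not} $v_{q}$ --- so you end up with $\bar v_{q}\neq v_{q}$ on $\mathcal{G}_{q}$. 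Since the proposition explicitly requires $v_{q+1}=v_{q}$ on $\mathcal{G}_{q}\times\mathbb{T}^{d}$ (and this is the whole point of the epochs-of-regularity construction: the solution must literally \emph{not change} on the already-good set so that its smoothness there persists to $q\to\infty$), this is a genuine gap, not a detail.

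The paper avoids this by not using a uniform partition of unity. It uses two families of cutoffs $\{\chi_{i}^{g}\}$ and $\{\chi_{j}^{b}\}$ and sets
\begin{equation*}
\overline{v}_{q}=\sum_{i}\chi_{i}^{g}\,v_{q}+\sum_{j\in\mathcal{J}^{*}}\chi_{j}^{b}\,v_{j},
\end{equation*}
i.e.\ on good intervals it keeps the \emph{unmollified} $v_{q}$ (an exact solution there since $R_{q}\equiv0$ by (\ref{eq:Rq_zerowhere})), and only substitutes the locally solved $v_{j}$ on bad intervals. This produces the desired identity $\overline{v}_{q}=v_{q}$ on $\mathcal{G}_{q}$ by construction, at the cost of having to estimate the mismatch $v_{q}-v_{\ell}$ (not just $v_{j}-v_{\ell}$) on the good-bad overlap; this requires the improved bounds (\ref{eq:goodbounds}) together with the parameter inequality (\ref{eq:good_bad1}), which is why those are built into the inductive hypothesis and why the mollification scale is the specific $\ell_{q}=\delta_{q+1}^{1/2}\lambda_{q}^{-1-\sigma/2-3\alpha/2}\delta_{q}^{-1/2}$ of (\ref{eq:length_scale}) rather than the $\lambda_{q}^{-1}\lambda_{q+1}^{-\alpha}$ you wrote. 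Your proposal also mollifies in time, whereas the paper mollifies only in space; time mollification would smear the bad set into the good set and again destroy $v_{q+1}=v_{q}$ on $\mathcal{G}_{q}$. The remainder of your outline (local existence, Mikado-flow perturbation plus incompressibility corrector, estimation of oscillation/transport/Nash/dissipation errors, the heuristic that the fractional dissipation forces $\beta+2\gamma<1$) does match the paper's Sections \ref{sec:Proof-of-gluing} and \ref{sec:Convex-integration-and}.
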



\begin{rem}
\label{rem:crucial_rem}We crucially remark that the parameters $b,\sigma,\alpha$,
and $a$ only depend on $\beta,\gamma$ and $d$. In particular, they
do not depend on $q,T$ or $\nu$ (as long as $\nu\leq1$ and $T\geq1$).
\end{rem}

The proof of \Propref{iterative} is given in \Secref{Proof-of-iteration} below. In the remainder of this section, we use this result
to prove Theorem \ref{thm:thm1}.

\begin{proof}[Proof of Theorem \ref{thm:thm1} via \Propref{iterative}]
Let $\eta$ be a smooth temporal cutoff on $[0,T]$ such that $\mathbf{1}_{[0,\frac{2}{5}T]}\geq\eta\geq\mathbf{1}_{[0,\frac{1}{3}T]}$, and set
\[
v_{0}=\eta V_{1}+\left(1-\eta\right)V_{2}.
\]

Since the dissipative terms are linear and $\int_{\mathbb{T}^{d}}\left(V_{1}-V_{2}\right)=0$,
if we set
\[
R_{0}=\partial_{t}\eta\mathcal{R}\left(V_{1}-V_{2}\right)-\eta\left(1-\eta\right)\left(V_{1}-V_{2}\right)\otimes\left(V_{1}-V_{2}\right)
\]
where $\mathcal{R}$ is the antidivergence operator defined in \Appref{Geometric-preliminaries}, then $\left(v_{0},R_{0}\right)$ solves the $(1,\gamma,T)$-fNSR equations from (\ref{eq:hypo_NSR}).

We now aim to apply \Propref{iterative}.  To do this, we rescale in time by a positive parameter $\zeta$, i.e.
\[
v_{0}^{\zeta}\left(t,x\right)=\zeta v_{0}\left(\zeta t,x\right),\quad R_{0}^{\zeta}=\zeta^{2}R_{0}\left(\zeta t,x\right)
\]
Then $(v_0^\zeta,R_0^\zeta)$ solves the $(\zeta,\gamma,\zeta^{-1}T)$-fNSR equations. 

We now recall that we are allowed to make $\zeta$ arbitrarily small because of Remark \ref{rem:crucial_rem}. For $\zeta=\zeta(T,V_{1},V_{2},a,b,\alpha,\sigma,\beta)$ small enough, the conditions
(\ref{eq:vq_sup})-(\ref{eq:Rq_sup}) of (ii) in the inductive hypothesis for \Propref{iterative} are satisfied for the case $q=0$, and we also have $$\zeta^{-1}T > 1 > \zeta.$$ In addition, (iii) is satisfied by letting $\mathcal{B}_{0}^{\zeta}=\left[\frac{T}{3\zeta},\frac{2T}{3\zeta}\right]$. 

Repeatedly applying \Propref{iterative} for the $(\zeta,\gamma,\zeta^{-1} T)$-fNSR equations, we get a sequence $\left(v_{q}^{\zeta},R_{q}^{\zeta},\mathcal{B}_{q}^{\zeta}\right)$
such that
\renewcommand{\labelenumi}{\roman{enumi}.}
\begin{enumerate}
\item[(a)] $\left(v_{q}^{\zeta}\right)_{q\in\mathbb{N}_{0}}$ converges in $C_{t}^{0}C_{x}^{\beta-}$
to some $v^{\zeta}$.
\item[(b)] $\lVert R_{q}^{\zeta}\rVert_{C_{t,x}^0}\rightarrow 0$ as $q\rightarrow\infty$, and
\item[(c)] $\mathcal{B}_{q+1}^{\zeta}\subset\mathcal{B}_{q}^{\zeta}$ and $v^{\zeta}=v_{q}^{\zeta}$
on $\mathcal{G}_{q}^{\zeta}\times\mathbb{T}^{d}$.
\end{enumerate}
\renewcommand{\labelenumi}{\arabic{enumi}.}

As a consequence of (c), $v^\zeta$ is smooth on each set $\mathcal{G}_q^\zeta \times\mathbb{T}^d$.  Moreover, $v^{\zeta}$ is a weak solution of the $(\zeta,\gamma,\zeta^{-1} T)$-fNS equations.

To conclude, we note that the transformations
\begin{align*}
v_{q}\left(t,x\right) & :=\zeta^{-1}v_{q}^{\zeta}\left(\zeta^{-1}t,x\right)\\
v\left(t,x\right) & :=\zeta^{-1}v^{\zeta}\left(\zeta^{-1}t,x\right)\\
\mathcal{B}_{q} & :=\zeta^{-1}\mathcal{B}_{q}^{\zeta}
\end{align*}
invert the time-rescaling.  The bad set is then 
\[
\mathcal{B}:=\bigcap_{q}\mathcal{B}_{q}.
\]
Moreover, noting that the choice of $V_2$ was arbitrary, the solution $v$ is nonunique.

We now verify that $\mathcal{B}$ has the desired Hausdorff dimension.  Note that the set $\mathcal{B}_{q}$ consists of $\sim\tau_{q}^{-1}\prod_{i=1}^{q-1}\epsilon_{i}$
intervals of length $\sim_{\zeta}\epsilon_{q}\tau_{q}$. It therefore
follows that 
\begin{align*}
\dim_{\mathrm{Hausdorff}}(\mathcal{B}) \leq\mathrm{dim}_{\mathrm{box}}(\mathcal{B})&\leq\lim_{q\to\infty}\frac{\ln\Big(\tau_{q}^{-1}\prod_{i=1}^{q-1}\epsilon_{i}\Big)}{\ln\left(\epsilon_{q}^{-1}\tau_{q}^{-1}\right)}\\
&=\lim_{q\to\infty}\frac{\ln(a)b^{q}\left(1+3\alpha-\beta-\frac{\sigma}{b-1}\right)}{\ln(a)b^{q}\left(1+3\alpha+\sigma-\beta\right)}\\
 & =1-\frac{\sigma b}{\left(b-1\right)\left(1+3\alpha+\sigma-\beta\right).}
\end{align*}
Choosing $\alpha$ sufficiently small, and then choosing $\sigma$ sufficiently close to $\frac{(b-1)\left(1-\beta-2b\beta\right)}{b+1}$
and $b>1$ sufficiently close to $1$, we get the bound
\[
\dim(\mathcal{B})\leq\left(\frac{1+\beta}{2(1-\beta)}\right)^{+}
\]
 as desired.

It remains to choose $\beta_{1}$ to ensure that the solution
lies in $C_{t}^{0}C_{x}^{\beta-}\cap L_{t}^{1}C_{x}^{\beta_{1}}.$ For this, note that 
since $\left|\mathcal{B}_{q+1}\right|\lesssim_{\zeta}\prod_{i=1}^{q}\epsilon_{i}$,
we have 
\[
\left\Vert v_{q+1}-v_{q}\right\Vert _{L_{t}^{1}C_{x}^{\beta_{1}}}\lesssim_{\zeta}\delta_{q+1}^{1/2}\lambda_{q+1}^{\beta_{1}}\left(\prod_{i=1}^{q}\epsilon_{i}\right)
\]
The right-hand side is then summable in $q$, provided that 
\[
-\beta+\beta_{1}-\frac{\sigma}{b-1}<0.
\]
We may therefore choose $\beta_{1}<\beta+\frac{\sigma}{b-1}<\frac{1-b\beta}{b+1}<\frac{1-\beta}{2}$, which completes the proof.
\end{proof}

\section{\label{sec:Proof-of-iteration}Proof of \Propref{iterative}}

In this section, we give the proof of the main iterative result, \Propref{iterative}, which was used to prove Theorem \ref{thm:thm1} in the
previous section.  As we described in the introduction, the argument makes use of three steps -- a mollification procedure, a gluing construction, and a
perturbation result arising from convex integration.  To simplify the exposition, we discuss each step below, and after isolating a
few technical lemmas whose proofs are deferred to \Secref{Proof-of-gluing} and \Secref{Convex-integration-and}, we give the proof of \Propref{iterative}.

We define the length scale of mollification
\begin{align}
\ell_{q} & :=\frac{\delta_{q+1}^{\frac{1}{2}}}{\lambda_{q}^{1+\frac{\sigma}{2}+\frac{3\alpha}{2}}\delta_{q}^{\frac{1}{2}}}.\label{eq:length_scale}
\end{align}
To simplify notation, we will often abbreviate $\ell_{q}$ as $\ell$ (unless otherwise indicated).

For technical convenience, we record several useful parameter inequalities.  The first set of these are essential conversions,
\begin{gather}
\epsilon_{q}^{\frac{1}{2}}\tau_{q}\delta_{q+1}^{\frac{1}{2}}\ell_{q}^{-1}  \ll1\label{eq:time-length}\\
\lambda_{q}\ll\ell_{q}^{-1}  \ll\lambda_{q+1}\ll\lambda_{q}^{\frac{3}{2}}\label{eq:length_Freq}\\
\delta_{q-1}^{\frac{1}{2}}\lambda_{q-1}\ell_{q}^{2-2\alpha}  \ll\epsilon_{q}\tau_{q}\delta_{q+1}.\label{eq:good_bad1}
\end{gather}
Indeed, the bound (\ref{eq:time-length}) comes from $\alpha>0$, while
the bound $\ell_{q}^{-1}\ll\lambda_{q+1}$ in (\ref{eq:length_Freq}) follows by recalling that
that $\alpha$ can be made arbitrarily small by (\ref{eq:alpha_bound}), so that
(\ref{eq:sigma_bound}) implies $\sigma<2\left(b-1\right)\left(1-\beta\right)$, and thus
\begin{equation}
-\beta b+\beta-1-\frac{\sigma}{2}+b>0.\label{eq:intermed_para}
\end{equation}
Similarly, by neglecting $\alpha$, (\ref{eq:good_bad1}) comes from
$\left(b-1\right)\left(1-\beta\right)>0$, which is obvious.

In order to partition the time intervals for gluing, we also need
the bound 
\begin{equation}
\epsilon_{q}\tau_{q}\ll\tau_{q}\ll\epsilon_{q-1}\tau_{q-1}\label{eq:time_interval}
\end{equation}
which comes from the inequality $\sigma<\left(1-\beta\right)(b-1)$,
a consequence of (\ref{eq:sigma_bound}). We also have the special
case 
\[
\tau_{0}\ll\frac{1}{15}\leq\frac{T}{15}=\epsilon_{-1}\tau_{-1}
\]
because $T\geq1$. This allows $a,b,\beta,\alpha$
to be independent of $T$, and we use this crucial fact in the proof of Theorem \ref{thm:thm1}. 

To control the dissipative term in the gluing construction, we will also find it useful to observe the bound
\begin{align}
\tau_{q}\ell_{q}^{-\alpha-2\gamma} & \lesssim1,\label{eq:diss_1}
\end{align}
which, since $\alpha$ is negligible by (\ref{eq:alpha_bound}), 
comes from the inequality $\sigma<\left(1-\beta\right)\left(\frac{1-2\gamma}{\gamma}\right)-2b\beta$.
Because of (\ref{eq:smallness_b}) and $\beta+2\gamma<1$, this is implied
by (\ref{eq:sigma_bound}). 

Next, to control the stress size for the induction step, we note that
\begin{align}
\epsilon_{q}^{-1}\delta_{q+1}^{\frac{1}{2}}\delta_{q}^{\frac{1}{2}}\lambda_{q+1}^{-1+10\alpha}\lambda_{q}^{1+10\alpha} & \lesssim\epsilon_{q+1}\delta_{q+2}\lambda_{q+1}^{-4\alpha}\label{eq:stress_size_ind1},
\end{align}
which, after neglecting $\alpha$, comes from
\[
-b\beta-\beta-b+1+\sigma\leq-b\sigma-b^{2}\left(2\beta\right),
\]
which is precisely (\ref{eq:sigma_bound}).

Lastly, for the dissipative error in the final stress, we observe that
\begin{align}
\delta_{q+1}^{1/2}\lambda_{q+1}^{-1+2\gamma+10\alpha} & \lesssim\epsilon_{q+1}\delta_{q+2}\lambda_{q+1}^{-4\alpha},\label{eq:stress_size_ind3}
\end{align}
which comes from 
\[
-\beta-1+2\gamma<b\left(-2\beta\right)-\sigma
\]
which in view of (\ref{eq:smallness_b}) and $\beta+2\gamma<1$, is a consequence of 
(\ref{eq:sigma_bound}).

\subsection{The mollification step}

With $\ell$ as defined in (\ref{eq:length_scale}) and $\psi_{\ell}$
a smooth standard radial mollifier in space of length $\ell$, we set
\begin{align*}
v_{\ell} & :=\psi_{\ell}*v_{q}.
\end{align*}

By standard mollification estimates and (\ref{eq:grad_vq_sup}) we
have 
\begin{align}
\left\Vert v_{\ell}-v_{q}\right\Vert _{0} & \lesssim\delta_{q}^{1/2}\lambda_{q}\ell=\epsilon_{q}^{\frac{1}{2}}\delta_{q+1}^{\frac{1}{2}}\lambda_{q}^{-\frac{3\alpha}{2}}\label{eq:lq_0}\\
\|\grad^{N}v_{\ell}\|_{L^{\infty}} & \lesssim_{N}\delta_{q}^{1/2}\lambda_{q}\ell^{-N+1}\label{eq:grad_v_l}
\end{align}
for any $N\in\mathbb{N}_{1}$. Moreover, by setting
\begin{align*}
R_{\ell} & :=\psi_{\ell}*R_{q}+v_{\ell}\otimes v_{\ell}-\psi_{\ell}*(v_{q}\otimes v_{q})
\end{align*}
the pair $(v_{\ell},R_{\ell})$ solves the $(\nu,\gamma,T)$-fNSR equations.

Moreover, by using (\ref{eq:vq_sup}), (\ref{eq:grad_vq_sup}), (\ref{eq:Rq_sup}), (\ref{eq:length_scale}), (\ref{eq:length_Freq}) and the 
usual commutator estimate
\[
\left\Vert \left(f*\psi_{l}\right)\left(g*\psi_{l}\right)-\left(fg\right)*\psi_{l}\right\Vert _{C^{r}}\lesssim_{r}l^{2-r}\left\Vert f\right\Vert _{C^{1}}\left\Vert g\right\Vert _{C^{1}}
\]
for $f,g\in C^{\infty}\left(\mathbb{T}^{d}\right)$ and $l>0,r\geq0$ (see, e.g. \cite[Proposition A.2]{buckmasterOnsagerConjectureAdmissible2017}), we obtain
\begin{align}
\|R_{\ell}\|_{N+\alpha} & \lesssim_{N}\ell^{-N-\alpha}\left\Vert R_{q}\right\Vert _{C^{0}}+\ell^{2-N-\alpha}\left\Vert v_{q}\right\Vert _{C^{1}}^{2}\nonumber \\
 & \lesssim\ell^{-N-\alpha}\delta_{q+1}\epsilon_{q}\lambda_{q}^{-3\alpha}\lesssim\epsilon_{q}\delta_{q+1}\ell^{-N+\alpha}\label{eq:Rl_estimate}
\end{align}
for all $N\in\mathbb{N}_{0}$. 

\subsection{\label{subsec:The-gluing-step}The gluing step}

Recalling that $\tau_{q}$ was defined in (\ref{eq:tau_q}), we set $$t_{j}:=j\tau_{q}.$$

Let $\mathcal{J}$ be the set of indices $j$ such that 
\[
\left[t_{j}-2\epsilon_{q}\tau_{q},t_{j}+3\epsilon_{q}\tau_{q}\right]\subset B_{q}
\]
These are the ``bad'' indices that will be part of $\mathcal{B}_{q+1}$
and we have $\#(\mathcal{J})\sim\tau_{q}^{-1}\prod_{p=1}^{q-1}\epsilon_{p}$.

Then we define $\mathcal{J}^{*}=\{j\in\mathcal{J}|j+1\in\mathcal{J}\}$.
These are the indices where we will apply the following local wellposedness
result from \cite{derosaInfinitelyManyLeray2019}.

\begin{lem}[Proposition 3.5 in \cite{derosaInfinitelyManyLeray2019}]
\label{lem:local_existence}Given $\alpha\in\left(0,1\right)$, $\nu\in (0,1]$,  any
divergence-free vector field $u_{0}\in C^{\infty}(\mathbb T^d)$ and $T\lesssim_{\alpha}\left\Vert u_{0}\right\Vert _{1+\alpha}^{-1}$,
there exists a unique solution $u$ to the $(\nu,\gamma,T)$-fNS equations on $[0,T]\times\mathbb{T}^{d}$
such that $u\left(0,\cdot\right)=u_{0}$ and 
\[
\left\Vert u\right\Vert _{N+\alpha}\lesssim_{N,\alpha}\left\Vert u_{0}\right\Vert _{N+\alpha}\quad\textrm{for all}\quad N\in\mathbb{N}_{1}.
\]
\end{lem}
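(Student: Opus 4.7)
The plan is a Picard-type fixed-point argument in $C([0,T]; C^{1+\alpha}(\mathbb{T}^d))$, combining the Lagrangian/transport structure for the Euler-type nonlinearity with the fractional heat semigroup for the dissipation, in such a way that the bounds are uniform in $\nu\in(0,1]$. First I would rewrite the system as
\[
\partial_t u + (u\cdot\nabla)u + \nabla p = -\nu(-\Delta)^\gamma u, \qquad \divop u = 0,
\]
with the pressure recovered by the Leray projector so that Calderón--Zygmund estimates yield $\|\nabla p\|_{N+\alpha}\lesssim \|u\|_{1+\alpha}\|u\|_{N+\alpha}$ for all $N\geq 0$. Then I would define the iterates $u^{(k+1)}$ by freezing the transport velocity at $u^{(k)}$ in the linearized equation
\[
\partial_t u^{(k+1)} + (u^{(k)}\cdot\nabla)u^{(k+1)} + \nu(-\Delta)^\gamma u^{(k+1)} + \nabla p^{(k+1)} = 0,\quad \divop u^{(k+1)}=0,\quad u^{(k+1)}|_{t=0}=u_0,
\]
whose solvability is standard.

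For the key a priori bound, I would propagate the $C^{1+\alpha}$-norm of each iterate by combining a Chemin-type transport estimate along the Lagrangian flow of $u^{(k)}$ with the Duhamel formula for $S_\nu(s):=e^{-\nu s(-\Delta)^\gamma}$. The decisive observation is that on $\mathbb{T}^d$ the convolution kernel of $S_\nu(s)$ is the periodization of the stable kernel on $\mathbb{R}^d$, hence a non-negative probability kernel; therefore $S_\nu(s)$ is a contraction on every Hölder seminorm $[\,\cdot\,]_{N+\alpha}$ uniformly in $\nu\in(0,1]$ and $s\geq 0$. Thus the dissipation contributes no growth to the Hölder bound, and the only remaining obstruction is the transport/pressure term, which is quadratic in $\|u\|_{1+\alpha}$. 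A Grönwall argument then yields the uniform-in-$k$ estimate $\|u^{(k+1)}\|_{L^\infty_t C^{1+\alpha}_x}\lesssim \|u_0\|_{1+\alpha}$ provided $T\lesssim_\alpha \|u_0\|_{1+\alpha}^{-1}$, and the higher estimates $\|u^{(k+1)}\|_{N+\alpha}\lesssim_N \|u_0\|_{N+\alpha}$ for $N\geq 2$ are bootstrapped by the same mechanism.

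Convergence and uniqueness I would obtain via an $L^\infty_t L^2_x$ difference estimate for consecutive iterates, which genuinely benefits from the non-negativity of $(-\Delta)^\gamma$: the transport term cancels by incompressibility, the dissipation term is sign-definite, and Cauchy--Schwarz gives
\[
\|u^{(k+2)}-u^{(k+1)}\|_{L^\infty_t L^2_x} \lesssim T\,\|\nabla u^{(k+1)}\|_{L^\infty_{t,x}}\,\|u^{(k+1)}-u^{(k)}\|_{L^\infty_t L^2_x},
\]
which contracts for $T$ sufficiently small. The limit $u$ inherits the Hölder bounds from the iterates, and uniqueness in the class follows from the same $L^2$ estimate applied to two solutions.

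The main obstacle I expect is the interplay of the nonlocal operator $(-\Delta)^\gamma$ with Hölder norms \emph{uniformly in $\nu$}. A naive Schauder estimate on $\nu(-\Delta)^\gamma u$ loses $2\gamma$ derivatives, and the Fourier-side smoothing bound for $S_\nu(s)$ introduces a factor that fails to be integrable at $s=0$ when $\gamma\leq 1/2$. Keeping the dissipation \emph{inside} the semigroup rather than treating it as a source term, and exploiting the positivity of the fractional heat kernel, is the technical device that removes both difficulties and reduces the problem to the classical Hölder local well-posedness theory for the Euler equation.
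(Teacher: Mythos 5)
This lemma is a black-box citation (De Rosa, Prop.\ 3.5), so there is no internal proof to compare against, but your proposal can be assessed on its own terms. Your skeleton is the right one: Picard iteration with the transport velocity frozen, a uniform $C^{1+\alpha}$ a priori bound, bootstrap of higher norms, and an $L^2$ contraction for convergence and uniqueness (which is indeed clean, since the transport term drops by incompressibility and the fractional dissipation has a sign). You have also correctly identified the key structural fact: for $0<\gamma\leq 1$ the kernel of $e^{-\nu s(-\Delta)^\gamma}$ is a non-negative probability density, so $\partial_t+\nu(-\Delta)^\gamma$ satisfies a maximum principle and does not inflate Hölder norms, uniformly in $\nu$.

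The gap is in how this fact is supposed to be used. ``Combining a Chemin-type transport estimate along the Lagrangian flow with the Duhamel formula for $S_\nu$'' names two mechanisms that cannot be invoked simultaneously. If you keep the dissipation inside $S_\nu$ via Duhamel, the transport term $(u^{(k)}\cdot\nabla)u^{(k+1)}$ becomes a source; since $S_\nu$ provides no smoothing uniformly in $\nu$ (as you yourself note), the Hölder-contraction property of $S_\nu$ gives only
\[
\|u^{(k+1)}(t)\|_{1+\alpha}\leq\|u_0\|_{1+\alpha}+\int_0^t\Big(\|\nabla p^{(k+1)}\|_{1+\alpha}+\|u^{(k)}\cdot\nabla u^{(k+1)}\|_{1+\alpha}\Big)\,ds,
\]
and the last integrand costs $\|u^{(k)}\|_{1+\alpha}\|u^{(k+1)}\|_{2+\alpha}$ -- one derivative too many. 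Your claim that ``the only remaining obstruction $\dots$ is quadratic in $\|u\|_{1+\alpha}$'' is true of the pressure (via $\nabla p=-\nabla(-\Delta)^{-1}(\partial_i u^j\partial_j u^i)$) but false of the transport term, so the Duhamel route does not close. Conversely, if you instead straighten the characteristics to absorb the transport, the term $(-\Delta)^\gamma u^{(k+1)}$ transforms under the flow into an operator that is \emph{not} $(-\Delta)^\gamma$ applied to the composed function (the flow is not an isometry and does not commute with Fourier multipliers), so the semigroup picture, and with it the positivity observation, disappears. What De Rosa's Prop.\ 3.5 actually relies on is a unified transport--fractional-diffusion Hölder estimate (their Prop.\ 3.3, also quoted and used in this paper at (\ref{eq:modified_transpot})) of the form
\[
\|w(t)\|_\alpha\lesssim\|w(0)\|_\alpha+\int_0^t\big\|\big(\partial_s+v\cdot\nabla+\nu(-\Delta)^\gamma\big)w(s)\big\|_\alpha\,ds,\qquad t\|\nabla v\|_0\lesssim 1,
\]
uniformly in $\nu$, proved by a pointwise maximum-principle argument on the Hölder difference quotient in which the fractional term has a favorable sign at the extremal pair -- not by Duhamel composed with a semigroup contraction. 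Supply that estimate and the rest of your iteration, bootstrap and uniqueness argument go through as you describe.
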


Using this lemma, for any $j\in\mathcal{J}^{*}$, we define $v_{j}$
to be the solution of the hypodissipative Navier-Stokes equations
\begin{align*}
\dd_{t}v_{j}+\nu (-\Delta)^{\gamma}v_{j}+\divop v_{j}\otimes v_{j}+\grad p_{j} & =0\\
\divop v_{j} & =0\\
v_{j}(t_{j}) & =v_{\ell}(t_{j})
\end{align*}
 on $[t_{j},t_{j+2}]\times\T$. This is possible as 
\begin{align}
\tau_{q} & \lesssim\frac{\ell^{2\alpha}}{\delta_{q}^{1/2}\lambda_{q}}\label{eq:tau_l}\\
 & \ll\frac{\ell^{\alpha}}{\delta_{q}^{1/2}\lambda_{q}}\nonumber \\
&\lesssim\frac{1}{\|v_{\ell}(t_{j})\|_{1+\alpha}},\nonumber 
\end{align}
where we have implicitly used (\ref{eq:length_Freq}) and (\ref{eq:grad_v_l}).

We then have the bounds
\begin{align}
\|v_{j}\|_{L_{t}^{\infty}C_{x}^{N+\alpha}([t_{j},t_{{j+2}}]\times\T)} & \lesssim_{N}\|v_{\ell}(t_{j})\|{}_{C_{x}^{N+\alpha}}\label{eq:badeulerbounds}\\
 & \lesssim_{N}\delta_{q}^{1/2}\lambda_{q}\ell^{-N+1-\alpha},
\end{align}
 for $N\in\mathbb{N}_{1}$.

\begin{figure}[h]
\centering{}\includegraphics[scale=0.16]{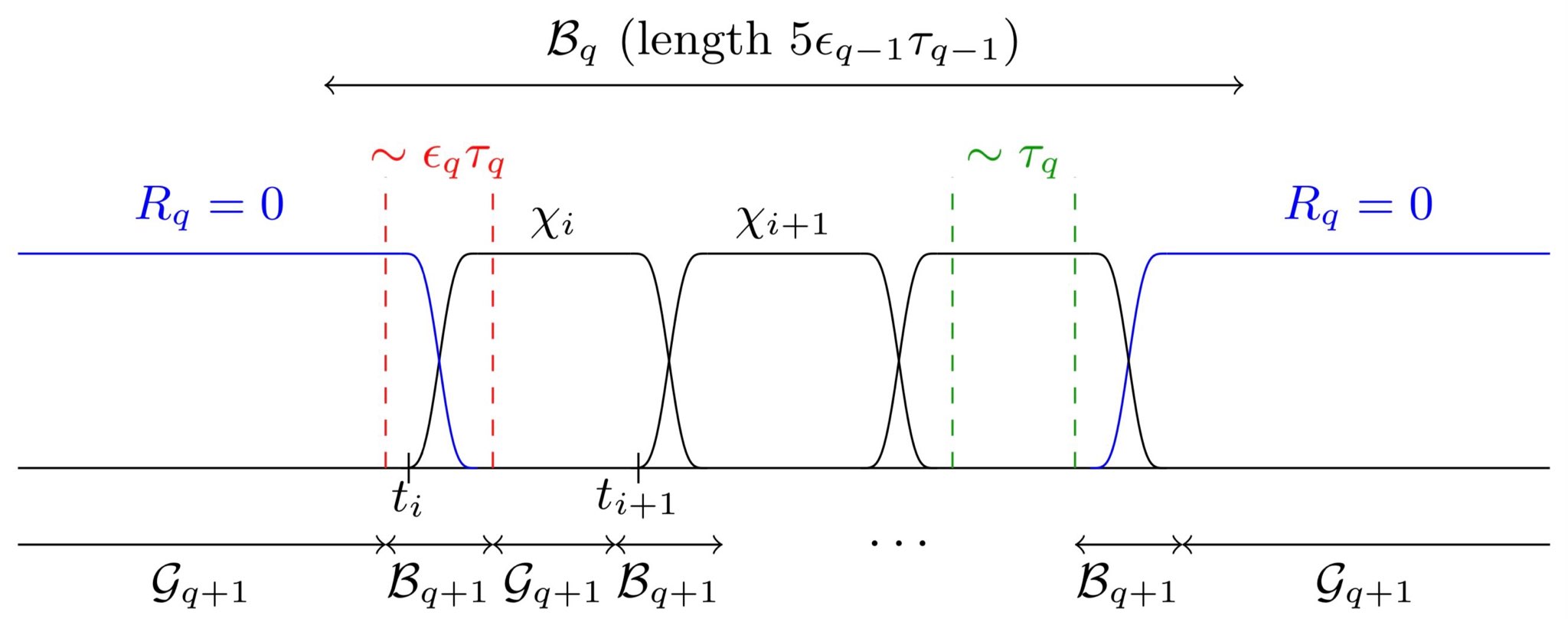}\caption{gluing scheme}
\end{figure}

Recall that $\mathcal{B}_{q}=\bigcup_{i}I_{i}^{b,q}$ is closed and
$\mathcal{G}_{q}=[0,T]\setminus\mathcal{B}_{q}=\bigcup_{i}I_{i}^{g,q}$
is open. Let $\{\chi_{j}^{b}\}_{j}\cup\{\chi_{i}^{g}\}_{i}$ be a
partition of unity of $[0,T]$ such that 
\begin{itemize}
\item $\supp\chi_{j}^{b}\subset[t_{j},t_{j+1}+\epsilon_{q}\tau_{q}]$ for
$j\in\mathcal{J}^{*}$, 
\item $\chi_{j}^{b}\equiv1$ in $[t_{j}+\epsilon_{q}\tau_{q},t_{j+1}]$
for $j\in\mathcal{J}^{*}$, 
\item $\supp\chi_{i}^{g}\subset I_{i}^{g,q}+B\left(0,\tau_{q}+\epsilon_{q}\tau_{q}\right)$, and
\item for $N\in\mathbb{N}_{0}$, 
\begin{equation}
\|\dd_{t}^{N}\chi_{i}^{g}\|_{L^{\infty}}+\|\dd_{t}^{N}\chi_{j}^{b}\|_{L^{\infty}}\lesssim_{N}(\epsilon_{q}\tau_{q})^{-N}.\label{eq:cutoff_time_deri}
\end{equation}
\end{itemize}

Note that because of (\ref{eq:Rq_zerowhere}) and (\ref{eq:time_interval}), we have $R_{q}=0$ on $\supp\chi_{i}^{g}$.

We now define the glued solution 
\begin{equation}
\overline{v}_{q}:=\sum_{i}\chi_{i}^{g}v_{q}+\sum_{j\in\mathcal{J^{*}}}\chi_{j}^{b}v_{j}.\label{eq:vbar_q}
\end{equation}
We also define $\mathcal{B}_{q+1}$ as the union of the intervals
$\left[t_{j}-2\epsilon_{q}\tau_{q},t_{j}+3\epsilon_{q}\tau_{q}\right]$
which lie in $\mathcal{B}_{q}$.

We will show in \Secref{Proof-of-gluing} that there exists a
smooth $\overline{R}_{q}$ such that $\left(\overline{v}_{q},\overline{R}_{q}\right)$
is a solution to (\ref{eq:hypo_NSR}).  For convenient notation, we define the material derivatives
\begin{align*}
D_{t,\ell} & :=\dd_{t}+v_{\ell}\cdot\grad\\
D_{t,q} & :=\dd_{t}+\overline{v}_{q}\cdot\grad
\end{align*}
We will then obtain the following estimates, which will be used to prove Proposition \ref{prop:iterative}.

\begin{prop}[Gluing estimates]
\label{prop:glue_est} For any $N\in\mathbb{N}_{0}$, we have 
\begin{align}
\left\Vert \overline{v}_{q}-v_{\ell}\right\Vert _{N+\alpha} & \lesssim_{N}\epsilon_{q}\tau_{q}\delta_{q+1}\ell^{-N-1+\alpha}\label{eq:glue1}\\
\left\Vert \overline{v}_{q}\right\Vert _{N+1} & \lesssim_{N}\delta_{q}^{\frac{1}{2}}\lambda_{q}\ell^{-N}\label{eq:glue2}\\
\|\overline{R}_{q}\|_{N+\alpha} & \lesssim_{N}\delta_{q+1}\ell^{-N+\alpha}\label{eq:glue3}\\
\|D_{t,q}\overline{R}_{q}\|_{N+\alpha} & \lesssim_{N}(\epsilon_{q}\tau_{q})^{-1}\delta_{q+1}\ell^{-N+\alpha}\label{eq:glue4}
\end{align}
\end{prop}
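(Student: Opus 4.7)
The plan is to follow the gluing framework of Isett, refined in \cite{buckmasterOnsagerConjectureAdmissible2017, derosaDimensionSingularSet2021}, adapted to the hypodissipative setting. The starting observation is that on the part of $[0,T]$ where only a single cutoff is supported, $\overline{v}_q$ coincides with a bona fide solution of the fNS equations: either $v_j$ (on a purely bad block) or $v_q$ (on a good interval, where $R_q\equiv 0$ by (\ref{eq:Rq_zerowhere})). Hence $\overline{R}_q$ can be taken to vanish identically there, and the entire analysis reduces to the overlap regions, each of temporal length $\epsilon_q\tau_q$.

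On such an overlap, $\overline{v}_q=\chi_1 u_1 + \chi_2 u_2$ for two fNS solutions $u_1,u_2$ and cutoffs with $\chi_1+\chi_2=1$. Using the identity $\chi_1\chi_2=\chi_1-\chi_1^2=\chi_2-\chi_2^2$, a direct computation gives
\[
\partial_t\overline{v}_q+\nu(-\Delta)^\gamma\overline{v}_q+\operatorname{div}(\overline{v}_q\otimes\overline{v}_q)+\nabla\bar p=\partial_t\chi_1\,(u_1-u_2)-\chi_1\chi_2\operatorname{div}\bigl((u_1-u_2)\otimes(u_1-u_2)\bigr),
\]
so up to trace corrections absorbed into $\bar p$ we may set
\[
\overline{R}_q:=\mathcal{R}\bigl(\partial_t\chi_1\,(u_1-u_2)\bigr)-\chi_1\chi_2\,(u_1-u_2)\otimes(u_1-u_2).
\]
The crux is then a stability estimate for $w:=v_j-v_\ell$ on $[t_j,t_{j+2}]$. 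Since $v_j$ solves fNS exactly and $v_\ell$ solves fNSR with stress $R_\ell$, $w$ satisfies a linearized fNS-type equation with source $\operatorname{div} R_\ell$ and vanishing initial data. A standard Gronwall argument in Hölder norms (cf.~\cite{buckmasterOnsagerConjectureAdmissible2017,derosaInfinitelyManyLeray2019}), combined with the mollification bound (\ref{eq:Rl_estimate}), the local existence estimate (\ref{eq:badeulerbounds}), and the smallness $\tau_q\|v_\ell\|_{C^1_x}\lesssim\ell^{2\alpha}\ll 1$ from (\ref{eq:tau_l}), yields the key difference estimate
\[
\|v_j-v_\ell\|_{C^0_t C^{N+\alpha}_x([t_j,t_{j+2}])}\lesssim_N \tau_q\epsilon_q\delta_{q+1}\ell^{-N-1+\alpha}.
\]
Control of the dissipative term $\nu(-\Delta)^\gamma w$ in this argument is precisely where (\ref{eq:diss_1}) is used. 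A parallel bound for $v_q-v_\ell$ on good-bad overlaps follows from mollification estimates and the improved bounds (\ref{eq:goodbounds}).

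With the difference estimate in hand, the four claimed bounds reduce to bookkeeping. (\ref{eq:glue1}) follows by decomposing $\overline{v}_q-v_\ell$ against the partition of unity and applying the difference estimate. (\ref{eq:glue2}) comes from (\ref{eq:badeulerbounds}) on bad blocks, (\ref{eq:goodbounds}) on good ones, together with $\lambda_q\ll\ell^{-1}$ from (\ref{eq:length_Freq}). For (\ref{eq:glue3}), using that $\mathcal{R}$ effectively gains one derivative, the first term of $\overline{R}_q$ is bounded by $(\epsilon_q\tau_q)^{-1}\|u_1-u_2\|_{N-1+\alpha}\lesssim\delta_{q+1}\ell^{-N+\alpha}$, while the quadratic term is smaller by a factor $\tau_q\epsilon_q\delta_{q+1}\ell^{-2+\alpha}\ll 1$ thanks to (\ref{eq:time-length}). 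Finally, (\ref{eq:glue4}) is obtained by applying $D_{t,q}$ to the formula for $\overline{R}_q$: the derivative of the cutoffs produces the extra $(\epsilon_q\tau_q)^{-1}$ factor, and the transport term $\overline{v}_q\cdot\nabla\overline{R}_q$ is subordinate because $\|\overline{v}_q\|_{C^1}\lesssim\delta_q^{1/2}\lambda_q\lesssim\ell^{-1}$, once again absorbed by (\ref{eq:time-length}).

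The main obstacle I anticipate is the stability estimate for $w=v_j-v_\ell$ in $C^{N+\alpha}_x$: propagating such Hölder bounds through the fNS flow for arbitrary $N$ requires careful treatment of both the linearized transport terms $\operatorname{div}(v_\ell\otimes w + w\otimes v_j)$ and the fractional dissipative term $\nu(-\Delta)^\gamma w$, all while keeping the Gronwall constant uniformly bounded over an interval of length $2\tau_q$. Checking that each of the parameter inequalities (\ref{eq:time-length})--(\ref{eq:diss_1}) is invoked at precisely the right juncture, and that the Schauder-type bounds for $\mathcal{R}$ are applied with the correct loss, forms the technical core of Section \ref{sec:Proof-of-gluing}.
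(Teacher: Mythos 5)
Your high-level framework is correct---the reduction to overlap regions of length $\epsilon_q\tau_q$, the formula for $\overline{R}_q$, the difference estimate for $v_j-v_\ell$ via Gronwall with $\operatorname{div}R_\ell$ as source, and the role of (\ref{eq:diss_1}) for the dissipative term all match the paper. However, your proposal skips the central technical device of the section, and as a result your argument for (\ref{eq:glue3}) and (\ref{eq:glue4}) has a genuine gap. You claim the first term of $\overline{R}_q$ is controlled by ``$\mathcal{R}$ effectively gains one derivative,'' bounding it by $(\epsilon_q\tau_q)^{-1}\|u_1-u_2\|_{N-1+\alpha}$. This is fine for $N\geq 1$, but at $N=0$ it would require a $C^{-1+\alpha}$-type norm of $v_j-v_{j+1}$, which is not produced by your Gronwall argument and is not what (\ref{eq:vestimate-1}) gives. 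The paper resolves precisely this difficulty by introducing the Biot--Savart potentials $z_j=\mathcal{B}v_j$, $z_\ell=\mathcal{B}v_\ell$, rewriting $\mathcal{R}(v_j-v_{j+1})=\mathcal{R}\delta(z_j-z_{j+1})$ where $\mathcal{R}\delta$ is a genuine order-zero Calder\'on--Zygmund operator, and then running a \emph{separate} transport/Gronwall estimate for $z_j-z_\ell$ (\Propref{zglue}) that gains a full factor of $\ell$ over the velocity estimate. Without this potential-level estimate, (\ref{eq:glue3}) does not close at $N=0$.

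The gap propagates into your argument for (\ref{eq:glue4}). You assert that the transport contribution $\overline{v}_q\cdot\nabla\overline{R}_q$ is ``subordinate \ldots once again absorbed by (\ref{eq:time-length}).'' But a crude estimate gives $\|\overline{v}_q\cdot\nabla\overline{R}_q\|_\alpha\lesssim\delta_{q+1}\ell^{-1+\alpha}$, which one would need to dominate by $(\epsilon_q\tau_q)^{-1}\delta_{q+1}\ell^{\alpha}$; this requires $\epsilon_q\tau_q\ell^{-1}\lesssim 1$, and unwinding the parameters gives $\epsilon_q\tau_q\ell^{-1}\sim\lambda_q^{b\beta-\sigma/2-3\alpha/2}$, which diverges since $\sigma$ is constrained by (\ref{eq:sigma_bound}) to be much smaller than $\beta$. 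So the transport term is \emph{not} absorbable in this naive way. What actually happens in the paper's proof (\Propref{glued_stress_est}) is that $D_{t,\ell}$ is pushed through $\mathcal{R}\delta$, producing the commutator $[v_\ell\cdot\nabla,\mathcal{R}\delta]$ applied to $z_j-z_{j+1}$, which is then controlled by the singular-integral commutator estimate of \Lemref{singular_comm}, together with the material derivative estimate (\ref{eq:ztransportestimate}) for $D_{t,\ell}(z_j-z_\ell)$. Both the $z$-potential construction and the commutator lemma are indispensable; your proposal does not mention either, and without them (\ref{eq:glue3}) at $N=0$ and (\ref{eq:glue4}) at all $N$ are not proved.
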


We will prove \Propref{glue_est} in \Secref{Proof-of-gluing} below.

We remark that the estimate (\ref{eq:glue1}) of \Propref{glue_est}, when combined with (\ref{eq:time-length}), implies in particular
\begin{equation}
\left\Vert \overline{v}_{q}-v_{\ell}\right\Vert _{\alpha}\lesssim\epsilon_{q}\tau_{q}\delta_{q+1}\ell^{-1+\alpha}\lesssim\delta_{q+1}^{\frac{1}{2}}\ell^{\alpha}\label{eq:glue5}.
\end{equation}

We also note that, because future modifications of the solution from this point on will
only happen in the temporal regions $\left[t_{j}-\epsilon_{q}\tau_{q},t_{j}+2\epsilon_{q}\tau_{q}\right]$
(where $j\in\mathcal{J}$), we will later have $v_{q+1}=\overline{v}_{q}$
and $\overline{R}_{q}=0$ outside those temporal regions. Furthermore,
(\ref{eq:Rq_zerowhere}) and (\ref{eq:goodbounds}) will hold with
$q$ changed to $q+1$.

\subsection{Perturbation step}

The third key step in the proof of Proposition \ref{prop:iterative} is a perturbation lemma arising from the convex
integration framework.  We state this result in the next proposition.

\begin{prop}[Convex integration]
\label{prop:convex_int} There is a smooth solution $\left(v_{q+1},R_{q+1}\right)$
to (\ref{eq:hypo_NSR}) which satisfies $v_{q+1}=\overline{v}_{q}$ outside
the temporal regions $\left[t_{j}-\epsilon_{q}\tau_{q},t_{j}+2\epsilon_{q}\tau_{q}\right]$
($j\in\mathcal{J}$), along with the estimates
\begin{align}
\left\Vert v_{q+1}-\overline{v}_{q}\right\Vert _{0}+\frac{1}{\lambda_{q+1}}\left\Vert v_{q+1}-\overline{v}_{q}\right\Vert _{1} & \leq\frac{M}{2}\delta_{q+1}^{\frac{1}{2}},\label{eq:perturb_1}
\end{align}
and
\begin{align}
\left\Vert R_{q+1}\right\Vert _{0} & \lesssim\epsilon_{q+1}\delta_{q+2}\lambda_{q+1}^{-4\alpha},\label{eq:finalR_=00007Bq+1=00007D}
\end{align}
where $M>0$ is a universal geometric constant (depending on $d$).
\end{prop}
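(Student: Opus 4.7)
My plan is to define $v_{q+1}=\bar v_q+w_{q+1}$ via a standard Onsager-scheme convex-integration perturbation that is supported in time on $\bigcup_{j\in\mathcal J}[t_j-\epsilon_q\tau_q, t_j+2\epsilon_q\tau_q]$, built from Beltrami-type waves adapted to Lagrangian coordinates (so the construction extends to arbitrary dimension $d\geq 3$). For each $j\in\mathcal{J}$ I introduce a smooth temporal cutoff $\eta_j$ supported in $[t_j-\epsilon_q\tau_q,t_j+2\epsilon_q\tau_q]$ with $\eta_j\equiv 1$ on a slightly smaller subinterval and $\|\partial_t^N\eta_j\|_0\lesssim_N(\epsilon_q\tau_q)^{-N}$. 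This automatically forces $v_{q+1}=\bar v_q$ outside the bad regions, so the good-region conclusions (\ref{eq:Rq_zerowhere}) and (\ref{eq:goodbounds}) transfer to step $q+1$. On each such interval I solve for Lagrangian coordinates $\Phi_j$ via $D_{t,q}\Phi_j=0$ with $\Phi_j(t_j,\cdot)=\Id$; using (\ref{eq:glue2}) together with (\ref{eq:time-length}), the map $\Phi_j$ is a small perturbation of the identity with $\|\nabla^{N+1}\Phi_j\|_0\lesssim_N\ell^{-N}$, giving a well-controlled diffeomorphism.

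Next I apply the geometric decomposition from \Appref{Geometric-preliminaries} to the pulled-back normalized stress, writing $\Id-\bar R_q(\Phi_j^{-1})/\rho_j=\sum_k(\gamma_k^{(j)})^2\,\xi_k\otimes\xi_k$ over a fixed finite family of rational directions, where $\rho_j\sim\delta_{q+1}$ slightly dominates $\|\bar R_q\|_0$ (possible by (\ref{eq:glue3})) and the amplitudes $\gamma_k^{(j)}$ inherit $C^N$-bounds from $\bar R_q$ through (\ref{eq:glue3})--(\ref{eq:glue4}). The principal perturbation is the Beltrami-type wave
\begin{equation*}
w_o(t,x)=\sum_{j\in\mathcal J}\eta_j(t)\sum_k a_{j,k}(t,x)\,B_k\,e^{i\lambda_{q+1}k\cdot\Phi_j(t,x)},
\end{equation*}
where $a_{j,k}$ is essentially $\gamma_k^{(j)}\circ\Phi_j$ twisted by $(\nabla\Phi_j)^{-T}$, and each $B_k\perp k$ so that individual summands are divergence-free to leading order. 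A standard divergence corrector $w_c$ of size $\lambda_{q+1}^{-1}$ relative to $w_o$ ensures that $w_{q+1}:=w_o+w_c$ is exactly divergence-free, and direct stationary-phase estimates yield $\|w_{q+1}\|_0\lesssim\delta_{q+1}^{1/2}$ together with $\|w_{q+1}\|_1\lesssim\lambda_{q+1}\delta_{q+1}^{1/2}$, giving (\ref{eq:perturb_1}).

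The new stress $R_{q+1}$ is then obtained by applying the antidivergence operator $\mathcal R$ to four natural error pieces. The \emph{oscillation error} from $\divop(w_o\otimes w_o+\bar R_q)$ has a zero-frequency contribution that cancels by the geometric identity (modulo a pressure gradient), while its high-frequency pieces are inverted using the non-resonance $k_1+k_2\neq 0$ of the chosen family and $\mathcal R$, gaining a factor $\lambda_{q+1}^{-1}$. The \emph{transport error} $\partial_t w_{q+1}+\bar v_q\cdot\nabla w_{q+1}$ is handled by $D_{t,q}$-invariance of $\Phi_j$, which kills the fast-phase contribution and leaves only terms involving $\partial_t\eta_j$, $D_{t,q}\gamma_k^{(j)}$ and derivatives of $\nabla\Phi_j$, controlled by (\ref{eq:glue4}) and (\ref{eq:cutoff_time_deri}). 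The \emph{Nash error} $\divop(w_{q+1}\otimes\bar v_q+\bar v_q\otimes w_{q+1})$ is handled analogously by $\mathcal R$ together with stationary phase. The new \emph{dissipative error} $\nu(-\Delta)^\gamma w_{q+1}$, absent in the Euler case, contributes after $\mathcal R$ an extra factor $\lambda_{q+1}^{2\gamma}$, yielding a size of order $\delta_{q+1}^{1/2}\lambda_{q+1}^{-1+2\gamma+O(\alpha)}$.

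The main obstacle is verifying that each of these four pieces is absorbed by the right-hand side of (\ref{eq:finalR_=00007Bq+1=00007D}). After the sharp-cutoff cost $(\epsilon_q\tau_q)^{-1}$ is accounted for (and $\tau_q^{-1}\sim\delta_q^{1/2}\lambda_q$), the oscillation, transport and Nash errors all produce a size of order $\epsilon_q^{-1}\delta_{q+1}^{1/2}\delta_q^{1/2}\lambda_{q+1}^{-1+O(\alpha)}\lambda_q^{1+O(\alpha)}$, which is precisely the content of (\ref{eq:stress_size_ind1}). The dissipative contribution is instead absorbed by the separate inequality (\ref{eq:stress_size_ind3}), whose validity is exactly where the hypothesis $\beta+2\gamma<1$ is consumed. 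This fractional-dissipation step is the principal new difficulty compared with the Euler construction of \cite{derosaDimensionSingularSet2021}, and is what pins the fractional-Laplacian exponent to $\gamma<\tfrac12$ in Theorem \ref{thm:thm1}.
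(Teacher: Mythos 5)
Your plan reproduces the overall skeleton of the paper's construction: temporal cutoffs $\rho_i$ of scale $\epsilon_q\tau_q$ localized to the bad intervals, Lagrangian coordinates $\Phi_i$ with $D_{t,q}\Phi_i=0$, a principal fast-oscillating perturbation plus a divergence corrector of relative size $\lambda_{q+1}^{-1}$, and the splitting of $R_{q+1}$ into oscillation, transport, Nash, and dissipative pieces with the dissipative piece absorbed by the parameter inequality (\ref{eq:stress_size_ind3}).

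However, there is a genuine gap in your treatment of the oscillation error, and it concerns the choice of building blocks. You propose ``Beltrami-type waves'' $\sum_k a_{j,k}B_k\,e^{i\lambda_{q+1}k\cdot\Phi_j}$ with $B_k\perp k$, paired with a Nash-type rank-one decomposition $\Id-\overline R_q/\delta_{q+1}=\sum_k\gamma_k^2\,\xi_k\otimes\xi_k$, and assert that the high-frequency cross-terms are handled by the nonresonance $k_1+k_2\neq0$ together with $\mathcal R$, ``gaining a factor $\lambda_{q+1}^{-1}$.'' The accounting does not close: a cross-term of the form $B_{k_1}\otimes B_{k_2}\,e^{i\lambda_{q+1}(k_1+k_2)\cdot\Phi}$ has amplitude $\sim\delta_{q+1}$, and its divergence carries a leading factor $i\lambda_{q+1}\big((k_1+k_2)\cdot\nabla\Phi^T B_{k_1}\big)B_{k_2}$. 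The orthogonality $B_{k_1}\perp k_1$ kills $k_1\cdot B_{k_1}$ but not $k_2\cdot B_{k_1}$, so this contribution is genuinely $O(\lambda_{q+1}\delta_{q+1})$; inverting with $\mathcal R$ via (\ref{eq:antidiv_est}) only regains $\lambda_{q+1}^{-1}$, leaving $O(\delta_{q+1})$, which is vastly larger than the target $\epsilon_{q+1}\delta_{q+2}\lambda_{q+1}^{-4\alpha}$. In three dimensions the classical Beltrami identity ($W_k\cdot\nabla W_{k'}+W_{k'}\cdot\nabla W_k=\nabla(W_k\cdot W_{k'})$ for waves of equal frequency) turns these cross-divergences into pressure gradients, but that identity is specific to $d=3$ and is unavailable in the general $d\geq3$ setting you claim to cover, which is precisely why the paper does not use Beltrami waves.

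The paper instead uses Mikado flows $W(R,\xi)$ as in \cite{buckmasterOnsagerConjectureAdmissible2017}, whose defining structural identity is $\divop_\xi\big(W(R,\xi)\otimes W(R,\xi)\big)=0$ (property (\ref{eq:divf1})). After Fourier expansion this becomes $k^{\flat}\lrcorner\, C_k(R)=0$ (equation (\ref{eq:kflat_c})), which is \emph{stronger} than $k\cdot a_k=0$: it says that the tensor Fourier coefficients of $W\otimes W$ (not just of $W$) are perpendicular to their own frequency. This is exactly what makes the dangerous term $\mathcal O_3=d_x\big(e^{i2\pi\langle\lambda_{q+1}k,\Phi_i\rangle}\big)\lrcorner\,\mathbf\Phi_i^*\big(C_k(\underline{R_i})\big)$ vanish identically in the paper's estimate of $\mathcal O_1$, so the only divergence left is the one hitting the slowly-varying amplitude $\nabla\Phi_i^{-1}C_k(\underline{R_i}\circ\mathbf\Phi_i)\nabla\Phi_i^{-T}$, of size $\delta_{q+1}\ell^{-1}$. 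Your ansatz lacks this second perpendicularity and therefore cannot produce (\ref{eq:finalR_=00007Bq+1=00007D}). To repair the argument you must replace the plane-wave/Nash decomposition by Mikado flows (or an equivalent pipe-flow construction with $\divop_\xi(W\otimes W)=0$); the rest of your outline — Lagrangian transport, cutoffs, antidivergence inversion, and the parameter bookkeeping via (\ref{eq:stress_size_ind1}) and (\ref{eq:stress_size_ind3}) — then goes through essentially as you sketched.
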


The proof of this proposition will be given in \Secref{Convex-integration-and} below.

\subsection{Proof of the main iterative proposition}

With the above tools in hand, we are now ready to prove Proposition \ref{prop:iterative}, making use of Proposition \ref{prop:glue_est} and Proposition \ref{prop:convex_int}, which are proved in Sections \ref{sec:Proof-of-gluing} and \ref{sec:Convex-integration-and}, respectively.

\begin{proof}[Proof of Proposition \ref{prop:iterative}]
We first observe that
\begin{align*}
\|v_{q}-v_{q+1}\|_{0} & \leq\|v_{q}-v_{\ell}\|_{0}+\|v_{\ell}-\overline{v}_{q}\|_{0}+\|\overline{v}_{q}-v_{q+1}\|_{0}\\
 & \leq C\epsilon_{q}^{\frac{1}{2}}\delta_{q+1}^{\frac{1}{2}}\lambda_{q}^{-\frac{3\alpha}{2}}+C\delta_{q+1}^{\frac{1}{2}}\ell^{\alpha}+\frac{M}{2}\delta_{q+1}^{\frac{1}{2}}\leq M\delta_{q+1}^{\frac{1}{2}}
\end{align*}
where $C$ is shorthand for the implied constants of (\ref{eq:lq_0})
and (\ref{eq:glue5}). Since $$\max\{\epsilon_{q}^{\frac{1}{2}}\lambda_{q}^{-\frac{3\alpha}{2}},\ell^{\alpha}\}\rightarrow 0$$ as $a\rightarrow\infty$, 
the last inequality is true provided that $a$ is chosen sufficiently large. 

Similarly, for large $a$, because of (\ref{eq:grad_vq_sup}), (\ref{eq:glue2})
and (\ref{eq:perturb_1}), we have
\begin{align*}
\|v_{q}-v_{q+1}\|_{1} & \leq\underbrace{\left\Vert v_{q}\right\Vert _{1}+\|\overline{v}_{q}\|_{1}}_{C\delta_{q}^{\frac{1}{2}}\lambda_{q}}+\underbrace{\|\overline{v}_{q}-v_{q+1}\|_{1}}_{\frac{M}{2}\delta_{q+1}^{\frac{1}{2}}\lambda_{q+1}}\leq M\delta_{q+1}^{\frac{1}{2}}\lambda_{q+1}.
\end{align*}
We have thus shown (\ref{eq:iter_prop_est}), which in turn implies
(\ref{eq:vq_sup}) and (\ref{eq:grad_vq_sup}) with $q$ replaced
by $q+1$. On the other hand, (\ref{eq:finalR_=00007Bq+1=00007D})
yields the next iteration of (\ref{eq:Rq_sup}) (for large enough
$a$). Recalling that all the desired properties regarding $\mathcal{B}_{q+1}$ were
established in \Subsecref{The-gluing-step}, this completes the proof of the proposition.
\end{proof}

\section{\label{sec:Proof-of-gluing}Gluing estimates}

In this section, we construct $\overline{R}_q$ and prove the gluing estimate results in \Propref{glue_est}, which played a key role in the proof of Proposition \ref{prop:iterative} in the previous section. 

We recall that $\overline{v}_{q}$ was defined in (\ref{eq:vbar_q}). We first note
that (\ref{eq:glue2}) follows immediately from (\ref{eq:badeulerbounds})
and (\ref{eq:goodbounds}). On the other hand, (\ref{eq:glue3}) and
(\ref{eq:glue4}) hold automatically outside the overlapping temporal
regions $\left[t_{j},t_{j}+\epsilon_{q}\tau_{q}\right]$ (where $j\in\mathcal{J}$),
since $\overline{v}_{q}$ is an exact solution and the stress
is therefore zero in this regime. We now consider what happens near the overlapping 
regions. 

\subsection{Bad-bad interface}

Consider any index $j\in\mathcal{J}^{*}$ such that $j+1\in\mathcal{J}^{*}$.
Then $\supp(\chi_{j}^{b}\chi_{j+1}^{b})$ lies in an interval of length
$\epsilon_{q}\tau_{q}$ where $\overline{v}_{q}$ satisfies 
\[
\dd_{t}\overline{v}_{q}+\nu(-\Delta)^{\gamma}\overline{v}_{q}+\divop\overline{v}_{q}\otimes\overline{v}_{q}+\grad\overline{p}_{q}=\divop\overline{R}_{q},
\]
where 
\begin{equation}
\overline{R}_{q}=\dd_{t}\chi_{j}^{b}\mathcal{R}(v_{j}-v_{j+1})-\chi_{j}^{b}(1-\chi_{j}^{b})(v_{j}-v_{j+1})\otimes(v_{j}-v_{j+1}).\label{eq:glued_stress}
\end{equation}
and $\mathcal{R}$ is as defined in \Appref{Geometric-preliminaries}.

To treat the fractional Laplacian term, we recall the following lemma from \cite{derosaInfinitelyManyLeray2019}.
\begin{lem}[Theorem B.1 in \cite{derosaInfinitelyManyLeray2019}]
For any $\gamma,\epsilon>0$ and $\beta\geq0$ such that $\beta+2\gamma+\epsilon\leq1$,
we have 
\[
\left\Vert \left(-\Delta\right)^{\gamma}f\right\Vert _{\beta}\lesssim_{\epsilon}\left\Vert f\right\Vert _{\beta+2\gamma+\epsilon}\quad\forall f\in C^{\beta+2\gamma+\epsilon}.
\]
\end{lem}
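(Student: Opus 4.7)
The natural framework here is Littlewood–Paley decomposition, since the fractional Laplacian is a Fourier multiplier with symbol $|\xi|^{2\gamma}$ and the Hölder scale sits inside the Besov scale $B^{s}_{\infty,\infty}$. The plan is to reduce the estimate to a standard Bernstein-type bound on frequency-localized pieces and then convert Besov norms back into Hölder norms, paying an $\epsilon$ to avoid integer Besov indices.

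First I would decompose $f = \sum_{k\ge -1}\Delta_k f$ using the Littlewood–Paley projectors on $\mathbb{T}^{d}$, where $\Delta_k$ localizes at frequency $\sim 2^{k}$ (and $\Delta_{-1}$ captures the zero mode together with the lowest frequencies). Using that $\widehat{\Delta_k f}$ is supported in an annulus of size $2^{k}$, Bernstein's inequality gives
\[
\|\Delta_{k}(-\Delta)^{\gamma}f\|_{L^{\infty}} \lesssim 2^{2\gamma k}\|\Delta_{k}f\|_{L^{\infty}},
\]
while $\|\Delta_{-1}(-\Delta)^{\gamma}f\|_{L^{\infty}}\lesssim \|f\|_{L^\infty}$ by a direct kernel estimate. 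Summing the localized bounds weighted by $2^{\beta k}$ would then give the Besov version of the estimate,
\[
\|(-\Delta)^{\gamma}f\|_{B^{\beta}_{\infty,\infty}} \lesssim \|f\|_{B^{\beta+2\gamma}_{\infty,\infty}}.
\]

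Next I would convert this into the claimed Hölder estimate. The identification $C^{s}=B^{s}_{\infty,\infty}$ (with equivalent norms) holds whenever $s>0$ is \emph{not} an integer; at integer values, $B^{s}_{\infty,\infty}$ is the (strictly larger) Zygmund space. This is precisely where the parameter $\epsilon>0$ enters: we have the embeddings
\[
C^{\beta+2\gamma+\epsilon}\hookrightarrow B^{\beta+2\gamma+\epsilon}_{\infty,\infty}\hookrightarrow B^{\beta+2\gamma}_{\infty,\infty},
\]
the first being the trivial direction of the Hölder–Besov identification (which holds for \emph{every} $s>0$), and the second being a standard Besov embedding losing $\epsilon$ derivatives. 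On the output side, for \emph{non-integer} $\beta$ we can pass from $B^{\beta}_{\infty,\infty}$ back to $C^{\beta}$. When $\beta$ happens to be an integer we instead apply the argument with $\beta+\epsilon/2$ in place of $\beta$ and use $C^{\beta+\epsilon/2}\hookrightarrow C^{\beta}$, which still fits inside $\beta+2\gamma+\epsilon\le 1$ up to shrinking $\epsilon$; this dispenses with any integer issue on both ends simultaneously.

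The main obstacle is really just this bookkeeping around integer regularities: the clean multiplier statement on Besov spaces is essentially free, but one must be careful that the intermediate Besov indices we pass through are treated uniformly (hence the $\epsilon$-loss is genuinely needed and cannot be removed by this method). A purely real-variable alternative would be to use the singular integral representation $(-\Delta)^{\gamma}f(x)=c_{d,\gamma}\,\mathrm{p.v.}\!\int (f(x)-f(y))|x-y|^{-d-2\gamma}\,dy$ (periodized on $\mathbb{T}^{d}$) and estimate the resulting kernel by splitting $|x-y|\lesssim |h|$ versus $|x-y|\gtrsim |h|$ when computing the $\beta$-Hölder increment in $h$; this route avoids Besov spaces but is messier and ultimately must confront the same integer-regularity borderlines, so I would prefer the Littlewood–Paley path above.
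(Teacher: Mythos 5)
The paper does not prove this lemma: it is cited verbatim as Theorem B.1 of \cite{derosaInfinitelyManyLeray2019} and used as a black box, so there is no in-paper argument to compare your proposal against. Your Littlewood--Paley proof is correct and self-contained. Two small points are worth making precise. First, because the hypotheses force $\beta+2\gamma+\epsilon\le 1$ with $\gamma,\epsilon>0$, both $\beta$ and $\beta+2\gamma$ lie in $[0,1)$, so the only integer index you can actually hit is $\beta=0$; your shift $\beta\mapsto\beta+\epsilon/2$, $\epsilon\mapsto\epsilon/2$ preserves $\beta+2\gamma+\epsilon\le1$ and handles it, and this is also the only place the $\epsilon$-dependence of the implied constant genuinely enters, since for fixed $\beta\in(0,1)$ the identification $C^\beta=B^\beta_{\infty,\infty}$ is exact and no $\epsilon$ is needed. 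Second, the Bernstein bound $\|\Delta_k(-\Delta)^\gamma f\|_{L^\infty}\lesssim 2^{2\gamma k}\|\Delta_k f\|_{L^\infty}$ is the standard one for a symbol homogeneous of degree $2\gamma>0$; on $\mathbb{T}^d$ the $k=-1$ piece is bounded because the nonzero lattice frequencies satisfy $|\xi|\ge1$ while the zero mode is annihilated by $(-\Delta)^\gamma$. With these points spelled out the argument is complete, and it is a reasonable standalone alternative to whatever real-variable argument \cite{derosaInfinitelyManyLeray2019} uses internally; the Besov formulation has the virtue of isolating the $\epsilon$-loss as a pure H\"older/Zygmund dictionary issue at the integer endpoint.
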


As usual, we decompose $v_{j}-v_{j+1}=(v_{j}-v_{\ell})-(v_{j+1}-v_{\ell})$.
By symmetry, we only need to prove estimates for $v_{j}-v_{\ell}$.
\begin{prop}
\label{prop:vglue}For $N\in\mathbb{N}_{0}$ and $t\in\left(t_{j},t_{j}+2\tau_{q}\right)$, we have
\begin{align}
\|v_{j}-v_{\ell}\|_{N+\alpha} & \lesssim_{N}\epsilon_{q}\tau_{q}\delta_{q+1}\ell^{-N-1+\alpha}\label{eq:vestimate-1}\\
\|(\dd_{t}+v_{\ell}\cdot\grad+\nu\left(-\Delta\right)^{\gamma})\left(v_{j}-v_{\ell}\right)\|_{N+\alpha} & \lesssim_{N}\epsilon_{q}\delta_{q+1}\ell^{-N-1+\alpha}\label{eq:vlestimate}\\
\|D_{t,\ell}\left(v_{j}-v_{\ell}\right)\|_{N+\alpha} & \lesssim_{N}\epsilon_{q}\delta_{q+1}\ell^{-N-1+\alpha}\label{eq:vtransportestimate-1}
\end{align}
\end{prop}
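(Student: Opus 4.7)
The plan is to write an evolution equation for $w := v_j - v_\ell$. Subtracting the $(\nu,\gamma,\tau_q)$-fNS equation for $v_j$ from the $(\nu,\gamma,T)$-fNSR equation inherited by $v_\ell = \psi_\ell * v_q$ (with stress $R_\ell$), and using $v_j\cdot\grad v_j - v_\ell\cdot\grad v_\ell = v_\ell\cdot\grad w + w\cdot\grad v_j$, one arrives at
\[
D_{t,\ell} w + w\cdot\grad v_j + \nu(-\Delta)^\gamma w + \grad q = -\divop R_\ell,\qquad w(t_j)=0,\qquad \divop w = 0,
\]
where $q := p_j - p_\ell$. Equivalently,
\[
(\dd_t + v_\ell\cdot\grad + \nu(-\Delta)^\gamma) w = F, \qquad F := -w\cdot\grad v_j - \grad q - \divop R_\ell,
\]
so that (\ref{eq:vlestimate}) asks precisely for a $C^{N+\alpha}$ estimate on $F$.

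I would prove (\ref{eq:vestimate-1}) and (\ref{eq:vlestimate}) simultaneously by a continuity/bootstrap argument. For the pressure term, I use $\divop w = \divop v_j = 0$ to write $w\cdot\grad v_j = \divop(w\otimes v_j)$ and $v_\ell\cdot\grad w = \divop(v_\ell\otimes w)$, so taking divergence of the $w$-equation gives $\Delta q = -\divop\divop(v_\ell\otimes w + w\otimes v_j) - \divop\divop R_\ell$. The Calder\'on--Zygmund boundedness of $\grad\Delta^{-1}\divop\divop$ on $C^{N+\alpha}$ then yields
\[
\|\grad q\|_{N+\alpha} \lesssim_N \|v_\ell\otimes w\|_{N+\alpha} + \|v_j\otimes w\|_{N+\alpha} + \|R_\ell\|_{N+\alpha}.
\]
Combining with the product inequality, the controls $\|v_\ell\|_0+\|v_j\|_0\lesssim 1$ together with $\|v_\ell\|_{N+\alpha}+\|v_j\|_{N+\alpha}\lesssim \delta_q^{1/2}\lambda_q\ell^{-N+1-\alpha}$ (from (\ref{eq:grad_v_l}) and (\ref{eq:badeulerbounds})), and (\ref{eq:Rl_estimate}), the estimate $\|F\|_{N+\alpha}\lesssim \epsilon_q\delta_{q+1}\ell^{-N-1+\alpha}$ follows provided the bootstrap assumption on $\|w\|_{N+\alpha}$ is in place.

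Next, integrating $D_{t,\ell} w = F - \nu(-\Delta)^\gamma w$ with $w(t_j)=0$ along the Lagrangian flow of $v_\ell$, and handling the commutator $[D_{t,\ell},\grad^N]$ via $\|\grad v_\ell\|_0\lesssim \delta_q^{1/2}\lambda_q$, gives
\[
\|w\|_{N+\alpha}(t) \lesssim \int_{t_j}^t\bigl(\|F\|_{N+\alpha} + \|\nu(-\Delta)^\gamma w\|_{N+\alpha}\bigr)\,ds,
\]
up to Gronwall corrections of size $\tau_q\|\grad v_\ell\|_\alpha\lesssim \ell^{2\alpha}$, which are $o(1)$ by (\ref{eq:tau_l}). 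The dissipative contribution is controlled via the fractional H\"older estimate $\|(-\Delta)^\gamma w\|_{N+\alpha}\lesssim \|w\|_{N+\alpha+2\gamma+\epsilon}$ cited from \cite{derosaInfinitelyManyLeray2019} and the parameter inequality (\ref{eq:diss_1}), which make its contribution $o(1)$ as well. This closes the bootstrap and yields (\ref{eq:vestimate-1}); substituting back into the equation gives (\ref{eq:vlestimate}). Estimate (\ref{eq:vtransportestimate-1}) then follows from $D_{t,\ell}w = F - \nu(-\Delta)^\gamma w$ together with $\|\nu(-\Delta)^\gamma w\|_{N+\alpha}\lesssim \epsilon_q\tau_q\delta_{q+1}\ell^{-N-1-2\gamma-\epsilon+\alpha}\lesssim \epsilon_q\delta_{q+1}\ell^{-N-1+\alpha}$, the last step invoking (\ref{eq:diss_1}) once more.

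The principal obstacle, relative to the Euler analog treated in \cite{derosaDimensionSingularSet2021,buckmasterOnsagerConjectureAdmissible2017}, is exactly the fractional dissipation $\nu(-\Delta)^\gamma$. Being nonlocal and of positive order $2\gamma$, it cannot be absorbed for free in the Gronwall step, and its control hinges entirely on the parameter inequality (\ref{eq:diss_1})---itself a quantitative manifestation of the hypothesis $\beta + 2\gamma < 1$. The remaining ingredients---Calder\'on--Zygmund pressure estimates, the commutator $[D_{t,\ell},\grad^N]$, and short-time Lagrangian integration over an interval of length $\lesssim \tau_q$---are standard but require careful bookkeeping of the $\ell^{+\alpha}$ gains to ensure each matches on both sides of the three stated inequalities.
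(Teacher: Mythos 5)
There is a genuine gap in your pressure estimate, and it is exactly at the step where the paper's approach differs. You claim that $\nabla\Delta^{-1}\div\div$ is Calder\'on--Zygmund on $C^{N+\alpha}$; it is not. Its Fourier symbol is $i\xi_m\xi_j\xi_k/|\xi|^2$, which is homogeneous of degree $+1$, so the operator is of order $1$ and the correct mapping property is $\|\nabla q\|_{N+\alpha}\lesssim \|v_\ell\otimes w + w\otimes v_j\|_{N+1+\alpha} + \|R_\ell\|_{N+1+\alpha}$. Expanding the first term by the product rule produces $\|v_\ell\|_0\,\|w\|_{N+1+\alpha}$, and since $\|v_\ell\|_0\sim 1$ (not $\sim\tau_q^{-1}$), this puts one derivative too many on $w$ with an unfavorable coefficient; the Gronwall argument for $\|w\|_{N+\alpha}$ then does not close. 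The issue is not cosmetic: without some additional structure the pressure genuinely costs a derivative on $w$.

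The paper avoids this via the commutation identity (\ref{eq:ident_P1}): for divergence-free $X,Y$ one has $\div(X\cdot\nabla Y)=\div(Y\cdot\nabla X)$, hence $\mathcal{P}_1(X\cdot\nabla Y)=\mathcal{P}_1(Y\cdot\nabla X)$ with $\mathcal{P}_1=\nabla\Delta^{-1}\div$ (a genuine degree-$0$ CZ operator). This lets one rewrite the problematic term $\mathcal{P}_1(v_\ell\cdot\nabla w)$ as $\mathcal{P}_1(w\cdot\nabla v_\ell)$, so the pressure is expressed via (\ref{eq:vl_vj_subtract2}) entirely in the form $\mathcal{P}_1\big(w\cdot\nabla v_{\ell,j}\big)+\mathcal{P}_1\Div R_\ell$, i.e.\ with $w$ appearing undifferentiated. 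The resulting bound is $\|\nabla q\|_{N+\alpha}\lesssim \|w\|_0\|\nabla v_{\ell,j}\|_{N+\alpha}+\|w\|_{N+\alpha}\|\nabla v_{\ell,j}\|_0+\|R_\ell\|_{N+1+\alpha}$, whose Gronwall-relevant coefficient is $\|\nabla v_\ell\|_0\sim\tau_q^{-1}\ell^{2\alpha}$, so the argument closes on the interval of length $\lesssim\tau_q$. You need to insert this swap (or an equivalent maneuver, such as first expanding $\div\div(v_\ell\otimes w)=\nabla v_\ell\!:\!\nabla^T w$ and applying the correct Schauder estimate, combined with the identity $\div(v_\ell\cdot\nabla w)=\div(w\cdot\nabla v_\ell)$) before the pressure step. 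The rest of your outline --- the decomposition, the Gronwall/bootstrap over all $N$, moving $\nu(-\Delta)^\gamma w$ to the right and controlling it through (\ref{eq:diss_1}), and deducing (\ref{eq:vtransportestimate-1}) from (\ref{eq:vlestimate}) --- matches the paper's strategy and would be fine once the pressure estimate is fixed.
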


\begin{proof}
We observe that
\begin{align}
\left(\partial_{t}+v_{\ell}\cdot\nabla+\nu\left(-\Delta\right)^{\gamma}\right)\left(v_{\ell}-v_{j}\right) & =-\left(v_{\ell}-v_{j}\right)\cdot\nabla v_{j}-\nabla\left(p_{\ell}-p_{j}\right)+\Div R_{\ell},\label{eq:vl_vj_subtract}
\end{align}
and
\begin{align}
\nabla\left(p_{\ell}-p_{j}\right) & =\mathcal{P}_{1}\left(-\left(v_{\ell}-v_{j}\right)\cdot\nabla v_{\ell}-\left(v_{\ell}-v_{j}\right)\cdot\nabla v_{j}+\Div R_{\ell}\right),\label{eq:vl_vj_subtract2}
\end{align}
where $\mathcal{P}_{1}$ is as defined in \Appref{Geometric-preliminaries}, and (\ref{eq:ident_P1}) was implicitly used.

Then, as usual, (\ref{eq:vestimate-1}) and (\ref{eq:vlestimate})
follow from Gronwall and modified transport estimates exactly as in
\cite[Proposition 5.3]{derosaInfinitelyManyLeray2019} (which
in turn mirrors \cite[Proposition 3.3]{buckmasterOnsagerConjectureAdmissible2017}).

To derive (\ref{eq:vtransportestimate-1}) from (\ref{eq:vlestimate}),
we observe that
\begin{align}
\|(-\Delta)^{\gamma}(v_{j}-v_{\ell})\|_{N+\alpha} & \lesssim\|v_{j}-v_{\ell}\|_{N+2\alpha+2\gamma}\nonumber \\
 & \lesssim\epsilon_{q}\tau_{q}\delta_{q+1}\ell^{-1-2\gamma-N}\nonumber \\
 & \lesssim\epsilon_{q}\delta_{q+1}\ell^{-N-1+\alpha}\label{eq:new_in_glue}
\end{align}
 where the last inequality comes from (\ref{eq:diss_1}).
\end{proof}
We have proven (\ref{eq:glue1}) for any $t\in\left(t_{j},t_{j}+2\tau_{q}\right)$.

Now we define the potentials $z_{j}:=\mathcal{B}v_{j}$, $z_{\ell}:=\mathcal{B}v_{\ell}$,
where $\mathcal{B}$ is as defined in \Appref{Geometric-preliminaries}.
\begin{prop}
\label{prop:zglue}For $N\in\mathbb{N}_{0}$ and $t\in\left(t_{j},t_{j}+2\tau_{q}\right)$:
\begin{align}
\|z_{j}-z_{\ell}\|_{N+\alpha} & \lesssim_{N}\epsilon_{q}\tau_{q}\delta_{q+1}\ell^{-N+\alpha}\label{eq:zestimate}\\
\|(\dd_{t}+v_{\ell}\cdot\grad+\nu\left(-\Delta\right)^{\gamma})\left(z_{j}-z_{\ell}\right)\|_{N+\alpha} & \lesssim_{N}\epsilon_{q}\delta_{q+1}\ell^{-N+\alpha}\label{eq:ztransport_lambda_est}\\
\|D_{t,\ell}\left(z_{j}-z_{\ell}\right)\|_{N+\alpha} & \lesssim_{N}\epsilon_{q}\delta_{q+1}\ell^{-N+\alpha}\label{eq:ztransportestimate}
\end{align}
\end{prop}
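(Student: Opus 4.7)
The plan is to mirror the proof of \Propref{vglue} one level up in regularity, exploiting the fact that $\mathcal{B}$ is an order $-1$ Fourier multiplier (and hence gains one derivative), together with commutator estimates to handle transport.

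First I would establish (\ref{eq:zestimate}) as an immediate consequence of (\ref{eq:vestimate-1}): since $\mathcal{B}$ gains one derivative in H\"older norms (as a Calderón--Zygmund-type inverse of $\curl$ / antidivergence, cf.\ \Appref{Geometric-preliminaries}), we get
\begin{equation*}
\|z_{j}-z_{\ell}\|_{N+\alpha} \lesssim_{N} \|v_{j}-v_{\ell}\|_{N-1+\alpha} \lesssim_{N} \epsilon_{q}\tau_{q}\delta_{q+1}\ell^{-N+\alpha}.
\end{equation*}

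Next, for (\ref{eq:ztransport_lambda_est}), note that $\mathcal{B}$ commutes with $\partial_{t}$ (it is time-independent) and with $(-\Delta)^{\gamma}$ (both are Fourier multipliers), but not with $v_{\ell}\cdot\nabla$. Writing $L := \partial_t + v_\ell\cdot\nabla + \nu(-\Delta)^\gamma$, I would compute
\begin{equation*}
L(z_{j}-z_{\ell}) = \mathcal{B}\bigl[L(v_{j}-v_{\ell})\bigr] + \bigl[v_{\ell}\cdot\nabla,\,\mathcal{B}\bigr](v_{j}-v_{\ell}).
\end{equation*}
The first term is estimated by applying $\mathcal{B}$ (gaining one derivative) to the bound (\ref{eq:vlestimate}), which produces exactly $\epsilon_{q}\delta_{q+1}\ell^{-N+\alpha}$. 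For the commutator, I would invoke the standard Calder\'on commutator estimate
\begin{equation*}
\bigl\|[v_\ell\cdot\nabla,\mathcal{B}]w\bigr\|_{N+\alpha} \lesssim_N \|\nabla v_\ell\|_0\|w\|_{N+\alpha} + \|\nabla v_\ell\|_{N+\alpha}\|w\|_0,
\end{equation*}
applied with $w=v_{j}-v_{\ell}$. Using (\ref{eq:grad_v_l}) and (\ref{eq:vestimate-1}), each term is bounded by a power of $\delta_q^{1/2}\lambda_q$ times $\epsilon_q\tau_q\delta_{q+1}\ell^{-N+\alpha}$; the factor $\tau_q\delta_q^{1/2}\lambda_q\ell^{-\alpha}$ (or $\tau_q\delta_q^{1/2}\lambda_q$) is absorbed using (\ref{eq:tau_l}), converting $\tau_q$ into $\ell^{\alpha}$ and thus into a benign factor, which lets the estimate collapse to $\epsilon_q\delta_{q+1}\ell^{-N+\alpha}$.

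Finally, (\ref{eq:ztransportestimate}) follows from (\ref{eq:ztransport_lambda_est}) by estimating the dissipative contribution exactly as in (\ref{eq:new_in_glue}): since $\mathcal{B}$ commutes with $(-\Delta)^{\gamma}$,
\begin{equation*}
\|(-\Delta)^{\gamma}(z_{j}-z_{\ell})\|_{N+\alpha} \lesssim \|z_{j}-z_{\ell}\|_{N+2\alpha+2\gamma} \lesssim \epsilon_q\tau_q\delta_{q+1}\ell^{-N-2\gamma+\alpha} \lesssim \epsilon_q\delta_{q+1}\ell^{-N+\alpha},
\end{equation*}
where the last step uses (\ref{eq:diss_1}). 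The main obstacle I anticipate is controlling the commutator term in (\ref{eq:ztransport_lambda_est}) sharply; in particular, ensuring the low-frequency factor of $v_\ell$ (bounded by (\ref{eq:grad_v_l})) combines with $\tau_q$ to produce only an $\ell^{\alpha}$ loss rather than a larger one — this is precisely what (\ref{eq:tau_l}) and the separation of scales (\ref{eq:length_Freq}) are designed to accommodate.
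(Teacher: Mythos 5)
The spirit of your reduction---$\mathcal{B}$ is order $-1$, so $z$-estimates should be $v$-estimates ``one level up''---works for $N\geq1$ but has a genuine gap at $N=0$, which is exactly the case the whole argument hinges on (it is what feeds Gronwall). The heuristic $\|z_j-z_\ell\|_{N+\alpha}\lesssim\|v_j-v_\ell\|_{N-1+\alpha}$ requires $N\geq1$; for $N=0$ the best direct bound is $\|z_j-z_\ell\|_{\alpha}\lesssim\|v_j-v_\ell\|_{\alpha}\lesssim\epsilon_q\tau_q\delta_{q+1}\ell^{-1+\alpha}$, a full factor of $\ell^{-1}$ short of what (\ref{eq:zestimate}) demands. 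The same defect propagates to your treatment of (\ref{eq:ztransport_lambda_est}): the term $\mathcal{B}\left[L(v_j-v_\ell)\right]$ yields $\epsilon_q\delta_{q+1}\ell^{-N-1+\alpha}$ at $N=0$, and the commutator estimate you invoke,
\[
\bigl\|[v_\ell\cdot\nabla,\mathcal{B}]w\bigr\|_{N+\alpha}\lesssim\|\nabla v_\ell\|_0\|w\|_{N+\alpha}+\|\nabla v_\ell\|_{N+\alpha}\|w\|_0,
\]
is the order-$0$ (Calder\'on--Zygmund) estimate of \Lemref{singular_comm}, which does not apply verbatim to the order-$(-1)$ operator $\mathcal{B}$; plugging in (\ref{eq:vestimate-1}) (whose right-hand side is $\epsilon_q\tau_q\delta_{q+1}\ell^{-N-1+\alpha}$, not $\ell^{-N+\alpha}$ as you quoted) and absorbing $\tau_q\delta_q^{1/2}\lambda_q\lesssim\ell^{2\alpha}$ still leaves $\epsilon_q\delta_{q+1}\ell^{-N-1+O(\alpha)}$. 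To make your route work you would need, and would have to prove, a sharper commutator bound that explicitly records the one-derivative gain of $[v_\ell\cdot\nabla,\mathcal{B}]$ as an order-$(-1)$ operator, and you would still be stuck at $N=0$ for the first term.

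The paper avoids this entirely by not applying $\mathcal{B}$ to the $v$-equation. Instead, setting $\widetilde{z}=z_\ell-z_j$ (so $d\widetilde{z}=0$, $\sharp\delta\widetilde{z}=v_\ell-v_j$), it computes $\delta$ and $d$ of the transport quantity using the pointwise identities
\[
\nabla_X\delta\omega=\delta\nabla_X\omega+\Div(\nabla X*\omega),\qquad
(\delta\omega)\cdot\nabla X=\Div(\nabla X*\omega),\qquad
[d,\nabla_X]\omega=\Div(\nabla X*\omega),
\]
and observes that every derivative-losing term comes wrapped inside a $\Div$. Inverting by Hodge (the pressure $d(p_\ell-p_j)$ drops out under $(-\Delta)^{-1}d$) gives
\[
D_{t,\ell}\widetilde{z}+\nu(-\Delta)^\gamma\widetilde{z}
=(-\Delta)^{-1}d\,\Div\bigl(\nabla v_{j,\ell}*\widetilde{z}+R_\ell\bigr)+(-\Delta)^{-1}\delta\,\Div\bigl(\nabla v_\ell*\widetilde{z}\bigr),
\]
where $(-\Delta)^{-1}d\,\Div$ and $(-\Delta)^{-1}\delta\,\Div$ are bounded Calder\'on--Zygmund operators and the right side involves only $\widetilde{z}$, not $\nabla\widetilde{z}$---this is the true source of the derivative gain. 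The $N=0$ estimate then closes via the modified transport estimate plus Gronwall, and $N\geq1$ and (\ref{eq:ztransportestimate}) follow exactly as you describe. So your reduction of (\ref{eq:ztransportestimate}) to (\ref{eq:ztransport_lambda_est}) and the $N\geq1$ cases are fine; the missing idea is the structural ``everything-is-a-divergence'' identity, not a crude commutator bound.
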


\begin{proof}
First, we note that for any divergence-free vector field $X$ and
2-form $\omega$, we have 
\begin{align*}
X^{i}\partial_{i}\partial^{j}\omega_{jk} & =\partial^{j}\left(X^{i}\partial_{i}w_{jk}\right)-\partial_{i}\left(\partial^{j}X^{i}\omega_{jk}\right)\\
\left(\partial^{j}\omega_{jk}\right)\partial^{k}X^{i} & =\partial^{j}\left(\omega_{jk}\partial^{k}X^{i}\right)\\
\left[d,\nabla_{X}\right]\omega & =dx^{i}\wedge\partial_{i}\nabla_{X}\omega-\nabla_{X}\left(dx^{i}\wedge\partial_{i}\omega\right)\\
 & =dx^{i}\wedge\left(\partial_{i}X^{j}\right)\left(\partial_{j}\omega\right)=\partial_{j}\left(\left(\partial_{i}X^{j}\right)dx^{i}\wedge\omega\right).
\end{align*}
Because we only care about estimates instead of how the indices contract,
we can write in schematic notation (neglecting indices and linear
combinations): 
\begin{align*}
\nabla_{X}\delta\omega & =\delta\left(\nabla_{X}\omega\right)+\Div\left(\nabla X*\omega\right)\\
\left(\delta\omega\right)\cdot\nabla X & =\Div\left(\nabla X*\omega\right)\\
\left[d,\nabla_{X}\right]\omega & =\Div\left(\nabla X*\omega\right)
\end{align*}

Define $\widetilde{z}:=z_{\ell}-z_{j}$. Then we have $d\widetilde{z}=0$
and $\sharp\delta\widetilde{z}=v_{\ell}-v_{j}$. From (\ref{eq:vl_vj_subtract})
and the schematic identities above, we have,
\begin{align*}
\delta\left(\partial_{t}\widetilde{z}+\nabla_{v_{\ell}}\widetilde{z}+\nu \left(-\Delta\right)^{\gamma}\widetilde{z}\right) & =\Div\left(\nabla v_{j,\ell}*\widetilde{z}\right)-d\left(p_{\ell}-p_{j}\right)+\Div R_{\ell}\\
d\left(\partial_{t}\widetilde{z}+\nabla_{v_{\ell}}\widetilde{z}+\nu  \left(-\Delta\right)^{\gamma}\widetilde{z}\right) & =\Div\left(\nabla v_{\ell}*\widetilde{z}\right),
\end{align*}
and thus 
\begin{align*}
\partial_{t}\widetilde{z}+\nabla_{v_{\ell}}\widetilde{z}+\nu \left(-\Delta\right)^{\gamma}\widetilde{z} & =\left(-\Delta\right)^{-1}d\circ\Div\left(\nabla v_{j,\ell}*\widetilde{z}+R_{\ell}\right)+\left(-\Delta\right)^{-1}\delta\circ\Div\left(\nabla v_{\ell}*\widetilde{z}\right),
\end{align*}
where $v_{j,\ell}$ could be $v_{j}$ or $v_{\ell}$ (they obey the
same estimates by \Lemref{local_existence}). As $\left(-\Delta\right)^{-1}d\circ\Div$
and $\left(-\Delta\right)^{-1}\delta\circ\Div$ are Calderón-Zygmund
operators, we have 
\begin{align}
&\left\Vert \left(D_{t,\ell}+\nu\left(-\Delta\right)^{\gamma}\right)\widetilde{z}\right\Vert _{N+\alpha}\nonumber \\
&\hspace{0.2in} \lesssim\left\Vert \nabla v_{j,\ell}\right\Vert _{N+\alpha}\left\Vert \widetilde{z}\right\Vert _{\alpha}+\left\Vert \nabla v_{j,\ell}\right\Vert _{\alpha}\left\Vert \widetilde{z}\right\Vert _{N+\alpha}+\left\Vert R_{\ell}\right\Vert _{N+\alpha}\nonumber \\
&\hspace{0.2in} \lesssim\ell^{-N-\alpha}\lambda_{q}\delta_{q}^{\frac{1}{2}}\left\Vert \widetilde{z}\right\Vert _{\alpha}+\ell^{-\alpha}\lambda_{q}\delta_{q}^{\frac{1}{2}}\left\Vert \widetilde{z}\right\Vert _{N+\alpha}+\ell^{-N+\alpha}\epsilon_{q}\delta_{q+1}\nonumber \\
&\hspace{0.2in} \lesssim\ell^{-N+\alpha}\tau_{q}^{-1}\left\Vert \widetilde{z}\right\Vert _{\alpha}+\ell^{\alpha}\tau_{q}^{-1}\left\Vert \widetilde{z}\right\Vert _{N+\alpha}+\ell^{-N+\alpha}\epsilon_{q}\delta_{q+1}\label{eq:Lztilde}
\end{align}
where we have used (\ref{eq:tau_l}) to pass to the last line.

By the modified transport estimate in \cite[Proposition 3.3]{derosaInfinitelyManyLeray2019},
we also have
\begin{align}
\left\Vert \widetilde{z}\left(t\right)\right\Vert _{\alpha} & \lesssim\int_{t_{j}}^{t}\left\Vert \left(D_{t,\ell}+\nu\left(-\Delta\right)^{\gamma}\right)\widetilde{z}\left(s\right)\right\Vert _{\alpha}\;\mathrm{d}s\label{eq:modified_transpot}\\
 & \lesssim\ell^{\alpha}\tau_{q}^{-1}\int_{t_{j}}^{t}\left\Vert \widetilde{z}(s)\right\Vert _{\alpha}\;\mathrm{d}s+\epsilon_{q}^{2}\tau_{q}\delta_{q+1}\ell^{\alpha}\nonumber 
\end{align}
By Gronwall, we obtain (\ref{eq:zestimate}) for $N=0$. For $N\geq1,$
we observe that 
\begin{align*}
\left\Vert z_{j}-z_{\ell}\right\Vert _{N+\alpha}&\lesssim\left\Vert \nabla\left(z_{j}-z_{\ell}\right)\right\Vert _{N-1+\alpha}\\
&=\left\Vert \nabla\mathcal{B}\left(v_{j}-v_{\ell}\right)\right\Vert _{N-1+\alpha}\\
&\lesssim\left\Vert v_{j}-v_{\ell}\right\Vert _{N-1+\alpha},
\end{align*}
where we have implicitly used the facts that  $\nabla\mathcal{B}$
is Calderón-Zygmund, and that $\left\Vert f\right\Vert _{L^{\infty}}\lesssim\left\Vert \nabla f\right\Vert _{L^{\infty}}$
for any mean-zero $f\in C^{1}\left(\mathbb{T}^{d}\right)$ (Poincaré
inequality). Then by (\ref{eq:vestimate-1}), we obtain (\ref{eq:zestimate}).  From here, we note that (\ref{eq:Lztilde}) and (\ref{eq:zestimate}) imply (\ref{eq:ztransport_lambda_est}).

It remains to show (\ref{eq:ztransportestimate}).  For this, we argue as in (\ref{eq:new_in_glue}) and use (\ref{eq:diss_1}) to write
\begin{align*}
\|(-\Delta)^{\gamma}(z_{j}-z_{\ell})\|_{N+\alpha} & \lesssim\|z_{j}-z_{\ell}\|_{N+2\alpha+2\gamma}\\
&\lesssim\epsilon_{q}\tau_{q}\delta_{q+1}\ell^{-2\gamma-N}\\
&\lesssim\epsilon_{q}\delta_{q+1}\ell^{-N+\alpha},
\end{align*}
as desired.
\end{proof}
Combining (\ref{eq:cutoff_time_deri}), (\ref{eq:zestimate}) and
(\ref{eq:vestimate-1}), as well as the boundedness of the Calderón-Zygmund
operator $\mathcal{R}\delta$, we obtain
\begin{align}
\|\dd_{t}\chi_{j}^{b}\mathcal{R}(v_{j}-v_{j+1})\|_{N+\alpha}=\|\dd_{t}\chi_{j}^{b}\mathcal{R}\delta(z_{j}-z_{j+1})\|_{N+\alpha} & \lesssim_{N}\delta_{q+1}\ell^{-N+\alpha}\label{eq:1stRv},
\end{align}
and
\begin{align}
\|\chi_{j}^{b}(1-\chi_{j}^{b})(v_{j}-v_{j+1})\otimes(v_{j}-v_{j+1})\|_{N+\alpha} & \lesssim_{N}(\epsilon_{q}\tau_{q}\delta_{q+1}\ell^{-1+\alpha})^{2}\ell^{-N},\label{eq:2ndRV}
\end{align}
for $N\in\mathbb{N}_{0}$ and $t\in\left(t_{j+1},t_{j+1}+\epsilon_{q}\tau_{q}\right)$.

Before we proceed, we will need a usual singular-integral commutator
estimate from \cite{buckmasterOnsagerConjectureAdmissible2017}
to handle the Calderón-Zygmund operator $\mathcal{R}\curl$.
\begin{lem}[Proposition D.1 in \cite{buckmasterOnsagerConjectureAdmissible2017}]
\label{lem:singular_comm}Let $\alpha\in\left(0,1\right),N\in\mathbb{N}_{0}$,
$T$ be a Calderón-Zygmund operator and $b\in C^{N+1,\alpha}$ be
a divergence-free vector field on $\mathbb{T}^{d}$. Then for any
$f\in C^{N+\alpha}\left(\mathbb{T}^{d}\right)$, we have
\[
\left\Vert \left[T,b\cdot\nabla\right]f\right\Vert _{N+\alpha}\lesssim_{N,\alpha,T}\left\Vert b\right\Vert _{1+\alpha}\left\Vert f\right\Vert _{N+\alpha}+\left\Vert b\right\Vert _{N+1+\alpha}\left\Vert f\right\Vert _{\alpha}
\]
\end{lem}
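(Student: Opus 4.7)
The starting point is to exploit $\Div b = 0$: since $b\cdot\nabla f = \Div(bf)$ and $b\cdot\nabla(Tf) = \Div(b\,Tf)$, and since a translation-invariant Calder\'on--Zygmund operator $T$ commutes with $\Div$, we obtain the identity
\[
[T, b\cdot\nabla]f = \Div\bigl([T, b]f\bigr),
\]
where $[T, b]g := T(bg) - b\,Tg$ is the standard multiplication commutator. Because taking a divergence costs exactly one derivative in H\"older norms, the lemma reduces to the Calder\'on-type multiplication commutator estimate
\[
\|[T, b]f\|_{N+1+\alpha} \lesssim_{N,\alpha,T} \|b\|_{1+\alpha}\|f\|_{N+\alpha} + \|b\|_{N+1+\alpha}\|f\|_{\alpha}.
\]

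\textbf{Base case $N=0$.} I would represent $T$ via its principal value kernel $K$ and write
\[
[T,b]f(x) = \mathrm{p.v.}\int_{\mathbb{T}^{d}} K(x-y)\bigl(b(y)-b(x)\bigr) f(y)\, dy,
\]
so the factor $b(y)-b(x)$ absorbs one power of $|x-y|$ and the resulting kernel is only weakly singular of order $|x-y|^{1-d}$, yielding the $C^{0}$ bound with constant $\lesssim \|b\|_{1}\|f\|_{0}$. For the $C^{1+\alpha}$ seminorm I would differentiate under the integral, isolate the linear Taylor correction of $b$ about $x$ (contributing a classical Calder\'on--Zygmund operator acting on $f$ that is controlled in $C^{\alpha}$), and bound the remainder using $|b(y)-b(x)-\nabla b(x)\cdot(y-x)| \lesssim \|b\|_{1+\alpha}|x-y|^{1+\alpha}$; the cancellation of $K$ at the principal value is what saves a full derivative here.

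\textbf{Induction.} For $N \geq 1$ I would exploit the translation invariance of $T$, which gives the Leibniz-type identity
\[
\nabla\bigl([T,b]f\bigr) = [T, \nabla b]f + [T,b]\nabla f.
\]
Iterating yields an expansion of $\nabla^{N+1}([T,b]f)$ as a linear combination of $[T, \nabla^{i} b]\nabla^{N+1-i} f$ for $0 \leq i \leq N+1$. The two endpoints $i = N+1$ and $i = 0$ produce precisely the two terms on the right-hand side of the desired estimate after invoking the base case. The intermediate terms are absorbed by the standard interpolation
\[
\|g\|_{k+\alpha} \lesssim \|g\|_{1+\alpha}^{1-\theta}\|g\|_{N+1+\alpha}^{\theta},
\]
applied with complementary exponents $\theta$ and $1-\theta$ to $b$ and $f$ respectively, so that only the two extreme combinations appear in the final bound.

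\textbf{Main obstacle.} The delicate technical point is the base-case $C^{1+\alpha}$ estimate, where one must use the cancellation of the CZ kernel rather than any naive pointwise bound, since a crude estimate would cost an additional derivative on $b$. This is essentially the content of Calder\'on's commutator theorem; once that classical ingredient is in hand, the rest of the proof is a routine induction combined with H\"older interpolation.
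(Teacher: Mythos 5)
The paper does not prove this lemma; it is quoted verbatim as Proposition D.1 of Buckmaster--De Lellis--Sz\'ekelyhidi--Vicol and used as a black box, so there is no in-paper argument to compare against. I will therefore assess your argument on its own terms.

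Your reduction $[T,b\cdot\nabla]f=\Div([T,b]f)$ is correct and is the right move, provided $T$ is a convolution-type (Fourier-multiplier) CZ operator so that it commutes with partial derivatives. This is indeed the setting both here (all the operators $\mathcal R$, $\mathcal P_i$, $\mathcal B$, etc., are Fourier multipliers) and in BDLSV, but you should flag it explicitly since the word \emph{translation-invariant} does not appear in the hypotheses. The base-case sketch ($N=0$) correctly identifies the crux: differentiate under the integral, peel off $\nabla b(x)\cdot(y-x)$ so that the kernel $\partial_jK(z)z_i$ is again a degree-$(-d)$ CZ kernel acting on $f$, and estimate the Taylor remainder by $[b]_{1+\alpha}|x-y|^{1+\alpha}$; the $C^\alpha$ seminorm of the remainder integral is where the real work lies, exactly as you say.

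There is a small but genuine gap in the induction. Iterating the Leibniz identity $N+1$ times, as written, produces the endpoint term $[T,b]\nabla^{N+1}f$, and $\nabla^{N+1}f$ does not exist for $f\in C^{N+\alpha}$. The clean fix is to stop one step earlier: write
\[
\nabla^N\bigl([T,b]f\bigr)=\sum_{i=0}^{N}\binom{N}{i}\,[T,\nabla^i b]\,\nabla^{N-i}f,
\]
in which every term is classically defined since $\nabla^i b\in C^{N+1-i+\alpha}\subset C^{1+\alpha}$ and $\nabla^{N-i}f\in C^{i+\alpha}\subset C^{\alpha}$ for $0\le i\le N$, and then take the $C^{1+\alpha}$ norm of each summand using the base-case estimate $\|[T,g]h\|_{1+\alpha}\lesssim\|g\|_{1+\alpha}\|h\|_\alpha$ (applied with $g=\nabla^i b$, $h=\nabla^{N-i}f$). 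This yields $\|b\|_{i+1+\alpha}\|f\|_{N-i+\alpha}$, and the interpolation/Young step you describe then collapses these to the two endpoint products $\|b\|_{1+\alpha}\|f\|_{N+\alpha}$ and $\|b\|_{N+1+\alpha}\|f\|_\alpha$. With that adjustment the argument closes.
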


We are now able to establish the relevant estimates for $\overline{R}_q$.

\begin{prop}
\label{prop:glued_stress_est}$\overline{R}_{q}$ in (\ref{eq:glued_stress})
admits the bounds 
\begin{align}
\|\overline{R}_{q}\|_{N+\alpha} & \lesssim_{N}\delta_{q+1}\ell^{-N+\alpha}\label{eq:gluedR}\\
\|(\dd_{t}+\overline{v}_{q}\cdot\grad)\overline{R}_{q}\|_{N+\alpha} & \lesssim_{N}(\epsilon_{q}\tau_{q})^{-1}\delta_{q+1}\ell^{-N+\alpha}\label{eq:gluedRtransport}
\end{align}
for $N\in\mathbb{N}_{0}$ and $t\in\left(t_{j+1},t_{j+1}+\epsilon_{q}\tau_{q}\right)$.
\end{prop}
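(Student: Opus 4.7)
The plan is to split $\overline{R}_q$ into its two pieces
\[
A_j := \dd_t\chi_j^b \,\mathcal{R}(v_j - v_{j+1}), \qquad Q_j := -\chi_j^b(1-\chi_j^b)(v_j - v_{j+1})\otimes(v_j - v_{j+1}),
\]
and estimate each separately on the overlap window $[t_{j+1}, t_{j+1}+\epsilon_q\tau_q]$. The static bound (\ref{eq:gluedR}) is essentially already in hand: (\ref{eq:1stRv}) controls $A_j$ directly, while the bound (\ref{eq:2ndRV}) for $Q_j$ carries an extra factor $(\epsilon_q\tau_q\delta_{q+1}^{1/2}\ell^{-1+\alpha})^2\ell^{-N}$ that one absorbs into $\delta_{q+1}\ell^{-N+\alpha}$ using (\ref{eq:time-length}).

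The core of the proof is (\ref{eq:gluedRtransport}), for which I would decompose $D_{t,q} = D_{t,\ell} + (\overline{v}_q - v_\ell)\cdot\nabla$ and apply the product rule. For $D_{t,q}A_j$, the term in which $\dd_t$ hits $\dd_t\chi_j^b$ produces $\dd_t^2\chi_j^b\,\mathcal{R}\delta(z_j-z_{j+1})$, and (\ref{eq:cutoff_time_deri}) together with the Calderón-Zygmund boundedness of $\mathcal{R}\delta$ and (\ref{eq:zestimate}) yields $(\epsilon_q\tau_q)^{-2}\cdot\epsilon_q\tau_q\delta_{q+1}\ell^{-N+\alpha}$, which is precisely the target. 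For the term in which $D_{t,\ell}$ falls on $\mathcal{R}\delta(z_j-z_{j+1})$, I would use the commutator identity
\[
D_{t,\ell}\,\mathcal{R}\delta(z_j-z_{j+1}) = \mathcal{R}\delta\, D_{t,\ell}(z_j-z_{j+1}) + [v_\ell\cdot\nabla,\,\mathcal{R}\delta](z_j-z_{j+1}),
\]
bounding the first summand by (\ref{eq:ztransportestimate}) and Calderón-Zygmund boundedness, and the second via \Lemref{singular_comm} together with the interpolation estimate $\|v_\ell\|_{N+1+\alpha}\lesssim \delta_q^{1/2}\lambda_q\ell^{-N-\alpha}$ and (\ref{eq:zestimate}); the crucial identity $\tau_q\delta_q^{1/2}\lambda_q\lesssim \ell^{2\alpha}$ from (\ref{eq:tau_l}) converts the resulting factors into the allowed order. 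The remaining $(\overline{v}_q - v_\ell)\cdot\nabla\,\mathcal{R}\delta(z_j-z_{j+1})$ contribution is controlled by the Hölder product rule using (\ref{eq:glue1}) and absorbed into the target via (\ref{eq:time-length}).

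For $D_{t,q}Q_j$, the worst-looking term is where $\dd_t$ hits $\chi_j^b(1-\chi_j^b)$, contributing $(\epsilon_q\tau_q)^{-1}$ times the bound from (\ref{eq:2ndRV}); again (\ref{eq:time-length}) exchanges the quadratic smallness in $v_j-v_{j+1}$ for the factor $\delta_{q+1}\ell^{\alpha}$ required by the target. The remaining terms, in which $D_{t,q}$ differentiates one factor of $v_j-v_{j+1}$ in the tensor product, I would pair $D_{t,\ell}(v_j-v_{j+1})$ (controlled by (\ref{eq:vtransportestimate-1})) with the other factor (controlled by (\ref{eq:vestimate-1})), and separately handle the $(\overline{v}_q-v_\ell)\cdot\nabla(v_j-v_{j+1})$ contribution via the product rule using (\ref{eq:glue1}). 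The main obstacle is the commutator term $[v_\ell\cdot\nabla,\mathcal{R}\delta](z_j-z_{j+1})$ inside $D_{t,q}A_j$: its control rests on \Lemref{singular_comm} and the delicate balance (\ref{eq:tau_l}) between $\tau_q$, $\delta_q^{1/2}\lambda_q$, and $\ell$, without which there is no reason the Calderón-Zygmund commutator should meet the $(\epsilon_q\tau_q)^{-1}$ scaling demanded by the right-hand side of (\ref{eq:gluedRtransport}).
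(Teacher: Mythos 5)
Your proposal is correct and follows essentially the same route as the paper: split $\overline{R}_q$ into the antidivergence piece and the quadratic piece, pass from $D_{t,q}$ to $D_{t,\ell}$ plus a $(\overline{v}_q-v_\ell)\cdot\nabla$ remainder, use the time-cutoff bounds (\ref{eq:cutoff_time_deri}), the $z$- and $v$-estimates from Propositions \ref{prop:vglue} and \ref{prop:zglue}, Lemma \ref{lem:singular_comm} for the commutator $[v_\ell\cdot\nabla,\mathcal{R}\delta]$, and (\ref{eq:tau_l}) and (\ref{eq:time-length}) to close. One small slip: in your recollection of (\ref{eq:2ndRV}) the factor should be $(\epsilon_q\tau_q\delta_{q+1}\ell^{-1+\alpha})^2\ell^{-N}$, with $\delta_{q+1}$ rather than $\delta_{q+1}^{1/2}$, since each factor of $v_j-v_{j+1}$ already carries a full $\delta_{q+1}$ by (\ref{eq:vestimate-1}); this does not affect the conclusion.
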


\begin{proof}
We observe that (\ref{eq:1stRv}) and (\ref{eq:2ndRV}) imply
\[
\|\overline{R}_{q}\|_{N+\alpha}\lesssim_{N}\delta_{q+1}\ell^{-N+\alpha}(1+\epsilon_{q}\tau_{q}\delta_{q+1}\ell^{-1+\alpha})^{2},
\]
and then (\ref{eq:gluedR}) follows from (\ref{eq:time-length}).

On the other hand, we have 
\begin{align*}
\left\Vert \left(\partial_{t}+\nabla_{\overline{v}_{q}}\right)\overline{R}_{q}\right\Vert _{N+\alpha} & \leq\left\Vert D_{t,\ell}\overline{R}_{q}\right\Vert _{N+\alpha}+\left\Vert \nabla_{\overline{v}_{q}-v_{\ell}}\overline{R}_{q}\right\Vert _{N+\alpha}
\end{align*}
where 
\begin{align*}
D_{t,\ell}\overline{R}_{q} & =\left(\partial_{t}^{2}\chi_{j}^{b}\right)\mathcal{R}\delta\left(z_{j}-z_{j+1}\right)\\
 & +\left(\partial_{t}\chi_{j}^{b}\right)\mathcal{R}\delta D_{t,\ell}\left(z_{j}-z_{j+1}\right)+\left(\partial_{t}\chi_{j}^{b}\right)\left[v_{\ell}\cdot\nabla,\mathcal{R}\delta\right]\left(z_{j}-z_{j+1}\right)\\
 & +\partial_{t}\left(\left(\chi_{j}^{b}\right)^{2}-\chi_{j}^{b}\right)\left(v_{j}-v_{j+1}\right)\otimes\left(v_{j}-v_{j+1}\right)\\
 & +\left(\left(\chi_{j}^{b}\right)^{2}-\chi_{j}^{b}\right)\left(D_{t,\ell}\left(v_{j}-v_{j+1}\right)\otimes\left(v_{j}-v_{j+1}\right)\right.\\
&\hspace{1.8in}\left.+\left(v_{j}-v_{j+1}\right)\otimes D_{t,\ell}\left(v_{j}-v_{j+1}\right)\right)
\end{align*}

The term involving $\left[v_{\ell}\cdot\nabla,\mathcal{R}\delta\right]$
can be handled by \Lemref{singular_comm}. Then by (\ref{eq:cutoff_time_deri}),
(\ref{eq:gluedR}), \Propref[s]{vglue} and \ref{prop:zglue}, we
conclude 
\begin{align*}
\|(\dd_{t}+\overline{v}_{q}\cdot\grad)\overline{R}_{q}\|_{N+\alpha} & \lesssim_{N}\left(\epsilon_{q}\tau_{q}\right)^{-1}\delta_{q+1}\ell^{-N+\alpha}\\
&\hspace{0.4in}+\tau_{q}^{-1}\delta_{q+1}\ell^{-N+\alpha}\\
&\hspace{0.4in}+\epsilon_{q}\tau_{q}\delta_{q+1}^{2}\ell^{-2-N+2\alpha}
\end{align*}
which then yields (\ref{eq:gluedRtransport}) because of (\ref{eq:time-length}).
\end{proof}

\subsection{Good-bad interface}

Next we consider any pair of indices $i$ and $j$ such that $\chi_{i}^{g}\chi_{j}^{b}\not\equiv0$.
By construction, we observe that $\supp(\chi_{i}^{g}\chi_{j}^{b})$
lies in an interval of length $\sim\epsilon_{q}\tau_{q}$, where $R_{q}$
is 0.

Without loss of generality (i.e., depending on whether $\chi_{i}^{g}$
or $\chi_{j}^{b}$ comes first in time), in this interval $\overline{v}_{q}$
satisfies 
\[
\dd_{t}\overline{v}_{q}+\nu(-\Delta)^{\gamma}\overline{v}_{q}+\divop\overline{v}_{q}\otimes\overline{v}_{q}+\grad\overline{p}_{q}=\divop\overline{R}_{q}
\]
where 
\begin{equation}
\overline{R}_{q}=\dd_{t}\chi_{i}^{g}\mathcal{R}(v_{q}-v_{j})-\chi_{i}^{g}(1-\chi_{i}^{g})(v_{q}-v_{j})\otimes(v_{q}-v_{j})\label{eq:glued_stress-1}
\end{equation}
which is a perfect analogue of (\ref{eq:glued_stress}).

As before, we decompose 
\[
v_{q}-v_{j}=(v_{q}-v_{\ell})-(v_{j}-v_{\ell})
\]
The estimates for $v_{j}-v_{\ell}$ are exactly as above.  Turning to $v_{q}-v_{\ell}$, the relevant estimates are given
by the following result.
\begin{prop}
\label{prop:vglue-1}For $N\in\mathbb{N}_{0}$ and $t\in\mathcal{G}_{q}+B\left(0,\tau_{q}+\epsilon_{q}\tau_{q}\right)$:
\begin{align}
\|v_{q}-v_{\ell}\|_{N+\alpha} & \lesssim_{N}\epsilon_{q}\tau_{q}\delta_{q+1}\ell^{-N-1+\alpha}\label{eq:vestimate-1-1}\\
\|(\dd_{t}+v_{\ell}\cdot\grad+\nu \left(-\Delta\right)^{\gamma})\left(v_{q}-v_{\ell}\right)\|_{N+\alpha} & \lesssim_{N}\epsilon_{q}\delta_{q+1}\ell^{-N-1+\alpha}\label{eq:vlestimate-1}\\
\|D_{t,\ell}\left(v_{q}-v_{\ell}\right)\|_{N+\alpha} & \lesssim_{N}\epsilon_{q}\delta_{q+1}\ell^{-N-1+\alpha}\label{eq:vtransportestimate-1-1}
\end{align}
\end{prop}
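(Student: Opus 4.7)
The plan is to mirror the proof of Proposition~\ref{prop:vglue}, replacing $v_j$ with $v_q$ and exploiting the fact that the good region carries the improved bounds (\ref{eq:goodbounds}) and $R_q\equiv 0$. By (\ref{eq:time_interval}), $\mathcal{G}_q + B(0,\tau_q+\epsilon_q\tau_q) \subset \mathcal{G}_q + B(0,\epsilon_{q-1}\tau_{q-1})$, so on the region of interest $v_q$ solves the $(\nu,\gamma,T)$-fNS equations and obeys the improved regularity (\ref{eq:goodbounds}), while $v_\ell = \psi_\ell * v_q$ satisfies the fNSR equations with stress $R_\ell = v_\ell\otimes v_\ell - \psi_\ell*(v_q\otimes v_q)$ (using $\psi_\ell*R_q\equiv 0$).

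For (\ref{eq:vestimate-1-1}) no transport argument is required, because $\|v_q-v_\ell\|_{N+\alpha}$ can be bounded directly by mollification: since $\psi_\ell$ is a radial mollifier,
$$\|v_q - v_\ell\|_{N+\alpha} \lesssim \ell^{2-\alpha}\|v_q\|_{N+2} \lesssim \ell^{2-\alpha}\delta_{q-1}^{1/2}\lambda_{q-1}\ell_{q-1}^{-N-1},$$
via (\ref{eq:goodbounds}). Absorbing $\ell_{q-1}^{-1}$ into $\ell^{-1}$ through (\ref{eq:length_Freq}) and then invoking (\ref{eq:good_bad1}) (in the form $\delta_{q-1}^{1/2}\lambda_{q-1}\ell^{2-2\alpha}\lesssim \epsilon_q\tau_q\delta_{q+1}$) produces the claimed bound.

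For (\ref{eq:vlestimate-1}) I subtract the equations for $v_q$ and $v_\ell$ to write
$$\bigl(\partial_t + v_\ell\cdot\nabla + \nu(-\Delta)^\gamma\bigr)(v_q-v_\ell) = -(v_q-v_\ell)\cdot\nabla v_q - \nabla(p_q-p_\ell) - \divop R_\ell,$$
with the pressure recovered by the Leray projection exactly as in (\ref{eq:vl_vj_subtract2}). Each term is then estimated directly: the transport-type term by combining (\ref{eq:vestimate-1-1}) with (\ref{eq:goodbounds}), the pressure term via its Calder\'on--Zygmund representation, and the stress term by (\ref{eq:Rl_estimate}). The gain of $\tau_q^{-1}$ relative to (\ref{eq:vestimate-1-1}) is precisely what one expects in the absence of a time integration. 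Finally, (\ref{eq:vtransportestimate-1-1}) follows from (\ref{eq:vlestimate-1}) verbatim as in (\ref{eq:new_in_glue}): one absorbs the fractional-Laplacian contribution via $\|(-\Delta)^\gamma f\|_{N+\alpha}\lesssim \|f\|_{N+2\alpha+2\gamma}$ together with (\ref{eq:vestimate-1-1}) at order $N+2\alpha+2\gamma$ and the parameter bound (\ref{eq:diss_1}).

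The main obstacle is purely bookkeeping: one must verify that the swap $\ell_{q-1}^{-N}\lesssim \ell^{-N}$ enabled by (\ref{eq:length_Freq}) does not degrade the bounds, and that the gain built into (\ref{eq:good_bad1}) compensates precisely for using the good-region regularity of $v_q$ in place of the vanishing initial condition $v_j(t_j) = v_\ell(t_j)$ that drove the Gronwall argument in Proposition~\ref{prop:vglue}. Once these checks are in place, no new parameter constraints beyond those already collected at the start of Section~\ref{sec:Proof-of-iteration} are needed.
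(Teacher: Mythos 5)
Your argument follows essentially the same route as the paper's: (\ref{eq:vestimate-1-1}) is obtained directly by mollification using the good-region bounds (\ref{eq:goodbounds}) and the parameter inequality (\ref{eq:good_bad1}) (the paper uses $\|v_q-v_\ell\|_{N+\alpha}\lesssim\ell\|v_q\|_{N+1+\alpha}$ while you use the radial-mollifier gain $\ell^{2-\alpha}\|v_q\|_{N+2}$, but both reduce to the same comparison $\delta_{q-1}^{1/2}\lambda_{q-1}\ell^{2-2\alpha}\lesssim\epsilon_q\tau_q\delta_{q+1}$); (\ref{eq:vlestimate-1}) is read off directly from the subtracted equations with $R_q\equiv 0$ and the pressure handled via $\mathcal{P}_1$; and (\ref{eq:vtransportestimate-1-1}) follows by the same $(-\Delta)^\gamma$ absorption as in (\ref{eq:new_in_glue}) using (\ref{eq:diss_1}). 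Your observation that no Gronwall step is needed — because (\ref{eq:vestimate-1-1}) comes for free from the good-region regularity rather than from an initial condition as in Proposition~\ref{prop:vglue} — matches the paper's reasoning exactly.
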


\begin{proof}
By standard mollification estimates (cf. \cite[Lemma 2.1]{contiHPrincipleRigidityAlpha2009}), we have
\begin{align*}
\|v_{q}-v_{\ell}\|_{N+\alpha} & \lesssim_{N}\ell_{q}\|v_{q}\|_{N+1+\alpha}\\
 & \lesssim_{N}\delta_{q-1}^{\frac{1}{2}}\lambda_{q-1}\ell_{q}\ell_{q-1}^{-N-\alpha}\ll\delta_{q-1}^{\frac{1}{2}}\lambda_{q-1}\ell_{q}^{1-N-\alpha}\\
 & \lesssim\epsilon_{q}\tau_{q}\delta_{q+1}\ell_{q}^{-N-1+\alpha}
\end{align*}
where we used (\ref{eq:goodbounds}) to pass to the second line, and
(\ref{eq:good_bad1}) to pass to the last line. Thus (\ref{eq:vestimate-1-1})
is proven.

Then as $R_{q}=0$ on this temporal region, we have an analogue of
(\ref{eq:vl_vj_subtract}) and (\ref{eq:vl_vj_subtract2}), namely
\begin{align}
\left(\partial_{t}+v_{\ell}\cdot\nabla+\nu\left(-\Delta\right)^{\gamma}\right)\left(v_{\ell}-v_{q}\right) & =-\left(v_{\ell}-v_{q}\right)\cdot\nabla v_{q}-\nabla\left(p_{\ell}-p_{q}\right)+\Div R_{\ell}\label{eq:vl_vj_subtract-1}
\end{align}
and
\begin{align}
\nabla\left(p_{\ell}-p_{q}\right) & =\mathcal{P}_{1}\left(-\left(v_{\ell}-v_{q}\right)\cdot\nabla v_{\ell}-\left(v_{\ell}-v_{q}\right)\cdot\nabla v_{q}+\Div R_{\ell}\right)
\end{align}
Thus we can estimate $\left\Vert \nabla\left(p_{\ell}-p_{q}\right)\right\Vert _{N+\alpha}$
and then $$\|(\dd_{t}+v_{\ell}\cdot\grad+\nu\left(-\Delta\right)^{\gamma})\left(v_{q}-v_{\ell}\right)\|_{N+\alpha}$$
to obtain (\ref{eq:vlestimate-1}). We then argue as in (\ref{eq:new_in_glue})
(replacing $v_{j}$ by $v_{q}$) to obtain (\ref{eq:vtransportestimate-1-1}).
\end{proof}
Note that we have fully proven (\ref{eq:glue1}).

To proceed, we define the potentials $z_{q}:=\mathcal{B}v_{q},z_{\ell}:=\mathcal{B}v_{\ell}$.
By observing that \Propref{vglue-1} plays the exact same role as
\Propref{vglue}, and by arguing exactly as in \Propref{zglue} (replacing
$v_{j}$ with $v_{q}$, and $z_{j}$ with $z_{q}$) we obtain
\begin{align}
\|z_{q}-z_{\ell}\|_{N+\alpha} & \lesssim_{N}\epsilon_{q}\tau_{q}\delta_{q+1}\ell^{-N+\alpha}\label{eq:zestimate-1}\\
\|(\dd_{t}+v_{\ell}\cdot\grad+\nu\left(-\Delta\right)^{\gamma})\left(z_{q}-z_{\ell}\right)\|_{N+\alpha} & \lesssim_{N}\epsilon_{q}\delta_{q+1}\ell^{-N+\alpha}\label{eq:ztransport_lambda_est-1}\\
\|D_{t,\ell}\left(z_{q}-z_{\ell}\right)\|_{N+\alpha} & \lesssim_{N}\epsilon_{q}\delta_{q+1}\ell^{-N+\alpha}\label{eq:ztransportestimate-1}
\end{align}
 for any $N\in\mathbb{N}_{0}$ and $t\in\mathcal{G}_{q}+B\left(0,\tau_{q}+\epsilon_{q}\tau_{q}\right)$.

Then, as with (\ref{eq:1stRv}) and (\ref{eq:2ndRV}), we have 
\begin{align}
\|\dd_{t}\chi_{i}^{g}\mathcal{R}(v_{q}-v_{j})\|_{N+\alpha} & \lesssim_{N}\delta_{q+1}\ell^{-N+\alpha}\label{eq:1stRv-1}\\
\|\chi_{j}^{g}(1-\chi_{j}^{g})(v_{q}-v_{j})\otimes(v_{q}-v_{j})\|_{N+\alpha} & \lesssim_{N}(\epsilon_{q}\tau_{q}\delta_{q+1}\ell^{-1+\alpha})^{2}\ell^{-N}\label{eq:2ndRV-1}
\end{align}
 for any $N\in\mathbb{N}_{0}$ and $t\in\supp(\chi_{i}^{g}\chi_{j}^{b})$.

We also have the analogue of \Propref{glued_stress_est}. By making
the obvious replacements ($v_{j}-v_{j+1}$ with $v_{q}-v_{j}$, $z_{j}-z_{j+1}$
with $z_{q}-z_{j}$, and $\chi_{j}^{b}$ with $\chi_{i}^{g}$), we
have 
\begin{align}
\|\overline{R}_{q}\|_{N+\alpha} & \lesssim_{N}\delta_{q+1}\ell^{-N+\alpha}\label{eq:gluedR-1}\\
\|(\dd_{t}+\overline{v}_{q}\cdot\grad)\overline{R}_{q}\|_{N+\alpha} & \lesssim_{N}(\epsilon_{q}\tau_{q})^{-1}\delta_{q+1}\ell^{-N+\alpha}\label{eq:gluedRtransport-1}
\end{align}
 for any $N\in\mathbb{N}_{0}$ and $t\in\supp(\chi_{i}^{g}\chi_{j}^{b})$.

\section{\label{sec:Convex-integration-and}Perturbation estimates}

In this section, we prove \Propref{convex_int}, the perturbation result which was used in the proof of Proposition \ref{prop:iterative} (in Section \ref{sec:Proof-of-iteration}).  We begin by recalling the definition of the Mikado flows from \cite[Lemma 5.1]{buckmasterOnsagerConjectureAdmissible2017},
which is valid for any dimension $d\geq3$ (see also \cite[Section 4.1]{cheskidovSharpNonuniquenessNavierStokes2020}).  

For any compact subset $\mathcal{N}\subset\subset\mathcal{S}_{+}^{d\times d}$, there is a smooth vector field $W:\mathcal{N}\times\mathbb{T}^{d}\to\mathbb{R}^{d}$
such that 
\begin{align}
\Div_{\xi}W(R,\xi)\otimes W(R,\xi) & =0\label{eq:divf1}\\
\Div_{\xi}W(R,\xi) & =0\label{eq:divf2}\\
\dashint_{\mathbb{T}^{d}}W\left(R,\xi\right)\;\mathrm{d}\xi & =0\\
\dashint_{\mathbb{T}^{d}}W(R,\xi)\otimes W(R,\xi)\;\mathrm{d}\xi & =R
\end{align}
Unless otherwise noted, we set $\mathcal{N}=\overline{B_{1/2}(\textrm{Id)}}$.

By Fourier decomposition we have 
\begin{align*}
W\left(R,\xi\right) & =\sum_{k\in\mathbb{Z}^{d}\backslash\{0\}}a_{k}\left(R\right)e^{i2\pi\left\langle k,\xi\right\rangle },
\end{align*}
and
\begin{align*}
W(R,\xi)\otimes W(R,\xi) & =R+\sum_{k\in\mathbb{Z}^{d}\backslash\{0\}}C_{k}\left(R\right)e^{i2\pi\left\langle k,\xi\right\rangle },
\end{align*}
where $a_{k}\left(R\right)$ and $C_{k}\left(R\right)$ are smooth
in $R$, and with derivatives rapidly decaying in $k$. Furthermore,
(\ref{eq:divf1}) and (\ref{eq:divf2}) imply
\begin{align}
k\cdot a_{k}\left(R\right) & =0\label{eq:k_ak}
\end{align}
and
\begin{align}
k^{\flat}\lrcorner C_{k}\left(R\right) & =0.\label{eq:kflat_c}
\end{align}

Now we recall the identity 
\[
v\lrcorner\left(\alpha\wedge\beta\right)=\left(v\lrcorner\alpha\right)\wedge\beta-\alpha\wedge\left(v\lrcorner\beta\right)
\]
for any vector field $v$, $1$-form $\alpha$, and differential form
$\beta$. This implies 
\begin{align}
\Div{}_{\xi}\left(\frac{k\wedge a_{k}}{i2\pi\left|k\right|^{2}}e^{i2\pi\left\langle k,\xi\right\rangle }\right) & =-\sharp\delta_{\xi}\left(\frac{k^{\flat}\wedge a_{k}^{\flat}}{i2\pi\left|k\right|^{2}}e^{i2\pi\left\langle k,\xi\right\rangle }\right)\label{eq:freeze_codiff}\\
 & =\sharp\left(e^{i2\pi\left\langle k,\xi\right\rangle }i2\pi k\right)\lrcorner\left(\frac{k^{\flat}\wedge a_{k}^{\flat}}{i2\pi\left|k\right|^{2}}\right)=a_{k}e^{i2\pi\left\langle k,\xi\right\rangle }\label{eq:freeze_2}
\end{align}
where $k\wedge a_{k}$ is an alternating $(2,0)$-tensor dual to $k^{\flat}\wedge a_{k}^{\flat}$.
Note that we implicitly used (\ref{eq:k_ak}).

To handle the transport error later and generalize the ``vector calculus''
to higher dimensions, we also introduce a local-time version of Lagrangian
coordinates.\footnote{The formalism is discussed in Tao's lecture notes, which can be found
at \url{https://terrytao.wordpress.com/2019/01/08/255b-notes-2-onsagers-conjecture/}}
\begin{defn}[Lagrangian coordinates]
We define the backwards transport flow $\Phi_{i}$ as the solution
to 
\begin{align*}
\left(\partial_{t}+\overline{v}_{q}\cdot\nabla\right)\Phi_{i} & =0\\
\Phi_{i}\left(t_{i},\cdot\right) & =\mathrm{Id}_{\mathbb{T}^{d}}
\end{align*}
Then as in \cite[Proposition 3.1]{buckmasterOnsagerConjectureAdmissible2017},
for any $N\geq2$ and $\left|t-t_{i}\right|\lesssim\tau_{q}$:
\begin{align}
\left\Vert \nabla\Phi_{i}\left(t\right)-\mathrm{Id}\right\Vert _{0} & \lesssim\left|t-t_{i}\right|\left\Vert \nabla\overline{v}_{q}\right\Vert _{0}\lesssim\tau_{q}\delta_{q}^{\frac{1}{2}}\lambda_{q}=\lambda_{q}^{-3\alpha}\ll1\label{eq:phi_id}\\
\left\Vert \nabla^{N}\Phi_{i}\left(t\right)\right\Vert _{0} & \lesssim\left|t-t_{i}\right|\left\Vert \nabla^{N}\overline{v}_{q}\right\Vert _{0}\lesssim\lambda_{q}^{-3\alpha}\ell^{-N+1}\quad\label{eq:phi_id_2}
\end{align}
We also define the forward characteristic flow $X_{i}$ as the the
flow generated by $\overline{v}_{q}$: 
\begin{align*}
\partial_{\underline{t}}X_{i}\left(\underline{t},\underline{x}\right) & =\overline{v}_{q}\left(\underline{t},X_{i}\left(\underline{t},\underline{x}\right)\right)\\
X_{i}\left(t_{i},\cdot\right) & =\mathrm{Id}_{\mathbb{T}^{d}}
\end{align*}
Then $\partial_{\underline{t}}\left(\Phi_{i}\left(\underline{t},X_{i}\left(\underline{t},\underline{x}\right)\right)\right)=0$.
By defining their spacetime versions
\begin{align*}
\mathbf{\Phi}_{i}\left(t,x\right) & :=\left(t,\Phi_{i}\left(t,x\right)\right)\\
\mathbf{X}_{i}\left(\underline{t},\underline{x}\right) & :=\left(\underline{t},X_{i}\left(\underline{t},\underline{x}\right)\right)
\end{align*}
we can conclude $\mathbf{X}_{i}=\left(\mathbf{\Phi}_{i}\right)^{-1}$,
and that $\mathbf{X}_{i}$ maps from the Lagrangian spacetime $\left(\underline{t},\underline{x}\right)$
to the Eulerian spacetime $\left(t,x\right)$. 

Let $\vol$ be the standard volume form of the torus. Then $X_{i}\left(\underline{t}\right)^{*}\vol=\vol$
(volume-preserving)\footnote{$\partial_{\underline{t}}\left(X_{i}\left(\underline{t}\right)^{*}\vol\right)=X_{i}\left(\underline{t}\right)^{*}\left(\mathcal{L}_{\overline{v}_{q}\left(\underline{t}\right)}\vol\right)=X_{i}\left(\underline{t}\right)^{*}\left(\Div\overline{v}_{q}\left(\underline{t}\right)\vol\right)=0.$
See also (\ref{eq:pullback_lie}).} and 
\begin{equation}
X_{i}\left(\underline{t}\right)^{*}\left(\Div u\right)=\Div\left(X_{i}\left(\underline{t}\right)^{*}u\right)\label{eq:Div_commute}
\end{equation}
 for any vector field $u$.\footnote{$X_{i}\left(\underline{t}\right)^{*}\left(\Div u\right)\vol=X_{i}\left(\underline{t}\right)^{*}\left(\mathcal{L}_{u}\vol\right)=\mathcal{L}_{X_{i}\left(\underline{t}\right)^{*}u}X_{i}\left(\underline{t}\right)^{*}\vol=\Div\left(X_{i}\left(\underline{t}\right)^{*}u\right)\vol$}
\end{defn}

\subsection{Constructing the perturbation}
We now specify the key terms used to define our perturbation.  Set 
\[
\underline{R_{i}}:=\mathbf{X}_{i}^{*}\left(\textrm{Id}-\frac{\overline{R}_{q}}{\delta_{q+1}}\right)
\]
where we treat $\overline{R}_{q}$ as a $(2,0)$-tensor.  Indeed, we can write this more explicitly as
\begin{equation}
\underline{R_{i}}\circ\mathbf{\Phi}_{i}=\nabla\Phi_{i}\left(\textrm{Id}-\frac{\overline{R}_{q}}{\delta_{q+1}}\right)\nabla\Phi_{i}^{T}\label{eq:R_phiii}
\end{equation}
Note that, for $\left|t-t_{i}\right|\lesssim\tau_{q}$, we have 
\[
\underline{R_{i}}\circ\mathbf{\Phi}_{i}\in B_{1/2}(\textrm{Id)},
\]
because $\grad\Phi_{i}$ is close to $\textrm{Id}$ and $\left\Vert \frac{\overline{R}_{q}}{\delta_{q+1}}\right\Vert _{0}\lesssim\ell^{\alpha}$ by (\ref{eq:glue3}).

For each $i$ let $\rho_{i}$ be a smooth cutoff such that $\mathbf{1}_{\left[t_{i},t_{i}+\epsilon_{q}\tau_{q}\right]}\leq\rho_{i}\leq\mathbf{1}_{\left[t_{i}-\epsilon_{q}\tau_{q},t_{i}+2\epsilon_{q}\tau_{q}\right]}$
and satisfying the estimate
\[
\left\Vert \partial_{t}^{N}\rho_{i}\right\Vert _{0}\lesssim\left(\epsilon_{q}\tau_{q}\right)^{-N}\;\forall N\in\mathbb{N}_{0}
\]
We now define the perturbation 
\begin{align*}
w^{(o)} & :=\sum_{i}\delta_{q+1}^{1/2}\rho_{i}(t)\grad\Phi_{i}^{-1}W(\underline{R_{i}}\circ\mathbf{\Phi}_{i},\lambda_{q+1}\Phi_{i}).
\end{align*}

For $t\in\left[t_{i}-\epsilon_{q}\tau_{q},t_{i}+2\epsilon_{q}\tau_{q}\right]$,
in local-time Lagrangian coordinates with $$\underline{w^{(o)}}:=\mathbf{X}_{i}^{*}w^{(o)},$$
we have 
\begin{align*}
\underline{w^{(o)}} & =\delta_{q+1}^{1/2}\rho_{i}(\underline{t})W(\underline{R_{i}},\lambda_{q+1}\underline{x})\\
 & =\sum_{k\neq0}\underbrace{\delta_{q+1}^{1/2}\rho_{i}(\underline{t})a_{k}(\underline{R_{i}})}_{:=\underline{b_{i,k}}}e^{i2\pi\left\langle \lambda_{q+1}k,\underline{x}\right\rangle }=\sum_{k\neq0}\underline{b_{i,k}}e^{i2\pi\left\langle \lambda_{q+1}k,\underline{x}\right\rangle }
\end{align*}
and therefore, by defining $b_{i,k}:=\mathbf{\Phi}_{i}^{*}\underline{b_{i,k}}$
(zero-extended outside $\supp\rho_{i}$), we have
\[
w^{\left(o\right)}=\sum_{i}\sum_{k\neq0}b_{i,k}e^{i2\pi\left\langle \lambda_{q+1}k,\Phi_{i}\right\rangle }
\]

Now, for $t\in\left[t_{i}-\epsilon_{q}\tau_{q},t_{i}+2\epsilon_{q}\tau_{q}\right]$,
in local-time Lagrangian coordinates, we define the incompressibility
corrector 
\[
\underline{w^{\left(c\right)}}:=\sum_{k\neq0}\underbrace{\delta_{q+1}^{1/2}\rho_{i}(\underline{t})\Div_{\underline{x}}\left(\frac{k\wedge a_{k}\left(\underline{R_{i}}\right)}{i2\pi\lambda_{q+1}\left|k\right|^{2}}\right)}_{:=\underline{c_{i,k}}}e^{i2\pi\left\langle \lambda_{q+1}k,\underline{x}\right\rangle }=\sum_{k\neq0}\underline{c_{i,k}}e^{i2\pi\left\langle \lambda_{q+1}k,\underline{x}\right\rangle }.
\]

Because of (\ref{eq:freeze_2}) and the identity $$\Div\left(fv\right)=\nabla f\lrcorner v^{\flat}+f\Div v,$$ which holds
for any smooth function $f$ and vector field $v$, we have 
\begin{align*}
\underline{w^{(o)}}+\underline{w^{\left(c\right)}} & =\delta_{q+1}^{1/2}\rho_{i}(\underline{t})\sum_{k\neq0}e^{i2\pi\left\langle \lambda_{q+1}k,\underline{x}\right\rangle }\left(a_{k}(\underline{R_{i}})+\Div_{\underline{x}}\left(\frac{k\wedge a_{k}\left(\underline{R_{i}}\right)}{i2\pi\lambda_{q+1}\left|k\right|^{2}}\right)\right)\\
 & =\delta_{q+1}^{1/2}\rho_{i}(\underline{t})\sum_{k\neq0}\Div_{\underline{x}}\left(\frac{k\wedge a_{k}\left(\underline{R_{i}}\right)}{i2\pi\lambda_{q+1}\left|k\right|^{2}}e^{i2\pi\left\langle \lambda_{q+1}k,\underline{x}\right\rangle }\right)
\end{align*}
which is divergence-free, since $\Div\Div T=0$ for any alternating
$(2,0)$-tensor on the flat torus.

In Eulerian coordinates, we define $c_{i,k}:=\mathbf{\Phi}_{i}^{*}\underline{c_{i,k}}$
(zero-extended outside $\supp\rho_{i}$), as well as
\begin{align*}
w^{(c)} & :=\sum_{i}\sum_{k\neq0}c_{i,k}e^{i2\pi\left\langle \lambda_{q+1}k,\Phi_{i}\right\rangle }
\end{align*}
to obtain $\underline{w^{(c)}}=\mathbf{X}_{i}^{*}w^{(c)}$ for $t\in\left[t_{i}-\epsilon_{q}\tau_{q},t_{i}+2\epsilon_{q}\tau_{q}\right]$.

Because of (\ref{eq:Div_commute}), the full perturbation
\[
w_{q+1}:=w^{(o)}+w^{(c)}
\]
is divergence-free.

With these ingredients in place, we now define 
\[
v_{q+1}:=\overline{v}_{q}+w_{q+1}
\]
 and observe that
\begin{align*}
 & \partial_{t}v_{q+1}+\Div\left(v_{q+1}\otimes v_{q+1}\right)+\nu\left(-\Delta\right)^{\gamma}v_{q+1}\\
=\; & \left(\partial_{t}\overline{v}_{q}+\Div\left(\overline{v}_{q}\otimes\overline{v}_{q}\right)+\nu\left(-\Delta\right)^{\gamma}\overline{v}_{q}\right)+\Div\left(w_{q+1}\otimes w_{q+1}\right)\\
 & \quad+\partial_{t}w_{q+1}+\Div\left(\overline{v}_{q}\otimes w_{q+1}\right)+\Div\left(w_{q+1}\otimes\overline{v}_{q}\right)+\nu\left(-\Delta\right)^{\gamma}w_{q+1}\\
=\; & -\nabla\overline{p}_{q}+\Div\left(\overline{R}_{q}+w_{q+1}\otimes w_{q+1}\right)\\
 & \quad+D_{t,q}w_{q+1}+w_{q+1}\cdot\nabla\overline{v}_{q}+\nu\left(-\Delta\right)^{\gamma}w_{q+1}
\end{align*}
We can then define the final stress as 
\begin{align*}
R_{q+1} & :=R_{\mathrm{osc}}+R_{\mathrm{trans}}+R_{\mathrm{Nash}}+R_{\mathrm{dis}}\\
R_{\mathrm{osc}} & :=\mathcal{R}\Div\left(\overline{R}_{q}+w_{q+1}\otimes w_{q+1}\right)\\
R_{\mathrm{trans}} & :=\mathcal{R}D_{t,q}w_{q+1}\\
R_{\mathrm{Nash}} & :=\mathcal{R}\left(w_{q+1}\cdot\nabla\overline{v}_{q}\right)\\
R_{\mathrm{dis}} & :=\nu\mathcal{R}\left(-\Delta\right)^{\gamma}w_{q+1}
\end{align*}

\subsection{Perturbation estimates}

To establish \Propref{convex_int}, we now have to estimate the perturbation constructed in the previous subsection.  The desired bounds are established in the following
series of results.  

\begin{prop}
\label{prop:pertub_est}
Suppose $t\in\left[t_{i}-\epsilon_{q}\tau_{q},t_{i}+2\epsilon_{q}\tau_{q}\right]$ and $N\in\mathbb{N}_{0}$.  Then we have the following estimates,
\begin{align}
\left\Vert \nabla\Phi_{i}\right\Vert _{N}+\left\Vert \nabla\Phi_{i}^{-1}\right\Vert _{N} & \lesssim_{N}\ell^{-N}\label{eq:per_est1}\\
\left\Vert \underline{R_{i}}\circ\mathbf{\Phi}_{i}\right\Vert _{N} & \lesssim_{N}\ell^{-N}\label{eq:per_est2}\\
\left\Vert b_{i,k}\right\Vert _{N} & \lesssim_{N}\delta_{q+1}^{\frac{1}{2}}\ell^{-N}\left|k\right|^{-2d}\label{eq:per_est3}\\
\left\Vert c_{i,k}\right\Vert _{N} & \lesssim_{N}\delta_{q+1}^{\frac{1}{2}}\lambda_{q+1}^{-1}\ell^{-N-1}\left|k\right|^{-2d}\label{eq:per_est4}
\end{align}
along with their material derivative analogues,
\begin{align}
\left\Vert D_{t,q}\left(\nabla\Phi_{i}\right)\right\Vert _{N} & \lesssim_{N}\delta_{q}^{\frac{1}{2}}\lambda_{q}\ell^{-N}\label{eq:per_est5}\\
\left\Vert D_{t,q}\left(\underline{R_{i}}\circ\mathbf{\Phi}_{i}\right)\right\Vert _{N} & \lesssim_{N}\left(\epsilon_{q}\tau_{q}\right)^{-1}\ell^{-N+\alpha}\label{eq:per_est6}\\
\left\Vert D_{t,q}b_{i,k}\right\Vert _{N} & \lesssim_{N}\left(\epsilon_{q}\tau_{q}\right)^{-1}\delta_{q+1}^{1/2}\ell^{-N}\left|k\right|^{-2d}\label{eq:per_est7}\\
\left\Vert D_{t,q}c_{i,k}\right\Vert _{N} & \lesssim_{N}\left(\epsilon_{q}\tau_{q}\right)^{-1}\delta_{q+1}^{1/2}\lambda_{q+1}^{-1}\ell^{-N-1}\left|k\right|^{-2d}\label{eq:per_est8}
\end{align}
\end{prop}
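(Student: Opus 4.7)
The plan is to establish the eight estimates in the order stated, since each one feeds into the next. The hypothesis $t\in[t_i-\epsilon_q\tau_q, t_i+2\epsilon_q\tau_q]$ ensures $|t-t_i|\lesssim\tau_q$, so the flow bounds (\ref{eq:phi_id})--(\ref{eq:phi_id_2}) and \Propref{glue_est} are applicable throughout. For (\ref{eq:per_est1}) I would differentiate the Lagrangian transport equation $D_{t,q}\Phi_i=0$ in space to obtain
\[
D_{t,q}(\nabla\Phi_i) \;=\; -(\nabla\overline{v}_q)\,\nabla\Phi_i,
\]
and then iterate with $N$ additional spatial derivatives; after integrating in time from $t_i$ and applying Gronwall, the glued velocity control (\ref{eq:glue2}) yields the $\ell^{-N}$ bound. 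The bound on $\nabla\Phi_i^{-1}$ then follows from the Neumann series for $(\nabla\Phi_i)^{-1}$, valid because $\|\nabla\Phi_i-\mathrm{Id}\|_0\ll 1$ by (\ref{eq:phi_id}). The same ODE also yields (\ref{eq:per_est5}) directly.

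For (\ref{eq:per_est2}) I would apply the Leibniz rule for $C^N$-norms to the explicit formula (\ref{eq:R_phiii}): the three factors $\nabla\Phi_i$, $\mathrm{Id}-\overline{R}_q/\delta_{q+1}$, $\nabla\Phi_i^T$ are each controlled by $\ell^{-N}$ via (\ref{eq:per_est1}) and (\ref{eq:glue3}), using $\ell^{-N+\alpha}\lesssim\ell^{-N}$. The coefficient estimates (\ref{eq:per_est3})--(\ref{eq:per_est4}) then come from the explicit representation $b_{i,k}=\delta_{q+1}^{\frac12}\rho_i\,\nabla\Phi_i^{-1}\,a_k(\underline{R_i}\circ\mathbf{\Phi}_i)$ via the multivariate Faà di Bruno identity: the rapid decay $\|a_k\|_{C^M(\mathcal{N})}\lesssim_M|k|^{-2d}$ supplies the $|k|^{-2d}$ factor, while (\ref{eq:per_est1})--(\ref{eq:per_est2}) supply the $\ell^{-N}$ factor. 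For $c_{i,k}$, the prefactor $(2\pi\lambda_{q+1}|k|^2)^{-1}$ combined with the extra divergence in $\underline{x}$ accounts for the $\lambda_{q+1}^{-1}\ell^{-N-1}$ scaling.

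For the material derivative bounds (\ref{eq:per_est6})--(\ref{eq:per_est8}) I would differentiate the preceding expressions with $D_{t,q}$ via Leibniz, using the identities $D_{t,q}(\nabla\Phi_i)=-(\nabla\overline{v}_q)\nabla\Phi_i$ and $D_{t,q}(\nabla\Phi_i^{-1})=-\nabla\Phi_i^{-1}(D_{t,q}\nabla\Phi_i)\nabla\Phi_i^{-1}$. The contribution in which $D_{t,q}$ falls on $\overline{R}_q$ is handled by (\ref{eq:glue4}), while all remaining contributions are controlled by (\ref{eq:per_est5}). For (\ref{eq:per_est7})--(\ref{eq:per_est8}), I would additionally invoke $\|\partial_t\rho_i\|_0\lesssim(\epsilon_q\tau_q)^{-1}$ and apply (\ref{eq:per_est6}) inside $a_k'(\cdot)$ through the chain rule.

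The principal obstacle is a bookkeeping one in this last step: I must verify that the dominant scale of every contribution to $\|D_{t,q}F\|_N$ is $(\epsilon_q\tau_q)^{-1}\ell^{-N}$ (times the appropriate prefactor), rather than the potentially larger $\delta_q^{\frac12}\lambda_q\ell^{-N}$ scale arising from transport of $\nabla\Phi_i$. This reduces to confirming
\[
\tau_q\delta_q^{\frac12}\lambda_q \;=\; \lambda_q^{-3\alpha} \;\lesssim\; \epsilon_q\,\ell^{\alpha},
\]
which holds because $\epsilon_q=\lambda_q^{-\sigma}$ with $\sigma>0$, $\ell$ is a negative power of $\lambda_q$, and $\alpha$ is chosen sufficiently small relative to $\sigma$ by (\ref{eq:alpha_bound}). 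Once this is in hand, the transport terms arising from $D_{t,q}(\nabla\Phi_i)$ are subdominant, and (\ref{eq:per_est6})--(\ref{eq:per_est8}) fall out of the $(\epsilon_q\tau_q)^{-1}\ell^{-N+\alpha}$ control provided by (\ref{eq:glue4}).
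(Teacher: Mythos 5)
Your proof follows essentially the same route as the paper's: deriving the $C^N$-bounds on $\nabla\Phi_i$ and $\nabla\Phi_i^{-1}$ from the transport ODE, propagating them through the explicit formula (\ref{eq:R_phiii}) and the chain rule for the Mikado coefficients $a_k$, and then applying $D_{t,q}$ via Leibniz, with the identity $D_{t,q}(\nabla\Phi_i)=-(\nabla\overline{v}_q)\nabla\Phi_i$ (which is the paper's commutator $[\nabla_{\overline{v}_q},\nabla]\Phi_i$ in different notation) and (\ref{eq:glue4}) supplying the two principal contributions.

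However, there is a genuine algebraic error in the final parameter verification. You wish to show that the transport contribution $\delta_q^{1/2}\lambda_q\ell^{-N}$ is dominated by the target $(\epsilon_q\tau_q)^{-1}\ell^{-N+\alpha}$ coming from (\ref{eq:glue4}). Rearranging that requirement gives
\[
\delta_q^{1/2}\lambda_q\,\epsilon_q\tau_q \;\lesssim\; \ell^\alpha,
\qquad\text{i.e.}\qquad
\epsilon_q\,\lambda_q^{-3\alpha} \;\lesssim\; \ell^\alpha,
\]
since $\tau_q\delta_q^{1/2}\lambda_q\sim\lambda_q^{-3\alpha}$. You instead wrote
\[
\tau_q\delta_q^{1/2}\lambda_q \;=\; \lambda_q^{-3\alpha} \;\lesssim\; \epsilon_q\,\ell^\alpha,
\]
with $\epsilon_q$ placed on the wrong side. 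This is false: as $\alpha\to 0$ the left side tends to $1$ while the right side tends to $\epsilon_q=\lambda_q^{-\sigma}\ll 1$, and your stated justification (``$\alpha$ chosen small relative to $\sigma$'') is exactly backward for the inequality as you wrote it --- it would require $3\alpha\gtrsim\sigma$, the opposite of (\ref{eq:alpha_bound}). The correct inequality $\epsilon_q\lambda_q^{-3\alpha}\lesssim\ell^\alpha$ does hold for small $\alpha$ (since $\epsilon_q\ll 1$ and $\ell^\alpha\to 1$ as $\alpha\to 0$), and is precisely what the paper uses. The conceptual structure of your argument is correct, but you should transpose the $\epsilon_q$ factor before asserting the inequality holds.

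A smaller remark: you state that the transport term $\delta_q^{1/2}\lambda_q\ell^{-N}$ is ``potentially larger'' than $(\epsilon_q\tau_q)^{-1}\ell^{-N}$; in fact it is always strictly smaller, since $(\epsilon_q\tau_q)^{-1}=\epsilon_q^{-1}\delta_q^{1/2}\lambda_q^{1+3\alpha}\gg\delta_q^{1/2}\lambda_q$. The check you describe is still worth doing, but the framing should be that you are confirming the transport term is subdominant, not that it might dominate.
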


\begin{proof}
We first observe that (\ref{eq:phi_id}) and (\ref{eq:phi_id_2})
imply $\left\Vert \nabla\Phi_{i}\right\Vert _{N}\lesssim\ell^{-N}$.
Then the fact that $\nabla\Phi_{i}$ is close to $\mathrm{Id}$, and
the elementary identity $d\left(A^{-1}\right)=-A^{-1}\left(dA\right)A^{-1}$
(for any invertible matrix $A$) imply (\ref{eq:per_est1}).

Next, we observe that (\ref{eq:per_est1}) and (\ref{eq:glue3}) imply
(\ref{eq:per_est2}), via the bounds
\[
\left\Vert \underline{R_{i}}\circ\mathbf{\Phi}_{i}\right\Vert _{N}\lesssim_{N}\left\Vert \nabla\Phi_{i}\right\Vert _{0}^{2}\left\Vert \textrm{Id}-\frac{\overline{R}_{q}}{\delta_{q+1}}\right\Vert _{N}+\left\Vert \nabla\Phi_{i}\right\Vert _{N}\left\Vert \nabla\Phi_{i}\right\Vert _{0}\left\Vert \textrm{Id}-\frac{\overline{R}_{q}}{\delta_{q+1}}\right\Vert _{0}\lesssim\ell^{-N}
\]
Then, because of (\ref{eq:per_est2}), and the fact that the derivatives
of $a_{k}$ rapidly decay in $k$, we obtain
\begin{align*}
\left\Vert b_{i,k}\right\Vert _{N} & =\left\Vert \delta_{q+1}^{1/2}\rho_{i}(t)\nabla\Phi_{i}^{-1}a_{k}\left(\underline{R_{i}}\circ\mathbf{\Phi}_{i}\right)\right\Vert _{N}\\
 & \lesssim\delta_{q+1}^{1/2}\left(\left\Vert \nabla\Phi_{i}^{-1}\right\Vert _{N}\left\Vert a_{k}\left(\underline{R_{i}}\circ\mathbf{\Phi}_{i}\right)\right\Vert _{0}+\left\Vert \nabla\Phi_{i}^{-1}\right\Vert _{0}\left\Vert a_{k}\left(\underline{R_{i}}\circ\mathbf{\Phi}_{i}\right)\right\Vert _{N}\right)\\
 & \lesssim\delta_{q+1}^{1/2}\ell^{-N}\left|k\right|^{-2d},
\end{align*}
which establishes (\ref{eq:per_est3}).

Similarly we obtain (\ref{eq:per_est4}) by writing,
\begin{align*}
\left\Vert c_{i,k}\right\Vert _{N} & =\left\Vert \delta_{q+1}^{1/2}\rho_{i}(t)\nabla\Phi_{i}^{-1}\Div_{\underline{x}}\left(\frac{k\wedge a_{k}\left(\underline{R_{i}}\right)}{i2\pi\lambda_{q+1}\left|k\right|^{2}}\right)\circ\mathbf{\Phi}_{i}\right\Vert _{N}\\
 & \lesssim\delta_{q+1}^{1/2}\left|k\right|^{-1}\lambda_{q+1}^{-1}\left(\left\Vert \nabla\Phi_{i}^{-1}\right\Vert _{N}\left\Vert \nabla\left(a_{k}\left(\underline{R_{i}}\right)\right)\circ\mathbf{\Phi}_{i}\right\Vert _{0}\right.\\
&\hspace{1.6in}\left.+\left\Vert \nabla\Phi_{i}^{-1}\right\Vert _{0}\left\Vert \nabla\left(a_{k}\left(\underline{R_{i}}\right)\right)\circ\mathbf{\Phi}_{i}\right\Vert _{N}\right)\\
 & \lesssim\delta_{q+1}^{1/2}\left|k\right|^{-2d}\lambda_{q+1}^{-1}\ell^{-N-1},
\end{align*}
where we have implicitly used the chain rule 
\begin{equation}
\nabla\left(a_{k}\left(\underline{R_{i}}\right)\right)\circ\mathbf{\Phi}_{i}=\nabla\left(a_{k}\left(\underline{R_{i}}\circ\mathbf{\Phi}_{i}\right)\right)\left(\nabla\Phi_{i}\right)^{-1}\label{eq:chain_r}
\end{equation}
in passing to the last line.

We now turn to (\ref{eq:per_est5}), writing
\begin{align*}
\left\Vert D_{t,q}\nabla\Phi_{i}\right\Vert _{N} & =\left\Vert \nabla_{\overline{v}_{q}}\left(\nabla\Phi_{i}\right)+\nabla\partial_{t}\Phi_{i}\right\Vert _{N}\\
&=\left\Vert \left[\nabla_{\overline{v}_{q}},\nabla\right]\Phi_{i}\right\Vert _{N}\\
&\lesssim\left\Vert \nabla\overline{v}_{q}\right\Vert _{N}\left\Vert \nabla\Phi_{i}\right\Vert _{0}+\left\Vert \nabla\overline{v}_{q}\right\Vert _{0}\left\Vert \nabla\Phi_{i}\right\Vert _{N}\\
&\lesssim\delta_{q}^{\frac{1}{2}}\lambda_{q}\ell^{-N}.
\end{align*}

Next, we note that (\ref{eq:per_est5}), (\ref{eq:R_phiii}), (\ref{eq:glue3})
and (\ref{eq:glue4}) imply
\begin{align*}
&\left\Vert D_{t,q}\left(\underline{R_{i}}\circ\mathbf{\Phi}_{i}\right)\right\Vert _{N} \\
& \hspace{0.2in}\lesssim\left\Vert D_{t,q}\left(\nabla\Phi_{i}\right)\left(\textrm{Id}-\frac{\overline{R}_{q}}{\delta_{q+1}}\right)\nabla\Phi_{i}^{T}+\nabla\Phi_{i}\left(\textrm{Id}-\frac{\overline{R}_{q}}{\delta_{q+1}}\right)D_{t,q}\nabla\Phi_{i}^{T}\right\Vert _{N}\\
 & \hspace{0.4in}+\delta_{q+1}^{-1}\left\Vert \nabla\Phi_{i}\left(D_{t,q}\overline{R}_{q}\right)\nabla\Phi_{i}^{T}\right\Vert _{N}\\
 & \hspace{0.2in}\lesssim\delta_{q}^{\frac{1}{2}}\lambda_{q}\ell^{-N}+\left(\epsilon_{q}\tau_{q}\right)^{-1}\ell^{-N+\alpha}\\
&\hspace{0.2in}\lesssim\left(\epsilon_{q}\tau_{q}\right)^{-1}\ell^{-N+\alpha},
\end{align*}
establishing (\ref{eq:per_est6}), where in passing to the last inequality, we have implicitly used $\delta_{q}^{\frac{1}{2}}\lambda_{q}\ll\epsilon_{q}^{-1}\tau_{q}^{-1}\ell^{\alpha}$,
which comes from $\epsilon_{q}\ll1$ (after $\alpha$ is neglected).

Turning to (\ref{eq:per_est7}), we recall the identity $\partial_{\underline{t}}\left(w\circ\mathbf{X}_{i}\right)=\left(D_{t,q}w\right)\circ\mathbf{X}_{i}$
(for any tensor $w$). We then use (\ref{eq:per_est6}), (\ref{eq:per_est2}), and (\ref{eq:per_est1}) to write
\begin{align*}
\left\Vert D_{t,q}b_{i,k}\right\Vert _{N}&=\left\Vert \partial_{\underline{t}}\left(\mathbf{\Phi}_{i}^{*}\underline{b_{i,k}}\circ\mathbf{X}_{i}\right)\circ\mathbf{\Phi}_{i}\right\Vert _{N}\\
&\hspace{0.2in}=\left\Vert \partial_{\underline{t}}\left(\left(\nabla X_{i}\right)\underline{b_{i,k}}\right)\circ\mathbf{\Phi}_{i}\right\Vert _{N}\\
&\hspace{0.2in}=\delta_{q+1}^{1/2}\left\Vert \partial_{\underline{t}}\left(\left(\nabla X_{i}\right)\rho_{i}\left(\underline{t}\right)a_{k}\left(\underline{R_{i}}\right)\right)\circ\mathbf{\Phi}_{i}\right\Vert _{N}.
\end{align*}
The right-hand side of the above is now bounded by
\begin{align*}
&\lesssim\delta_{q+1}^{1/2}\left(\epsilon_{q}\tau_{q}\right)^{-1}\left\Vert \left(\left(\nabla X_{i}\right)a_{k}\left(\underline{R_{i}}\right)\right)\circ\mathbf{\Phi}_{i}\right\Vert _{N}\\
&\hspace{0.2in}+\delta_{q+1}^{1/2}\left\Vert \partial_{\underline{t}}\left(\left(\nabla X_{i}\right)a_{k}\left(\underline{R_{i}}\right)\right)\circ\mathbf{\Phi}_{i}\right\Vert _{N}\\
&\lesssim\delta_{q+1}^{1/2}\left(\epsilon_{q}\tau_{q}\right)^{-1}\left\Vert \left(\nabla\Phi_{i}\right)^{-1}a_{k}\left(\underline{R_{i}}\circ\mathbf{\Phi}_{i}\right)\right\Vert _{N}\\
&\hspace{0.2in}+\delta_{q+1}^{1/2}\left\Vert \left(\nabla\left(\overline{v}_{q}\circ\mathbf{X}_{i}\right)a_{k}\left(\underline{R_{i}}\right)\right)\circ\mathbf{\Phi}_{i}\right\Vert _{N}\\
&\hspace{0.2in}+\delta_{q+1}^{1/2}\left\Vert \left(\nabla X_{i}\right)\left(\nabla a_{k}\left(\underline{R_{i}}\right)\partial_{\underline{t}}\left(\underline{R_{i}}\right)\right)\circ\mathbf{\Phi}_{i}\right\Vert _{N}\\
&\lesssim\left(\epsilon_{q}\tau_{q}\right)^{-1}\delta_{q+1}^{1/2}\ell^{-N}\left|k\right|^{-2d}\\
&\hspace{0.2in}+\delta_{q+1}^{1/2}\left\Vert \left(\nabla\overline{v}_{q}\right)\left(\nabla\Phi_{i}\right)^{-1}a_{k}\left(\underline{R_{i}}\circ\mathbf{\Phi}_{i}\right)\right\Vert _{N}\\
&\hspace{0.2in}+\delta_{q+1}^{1/2}\left\Vert \left(\nabla\Phi_{i}\right)^{-1}\left(\nabla a_{k}\left(\underline{R_{i}}\circ\mathbf{\Phi}_{i}\right)D_{t,q}\left(\underline{R_{i}}\circ\mathbf{\Phi}_{i}\right)\right)\right\Vert _{N}.
\end{align*}
This leads to the bound
\begin{align*}
&\left(\epsilon_{q}\tau_{q}\right)^{-1}\delta_{q+1}^{1/2}\ell^{-N}\left|k\right|^{-2d}+\delta_{q+1}^{1/2}\delta_{q}^{\frac{1}{2}}\lambda_{q}\ell^{-N}\left|k\right|^{-2d}+\left(\epsilon_{q}\tau_{q}\right)^{-1}\delta_{q+1}^{1/2}\ell^{-N+\alpha}\left|k\right|^{-2d}\\
&\hspace{0.2in}\lesssim\left(\epsilon_{q}\tau_{q}\right)^{-1}\delta_{q+1}^{1/2}\ell^{-N}\left|k\right|^{-2d}
\end{align*}
which completes the proof of (\ref{eq:per_est7}).

It remains to show (\ref{eq:per_est8}).  For this, we again use (\ref{eq:chain_r}), and write, using schematic notation,
\begin{align*}
& \left\Vert D_{t,q}c_{i,k}\right\Vert _{N}\\
&\hspace{0.2in}\sim\delta_{q+1}^{1/2}\lambda_{q+1}^{-1}\left|k\right|^{-1}\left\Vert \partial_{\underline{t}}\left(\rho_{i}(\underline{t})\left(\nabla X_{i}\right)*\nabla\left(a_{k}\left(\underline{R_{i}}\right)\right)\right)\circ\mathbf{\Phi}_{i}\right\Vert _{N},
\end{align*}
which leads to the bound
\begin{align*}
&\delta_{q+1}^{1/2}\left(\epsilon_{q}\tau_{q}\right)^{-1}\lambda_{q+1}^{-1}\left|k\right|^{-1}\left\Vert \left(\nabla\Phi_{i}\right)^{-1}*\nabla\left(a_{k}\left(\underline{R_{i}}\circ\mathbf{\Phi}_{i}\right)\right)*\nabla\Phi_{i}^{-1}\right\Vert _{N}\\
&\hspace{0.2in} +\delta_{q+1}^{1/2}\lambda_{q+1}^{-1}\left|k\right|^{-1}\left\Vert \nabla\overline{v}_{q}*\nabla\Phi_{i}^{-1}*\nabla\left(a_{k}\left(\underline{R_{i}}\circ\mathbf{\Phi}_{i}\right)\right)*\nabla\Phi_{i}^{-1}\right\Vert _{N}\\
&\hspace{0.2in} +\delta_{q+1}^{1/2}\lambda_{q+1}^{-1}\left|k\right|^{-1}\left\Vert \left(\nabla\Phi_{i}\right)^{-1}*\nabla\left(\nabla a_{k}\left(\underline{R_{i}}\circ\mathbf{\Phi}_{i}\right)D_{t,q}\left(\underline{R_{i}}\circ\mathbf{\Phi}_{i}\right)\right)*\nabla\Phi_{i}^{-1}\right\Vert _{N}.
\end{align*}
This expression is in turn bounded by
\begin{align*}
&\left(\epsilon_{q}\tau_{q}\right)^{-1}\delta_{q+1}^{1/2}\lambda_{q+1}^{-1}\ell^{-N-1}\left|k\right|^{-2d}+\delta_{q+1}^{1/2}\lambda_{q+1}^{-1}\delta_{q}^{\frac{1}{2}}\lambda_{q}\ell^{-N-1}\left|k\right|^{-2d}\\
&\hspace{0.2in}\lesssim \left(\epsilon_{q}\tau_{q}\right)^{-1}\delta_{q+1}^{1/2}\lambda_{q+1}^{-1}\ell^{-N-1}\left|k\right|^{-2d},
\end{align*}
as desired.  This completes the proof of the stated estimates.
\end{proof}

We now record a useful corollary which will imply (\ref{eq:perturb_1}).

\begin{cor}
\label{cor:There-is-} There is $M=M(d)$ (independent of $q$) such
that 
\begin{align}
\|w^{(c)}\|_{0}+\lambda_{q+1}^{-1}\|\nabla w^{(c)}\|_{0} & \lesssim\delta_{q+1}^{1/2}\lambda_{q+1}^{-1}\ell^{-1}\label{eq:cor1}\\
\|w^{(o)}\|_{0}+\lambda_{q+1}^{-1}\|\nabla w^{(o)}\|_{0} & \leq\frac{M}{4}\delta_{q+1}^{1/2}\label{eq:cor2}\\
\|w_{q+1}\|_{0}+\lambda_{q+1}^{-1}\|\nabla w_{q+1}\|_{0} & \leq\frac{M}{2}\delta_{q+1}^{1/2}\label{eq:cor3}
\end{align}
\end{cor}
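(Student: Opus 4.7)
The plan is to read off the three bounds directly from \Propref{pertub_est} together with the Fourier representation of $w^{(o)}$ and $w^{(c)}$, using crucially that for any fixed time $t$ only a single cutoff $\rho_i$ is nonzero (because the $\rho_i$ have pairwise disjoint supports by the definition of $t_i=j\tau_q$ and $\epsilon_q\ll 1$). Thus the $i$-sum collapses and the problem reduces to estimating single-$i$ Fourier series in $k$ with symbols $b_{i,k}$ (resp.\ $c_{i,k}$) against the phase $e^{i2\pi\langle\lambda_{q+1}k,\Phi_i\rangle}$.

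For (\ref{eq:cor1}), summing (\ref{eq:per_est4}) over $k\neq 0$ and using that $\sum_{k\neq 0}|k|^{-2d}$ converges (as $d\geq 3$), I get
\begin{equation*}
\|w^{(c)}\|_0 \lesssim \sum_{k\neq 0}\|c_{i,k}\|_0 \lesssim \delta_{q+1}^{1/2}\lambda_{q+1}^{-1}\ell^{-1}.
\end{equation*}
For the gradient, the chain rule yields $\nabla(c_{i,k}e^{i2\pi\langle\lambda_{q+1}k,\Phi_i\rangle}) = (\nabla c_{i,k})e^{i2\pi\langle\lambda_{q+1}k,\Phi_i\rangle}+c_{i,k}\,(i2\pi\lambda_{q+1}(\nabla\Phi_i)^T k)e^{i2\pi\langle\lambda_{q+1}k,\Phi_i\rangle}$, so by (\ref{eq:per_est1}) and (\ref{eq:per_est4}),
\begin{equation*}
\|\nabla w^{(c)}\|_0 \lesssim \sum_{k\neq 0}\bigl(\|c_{i,k}\|_1+\lambda_{q+1}|k|\,\|c_{i,k}\|_0\bigr)\lesssim \delta_{q+1}^{1/2}\ell^{-1}\sum_{k\neq 0}|k|^{-2d+1},
\end{equation*}
which, after multiplying by $\lambda_{q+1}^{-1}$, yields (\ref{eq:cor1}).

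For (\ref{eq:cor2}), an identical argument using (\ref{eq:per_est3}) in place of (\ref{eq:per_est4}) gives
\begin{equation*}
\|w^{(o)}\|_0\lesssim \delta_{q+1}^{1/2}\sum_{k\neq 0}|k|^{-2d},\qquad \|\nabla w^{(o)}\|_0\lesssim \delta_{q+1}^{1/2}\bigl(\ell^{-1}+\lambda_{q+1}\bigr)\lesssim \lambda_{q+1}\delta_{q+1}^{1/2},
\end{equation*}
where in the last step I absorbed $\ell^{-1}$ into $\lambda_{q+1}$ via (\ref{eq:length_Freq}). Choosing $M=M(d)$ to be four times the implicit constant (which only depends on $d$ through the convergent $k$-sum and the uniform constants in \Propref{pertub_est}) delivers the sharp form $\leq \tfrac{M}{4}\delta_{q+1}^{1/2}$ as claimed.

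Finally, (\ref{eq:cor3}) is obtained by the triangle inequality $w_{q+1}=w^{(o)}+w^{(c)}$: adding (\ref{eq:cor1}) and (\ref{eq:cor2}) and using $\lambda_{q+1}^{-1}\ell^{-1}\ll 1$ from (\ref{eq:length_Freq}) shows that the $w^{(c)}$ contribution is a negligible perturbation of the $w^{(o)}$ contribution, so for $a$ sufficiently large the sum is bounded by $\tfrac{M}{2}\delta_{q+1}^{1/2}$. No step is genuinely hard; the only subtlety is the bookkeeping of the geometric constant $M$, which must be fixed purely in terms of $d$ (through $\sum_{k\neq 0}|k|^{-2d}$ and the constants in \Propref{pertub_est}) to ensure consistency with the universal geometric constant appearing in (\ref{eq:grad_vq_sup}) and (\ref{eq:perturb_1}).
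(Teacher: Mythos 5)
Your proof is correct and follows the same route as the paper's: collapse the $i$-sum using disjointness of the cutoff supports, then sum the $k$-Fourier series using (\ref{eq:per_est3})--(\ref{eq:per_est4}), the chain rule on the phase, and $\ell^{-1}\ll\lambda_{q+1}$ from (\ref{eq:length_Freq}). The only thing worth emphasizing, which you gesture at but the paper states more explicitly, is that the implicit constant in $\|b_{i,k}\|_0\leq C\delta_{q+1}^{1/2}|k|^{-2d}$ must be recorded as genuinely $q$-independent (a fact extracted from the proof of (\ref{eq:per_est3})), since $M$ in (\ref{eq:cor2})--(\ref{eq:cor3}) must match the universal constant already fixed in the inductive hypothesis (\ref{eq:grad_vq_sup}) and in \Propref{convex_int}.
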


\begin{proof}
Without loss of generality, we may assume that $a$ is large enough to ensure $\left\Vert \nabla\Phi_{i}\right\Vert _{0}\leq2$. 

Recall that $t\in\left[t_{i}-\epsilon_{q}\tau_{q},t_{i}+2\epsilon_{q}\tau_{q}\right]$
so that $$\|w^{(c)}\|_{1}=\left\Vert \sum_{k\neq0}c_{i,k}e^{i2\pi\left\langle \lambda_{q+1}k,\Phi_{i}\right\rangle }\right\Vert _{1},$$
and (\ref{eq:cor1}) follows immediately from (\ref{eq:per_est4}) and
(\ref{eq:length_Freq}).

From the proof of (\ref{eq:per_est3}), there is $C=C(d)$ (independent
of $q$) such that 
\[
\left\Vert b_{i,k}\right\Vert _{0}\leq C\delta_{q+1}^{\frac{1}{2}}\left|k\right|^{-2d}
\]
Then (\ref{eq:cor2}) and (\ref{eq:cor3}) follow immediately from (\ref{eq:per_est3}),
(\ref{eq:length_Freq}) and (\ref{eq:cor1}).
\end{proof}

\subsection{Stress error estimates}

Suppose $t\in\left[t_{i}-\epsilon_{q}\tau_{q},t_{i}+2\epsilon_{q}\tau_{q}\right]$.  To complete the proof of \Propref{convex_int}
it remains to prove
\begin{equation}
\left\Vert R_{q+1}\right\Vert _{\alpha}\lesssim\epsilon_{q+1}\delta_{q+2}\lambda_{q+1}^{-4\alpha}.\label{eq:fin_stress_er}
\end{equation}

We will often need to use an important antidivergence estimate from
\cite{buckmasterOnsagerConjectureAdmissible2017}, stated in the following lemma.
\begin{lem}[Proposition C.2 in \cite{buckmasterOnsagerConjectureAdmissible2017}]
For any $N\in\mathbb{N}_{1}$, $u\in\mathfrak{X}\left(\mathbb{T}^{d}\right)$, and
$\phi\in C^{\infty}\left(\mathbb{T}^{d}\to\mathbb{T}^{d}\right)$
such that $\frac{1}{2}\leq\left|\nabla\phi\right|\leq2$, we have
\begin{align}
&\left\Vert \mathcal{R}\left(u(x)e^{i2\pi\left\langle k,\phi\right\rangle }\right)\right\Vert _{\alpha}\nonumber\\
&\hspace{0.2in}\lesssim_{N}\left|k\right|^{\alpha-1}\left\Vert u\right\Vert _{0}+\left|k\right|^{\alpha-N}\left(\left\Vert u\right\Vert _{0}\left\Vert \phi\right\Vert _{N+\alpha}+\left\Vert u\right\Vert _{N+\alpha}\right).\label{eq:antidiv_est}
\end{align}
\end{lem}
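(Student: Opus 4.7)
The plan is to establish the estimate via iterated integration by parts, using the oscillatory factor $e^{i2\pi\langle k,\phi\rangle}$ to extract powers of $|k|^{-1}$ at each step, while the $C^\alpha\to C^\alpha$ boundedness of the Calder\'on-Zygmund operator $\mathcal{R}\operatorname{div}$ (inherited from the construction of $\mathcal{R}$ in \Appref{Geometric-preliminaries}) controls each divergence-form term that appears.

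The key device is an ``oscillation inverter''. The hypothesis $\frac{1}{2}\le|\nabla\phi|\le 2$ is read as $\nabla\phi$ being uniformly invertible, so that $|(\nabla\phi(x))^T k|\gtrsim|k|$ uniformly in $x$. The vector field
\[
\xi_k(x):=\frac{(\nabla\phi(x))^T k}{2\pi i\,|(\nabla\phi(x))^T k|^2},\qquad |\xi_k|\lesssim|k|^{-1},
\]
then satisfies $\xi_k\cdot\nabla e^{i2\pi\langle k,\phi\rangle}=e^{i2\pi\langle k,\phi\rangle}$ by the chain rule. Setting $T_j^{im}:=u_j^i\xi_k^m$ (with divergence taken in the second index), the Leibniz rule yields
\[
u_j\,e^{i2\pi\langle k,\phi\rangle}=\operatorname{div}\!\bigl(T_j\,e^{i2\pi\langle k,\phi\rangle}\bigr)+u_{j+1}\,e^{i2\pi\langle k,\phi\rangle},\qquad u_{j+1}:=-\operatorname{div}(T_j).
\]
Starting from $u_0:=u$ and iterating $N$ times,
\[
\mathcal{R}(u\,e^{i2\pi\langle k,\phi\rangle})=\sum_{j=0}^{N-1}(\mathcal{R}\operatorname{div})\bigl(T_j\,e^{i2\pi\langle k,\phi\rangle}\bigr)+\mathcal{R}\bigl(u_N\,e^{i2\pi\langle k,\phi\rangle}\bigr).
\]

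Each divergence term is estimated by the $C^\alpha$-boundedness of $\mathcal{R}\operatorname{div}$, the H\"older product rule, and the elementary bound $\|e^{i2\pi\langle k,\phi\rangle}\|_\alpha\lesssim|k|^\alpha$ (which follows from $|\nabla\phi|\le 2$). Everything then reduces to inductively controlling $\|u_j\|_\alpha$ and $\|T_j\|_\alpha=\|u_j\otimes\xi_k\|_\alpha$. Each application of the operator $-\operatorname{div}(\,\cdot\,\xi_k)$ contributes one factor of $|k|^{-1}$, and the Leibniz rule distributes one new derivative onto $u$ or $\phi$; a schematic induction gives
\[
\|u_j\|_\alpha\lesssim|k|^{-j}\sum_{a+b=j}\|u\|_{a+\alpha}\,Q_b(\nabla\phi),
\]
where $Q_b(\nabla\phi)$ is a polynomial in derivatives of $\nabla\phi$ of total order $b$. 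By the standard interpolation inequality $\|f\|_j\lesssim\|f\|_0^{1-j/N}\|f\|_N^{j/N}$, each intermediate index ($0<a<j$) is absorbed into the two extreme contributions $\|u\|_0\|\phi\|_{j+1+\alpha}$ and $\|u\|_{j+\alpha}$, so the level-$j$ term is of order $|k|^{\alpha-1-j}$ times these. The terminal remainder $\mathcal{R}(u_N\,e^{i2\pi\langle k,\phi\rangle})$ is handled analogously, contributing the $|k|^{\alpha-N}$ term in the statement. Summing the resulting geometric series in $|k|^{-1}$ and keeping only the extreme levels $j=0$ and $j=N-1$ produces the claimed inequality.

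The main obstacle is the inductive bookkeeping of the amplitudes $u_j$: each step generates, via the Leibniz rule applied to $\operatorname{div}(u_j\otimes\xi_k)$, a growing number of mixed terms involving derivatives of $u$, of $(\nabla\phi)^T k$, and of $|(\nabla\phi)^T k|^{-2}$. The cleanest way is to prove a structural lemma stating that $u_j$ is a finite linear combination of products of the form (derivatives of $u$) $\cdot$ (polynomial in derivatives of $\nabla\phi$) $\cdot$ (powers of $|(\nabla\phi)^T k|^{-2}$), with total derivative count $j$ and an overall prefactor $|k|^{-j}$. Once this structural description is in hand, the desired H\"older bounds follow from repeated application of the product rule $\|fg\|_r\lesssim\|f\|_0[g]_r+[f]_r\|g\|_0$ recorded in \Secref{prelim} and the interpolation inequality, and the $|k|^{-1}$ decay per iteration guarantees that the finite sum up to level $N$ converges to the stated bound.
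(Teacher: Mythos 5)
The paper does not actually prove this statement: it is imported as a black box from \cite{buckmasterOnsagerConjectureAdmissible2017} (Proposition C.2 there), so there is no internal proof to compare against. Your argument is the standard proof of this type of oscillatory antidivergence estimate, and it is essentially the one used in the cited literature: invert the oscillation with the vector field $\xi_k=\frac{(\nabla\phi)^{T}k}{2\pi i|(\nabla\phi)^{T}k|^{2}}$, iterate the resulting divergence-form identity $N$ times, bound the divergence-form terms by the $C^{\alpha}$-boundedness of the zeroth-order operator $\mathcal{R}\operatorname{div}$ together with $\|e^{i2\pi\langle k,\phi\rangle}\|_{\alpha}\lesssim|k|^{\alpha}$, and absorb the intermediate levels into the extreme ones by interpolation and Young's inequality. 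Two small points: the hypothesis $\frac12\le|\nabla\phi|\le 2$ must indeed be read as uniform invertibility of $\nabla\phi$ with $\|(\nabla\phi)^{-1}\|_{0}\lesssim1$ (otherwise $|(\nabla\phi)^{T}k|\gtrsim|k|$ fails and the lemma is false, e.g.\ for constant $\phi$); this is how the condition is meant and used in the source.

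The one place where your bookkeeping does not quite close is the terminal remainder $\mathcal{R}(u_{N}e^{i2\pi\langle k,\phi\rangle})$. By your own structural lemma, $u_{N}=-\operatorname{div}(u_{N-1}\otimes\xi_k)$ carries up to $N$ derivatives of $\nabla\phi$, i.e.\ $\nabla^{N+1}\phi$, so estimating it (whether via $\|\mathcal{R}g\|_{\alpha}\lesssim\|g\|_{0}$ or $\|\mathcal{R}g\|_{\alpha}\lesssim\|g\|_{\alpha}$) produces $\|u\|_{0}\,\|\phi\|_{N+1+\alpha}$, or at best $\|u\|_{0}[\phi]_{N+1}$, neither of which is controlled by the $\|\phi\|_{N+\alpha}$ appearing in the statement; there is no interpolation available to trade the extra derivative away, since no higher norm of $\phi$ is assumed. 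The terms $j\le N-1$ of your sum are fine (there $T_{N-1}=u_{N-1}\otimes\xi_k$ only sees $\|\phi\|_{N+\alpha}$), so the issue is confined to the last step. You should either handle the terminal term differently, or state the conclusion with $\|\nabla\phi\|_{N+\alpha}$ (equivalently $\|\phi\|_{N+1+\alpha}$) in the second slot --- which is harmless for every application in this paper, since there $\phi=\Phi_{i}$ satisfies $\|\Phi_{i}\|_{N+1+\alpha}\lesssim\ell^{-N-\alpha}$ and $N$ is taken large so that the extra factor $\ell^{-1}$ is absorbed by (\ref{eq:trick2}). Apart from this off-by-one in the derivative count on the phase, the proof is correct.
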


Another fact we will use often is that when $N$ is chosen large enough (independent of
$q$), we have
\begin{equation}
\ell_{q}^{N+10\alpha}\lambda_{q+1}^{N-1-10\alpha}>1\label{eq:trick2}
\end{equation}
This comes from 
\[
-\beta b+\beta-1-\frac{\sigma}{2}+b\left(\frac{N-1-10\alpha}{N+10\alpha}\right)>0
\]
which is implied by (\ref{eq:intermed_para}) when $N=N\left(b,\beta,\sigma,\alpha\right)$
is large enough. Unless otherwise noted, we will be using this choice of $N$.

\subsubsection{Nash error}

By using (\ref{eq:antidiv_est}) and \Propref{pertub_est}, we have
\begin{align*}
\left\Vert \mathcal{R}\left(w^{(o)}\cdot\nabla\overline{v}_{q}\right)\right\Vert _{\alpha} & \lesssim\sum_{k\neq0}\left\Vert \mathcal{R}\left(b_{i,k}\cdot\nabla\overline{v}_{q}e^{i2\pi\left\langle \lambda_{q+1}k,\Phi_{i}\right\rangle }\right)\right\Vert _{\alpha}\\
 & \lesssim_{N}\sum_{k\neq0}\left|\lambda_{q+1}k\right|^{\alpha-1}\left|k\right|^{-2d}\delta_{q+1}^{\frac{1}{2}}\delta_{q}^{\frac{1}{2}}\lambda_{q}\\
 & \quad+\left|\lambda_{q+1}k\right|^{\alpha-N}\left|k\right|^{-2d}\left(\delta_{q+1}^{\frac{1}{2}}\delta_{q}^{\frac{1}{2}}\lambda_{q}\ell^{-N-2\alpha}\right)\\
 & \lesssim\delta_{q+1}^{\frac{1}{2}}\delta_{q}^{\frac{1}{2}}\lambda_{q+1}^{\alpha-1}\lambda_{q}\lesssim\epsilon_{q+1}\delta_{q+2}\lambda_{q+1}^{-4\alpha}
\end{align*}
where we used (\ref{eq:trick2}) to pass to the last line, and (\ref{eq:stress_size_ind1})
in the last inequality. 

Similarly,
\begin{align*}
\left\Vert \mathcal{R}\left(w^{(c)}\cdot\nabla\overline{v}_{q}\right)\right\Vert _{\alpha} & \lesssim\sum_{k\neq0}\left\Vert \mathcal{R}\left(c_{i,k}\cdot\nabla\overline{v}_{q}e^{i2\pi\left\langle \lambda_{q+1}k,\Phi_{i}\right\rangle }\right)\right\Vert _{\alpha}\\
 & \lesssim_{N}\sum_{k\neq0}\left|\lambda_{q+1}k\right|^{\alpha-1}\left|k\right|^{-2d}\delta_{q+1}^{\frac{1}{2}}\delta_{q}^{\frac{1}{2}}\lambda_{q}\left(\lambda_{q+1}\ell\right)^{-1}\\
 & \quad+\left|\lambda_{q+1}k\right|^{\alpha-N}\left|k\right|^{-2d}\left(\delta_{q+1}^{\frac{1}{2}}\delta_{q}^{\frac{1}{2}}\lambda_{q}\ell^{-N-2\alpha}\right)\left(\lambda_{q+1}\ell\right)^{-1}\\
 & \lesssim\delta_{q+1}^{\frac{1}{2}}\lambda_{q+1}^{\alpha-1}\delta_{q}^{\frac{1}{2}}\lambda_{q}\lesssim\epsilon_{q+1}\delta_{q+2}\lambda_{q+1}^{-4\alpha}
\end{align*}
The only difference is the term $\left(\lambda_{q+1}\ell\right)^{-1}$
which is less than $1$ by (\ref{eq:length_Freq}). Thus we have 
\[
\left\Vert R_{\mathrm{Nash}}\right\Vert _{\alpha}\lesssim\epsilon_{q+1}\delta_{q+2}\lambda_{q+1}^{-4\alpha}
\]

\subsubsection{Transport error}

The important observation here is that $D_{t,q}\left(e^{i2\pi\left\langle \lambda_{q+1}k,\Phi_{i}\right\rangle }\right)=0$,
which helps avoid an extra factor $\lambda_{q+1}$.

Arguing as above, we have 
\begin{align*}
\left\Vert \mathcal{R}D_{t,q}w^{(o)}\right\Vert _{\alpha} & \lesssim\sum_{k\neq0}\left\Vert \mathcal{R}\left(D_{t,q}b_{i,k}e^{i2\pi\left\langle \lambda_{q+1}k,\Phi_{i}\right\rangle }\right)\right\Vert _{\alpha}\\
 & \lesssim_{N}\sum_{k\neq0}\left|\lambda_{q+1}k\right|^{\alpha-1}\left|k\right|^{-2d}\delta_{q+1}^{\frac{1}{2}}\left(\epsilon_{q}\tau_{q}\right)^{-1}\\
 & \quad+\left|\lambda_{q+1}k\right|^{\alpha-N}\left|k\right|^{-2d}\left(\delta_{q+1}^{\frac{1}{2}}\ell^{-N-\alpha}\right)\left(\epsilon_{q}\tau_{q}\right)^{-1}\\
 & \lesssim\delta_{q+1}^{\frac{1}{2}}\lambda_{q+1}^{\alpha-1}\left(\epsilon_{q}\tau_{q}\right)^{-1}=\epsilon_{q}^{-1}\delta_{q+1}^{\frac{1}{2}}\delta_{q}^{\frac{1}{2}}\lambda_{q+1}^{\alpha-1}\lambda_{q}^{1+3\alpha}\\
 & \lesssim\epsilon_{q+1}\delta_{q+2}\lambda_{q+1}^{-4\alpha}
\end{align*}
and 
\begin{align*}
\left\Vert \mathcal{R}D_{t,q}w^{(c)}\right\Vert _{\alpha} & \lesssim\sum_{k\neq0}\left\Vert \mathcal{R}\left(D_{t,q}c_{i,k}e^{i2\pi\left\langle \lambda_{q+1}k,\Phi_{i}\right\rangle }\right)\right\Vert _{\alpha}\\
 & \lesssim_{N}\sum_{k\neq0}\left|\lambda_{q+1}k\right|^{\alpha-1}\left|k\right|^{-2d}\delta_{q+1}^{\frac{1}{2}}\left(\epsilon_{q}\tau_{q}\right)^{-1}\left(\lambda_{q+1}\ell\right)^{-1}\\
 & \quad+\left|\lambda_{q+1}k\right|^{\alpha-N}\left|k\right|^{-2d}\left(\delta_{q+1}^{\frac{1}{2}}\ell^{-N-\alpha}\right)\left(\epsilon_{q}\tau_{q}\right)^{-1}\left(\lambda_{q+1}\ell\right)^{-1}\\
 & \lesssim\delta_{q+1}^{\frac{1}{2}}\lambda_{q+1}^{\alpha-1}\left(\epsilon_{q}\tau_{q}\right)^{-1}=\epsilon_{q}^{-1}\delta_{q+1}^{\frac{1}{2}}\delta_{q}^{\frac{1}{2}}\lambda_{q+1}^{\alpha-1}\lambda_{q}^{1+3\alpha}\\
 & \lesssim\epsilon_{q+1}\delta_{q+2}\lambda_{q+1}^{-4\alpha}
\end{align*}
Thus we have $\left\Vert R_{\mathrm{trans}}\right\Vert _{\alpha}\lesssim\epsilon_{q+1}\delta_{q+2}\lambda_{q+1}^{-4\alpha}$.

\subsubsection{Oscillation error}

We observe that 
\begin{align*}
R_{\mathrm{osc}} & :=\mathcal{R}\Div\left(\overline{R}_{q}+w_{q+1}\otimes w_{q+1}\right)\\
 & =\underbrace{\mathcal{R}\Div\left(\overline{R}_{q}+w^{(o)}\otimes w^{(o)}\right)}_{:=\mathcal{O}_{1}}\\
&\hspace{0.2in}+\underbrace{\mathcal{R}\Div\left(w^{(c)}\otimes w^{(o)}+w^{(o)}\otimes w^{(c)}+w^{(c)}\otimes w^{(c)}\right)}_{:=\mathcal{O}_{2}}
\end{align*}
Then, using \Corref{There-is-}, and the fact that $\mathcal{R}\Div$ is a Calderón-Zygmund operator, we obtain
\begin{align*}
\left\Vert \mathcal{O}_{2}\right\Vert _{\alpha} & \lesssim\left\Vert w^{(c)}\right\Vert _{\alpha}\left\Vert w^{(o)}\right\Vert _{\alpha}+\left\Vert w^{(c)}\right\Vert _{\alpha}^{2}\lesssim\delta_{q+1}\left(\ell\lambda_{q+1}\right)^{-1}\\
 & =\epsilon_{q}^{-\frac{1}{2}}\delta_{q+1}^{\frac{1}{2}}\delta_{q}^{\frac{1}{2}}\lambda_{q+1}^{-1}\lambda_{q}^{1+\frac{3\alpha}{2}}\lesssim\epsilon_{q+1}\delta_{q+2}\lambda_{q+1}^{-4\alpha}
\end{align*}
where we have once again used (\ref{eq:stress_size_ind1}). 

On the other hand, because $\rho_{i}^{2}=1$ on $\supp\overline{R}_{q}$,
we have 
\begin{align*}
\mathcal{O}_{1} & =\mathcal{R}\Div\left(\overline{R}_{q}+\delta_{q+1}\rho_{i}^{2}\mathbf{\Phi}_{i}^{*}\left(W(\underline{R_{i}},\lambda_{q+1}\cdot)\otimes W(\underline{R_{i}},\lambda_{q+1}\cdot)\right)\right)\\
 & =\mathcal{R}\Div\bigg(\overline{R}_{q}+\delta_{q+1}\rho_{i}^{2}\bigg(\textrm{Id}-\frac{\overline{R}_{q}}{\delta_{q+1}}\bigg)\\
&\hspace{1.2in}+\delta_{q+1}\rho_{i}^{2}\mathbf{\Phi}_{i}^{*}\bigg(\sum_{k\in\mathbb{Z}^{d}\backslash\{0\}}C_{k}\left(\underline{R_{i}}\right)e^{i2\pi\left\langle \lambda_{q+1}k,\cdot\right\rangle }\bigg)\bigg)\\
 & =\sum_{k\in\mathbb{Z}^{d}\backslash\{0\}}\delta_{q+1}\rho_{i}^{2}\mathcal{R}\Div\left(\mathbf{\Phi}_{i}^{*}\left(C_{k}\left(\underline{R_{i}}\right)\right)e^{i2\pi\left\langle \lambda_{q+1}k,\Phi_{i}\right\rangle }\right)\\
 & =\sum_{k\in\mathbb{Z}^{d}\backslash\{0\}}\delta_{q+1}\rho_{i}^{2}\mathcal{R}\left(\Div\left(\mathbf{\Phi}_{i}^{*}\left(C_{k}\left(\underline{R_{i}}\right)\right)\right)e^{i2\pi\left\langle \lambda_{q+1}k,\Phi_{i}\right\rangle }\right)\\
&\hspace{0.8in}+\delta_{q+1}\rho_{i}^{2}\mathcal{R}\underbrace{\left(d_{x}\left(e^{i2\pi\left\langle \lambda_{q+1}k,\Phi_{i}\right\rangle }\right)\lrcorner\mathbf{\Phi}_{i}^{*}\left(C_{k}\left(\underline{R_{i}}\right)\right)\right)}_{\mathcal{O}_{3}}
\end{align*}
We note that
\begin{align*}
\mathcal{O}_{3} & =d_{x}\left(\mathbf{\Phi}_{i}^{*}e^{i2\pi\left\langle \lambda_{q+1}k,\cdot\right\rangle }\right)\lrcorner\mathbf{\Phi}_{i}^{*}\left(C_{k}\left(\underline{R_{i}}\right)\right)=\mathbf{\Phi}_{i}^{*}\left(\left(d_{\underline{x}}e^{i2\pi\left\langle \lambda_{q+1}k,\cdot\right\rangle }\right)\lrcorner C_{k}\left(\underline{R_{i}}\right)\right)=0
\end{align*}
because of (\ref{eq:kflat_c}). Then because of (\ref{eq:antidiv_est})
and (\ref{eq:stress_size_ind1}):
\begin{align*}
\left\Vert \mathcal{O}_{1}\right\Vert _{\alpha} & \lesssim\sum_{k\in\mathbb{Z}^{d}\backslash\{0\}}\left\Vert \delta_{q+1}\mathcal{R}\left(\Div\left(\nabla\Phi_{i}^{-1}C_{k}\left(\underline{R_{i}}\circ\mathbf{\Phi}_{i}\right)\nabla\Phi_{i}^{-T}\right)e^{i2\pi\left\langle \lambda_{q+1}k,\Phi_{i}\right\rangle }\right)\right\Vert _{\alpha}\\
 & \lesssim_{N}\sum_{k\neq0}\left|\lambda_{q+1}k\right|^{\alpha-1}\left|k\right|^{-2d}\delta_{q+1}\ell^{-1}+\left|\lambda_{q+1}k\right|^{\alpha-N}\left|k\right|^{-2d}\left(\delta_{q+1}\ell^{-N-3\alpha}\right)\ell^{-1}\\
 & \lesssim\lambda_{q+1}^{\alpha-1}\delta_{q+1}\ell^{-1}=\epsilon_{q}^{-\frac{1}{2}}\delta_{q+1}^{\frac{1}{2}}\delta_{q}^{\frac{1}{2}}\lambda_{q+1}^{\alpha-1}\lambda_{q}^{1+\frac{3\alpha}{2}}\lesssim\epsilon_{q+1}\delta_{q+2}\lambda_{q+1}^{-4\alpha}
\end{align*}
Therefore $\left\Vert R_{\mathrm{osc}}\right\Vert _{\alpha}\lesssim\epsilon_{q+1}\delta_{q+2}\lambda_{q+1}^{-4\alpha}$.

\subsubsection{Dissipative error}

Without loss of generality, we may assume $2\alpha+2\gamma<1$ (by choosing $\alpha$ sufficiently small). Because $\mathcal{R}$
and $\left(-\Delta\right)^{\gamma}$ commute, and because $\left(-\Delta\right)^{\gamma}$
is a bounded map from $C^{2\gamma+2\alpha}$ to $C^{\alpha}$ (\cite[Theorem B.1]{derosaInfinitelyManyLeray2019}),
we have
\begin{align*}
\left\Vert R_{\mathrm{dis}}\right\Vert _{\alpha} & \lesssim\left\Vert \mathcal{R}w_{q+1}\right\Vert _{2\alpha+2\gamma}\lesssim\left\Vert \mathcal{R}w_{q+1}\right\Vert _{1}^{2\gamma+2\alpha}\left\Vert \mathcal{R}w_{q+1}\right\Vert _{0}^{1-2\gamma-2\alpha}.
\end{align*}
Then because $\nabla\mathcal{R}$ is a Calderón-Zygmund operator,
and because of \Corref{There-is-}.:
\[
\left\Vert \mathcal{R}w_{q+1}\right\Vert _{1}\lesssim\left\Vert \nabla\mathcal{R}w_{q+1}\right\Vert _{0}\leq\left\Vert \nabla\mathcal{R}w_{q+1}\right\Vert _{\alpha}\lesssim\left\Vert w_{q+1}\right\Vert _{\alpha}\lesssim\delta_{q+1}^{\frac{1}{2}}\lambda_{q+1}^{\alpha}
\]
Meanwhile, because of (\ref{eq:antidiv_est}) : 
\begin{align*}
\left\Vert \mathcal{R}w_{q+1}\right\Vert _{\alpha} & =\sum_{k\neq0}\left\Vert \mathcal{R}\left(\left(b_{i,k}+c_{i,k}\right)e^{i2\pi\left\langle \lambda_{q+1}k,\Phi_{i}\right\rangle }\right)\right\Vert _{\alpha}\\
 & \lesssim_{N}\sum_{k\neq0}\left|\lambda_{q+1}k\right|^{\alpha-1}\left|k\right|^{-2d}\delta_{q+1}^{\frac{1}{2}}+\left|\lambda_{q+1}k\right|^{\alpha-N}\left|k\right|^{-2d}\delta_{q+1}^{\frac{1}{2}}\ell^{-N-\alpha}\\
 & \lesssim\delta_{q+1}^{\frac{1}{2}}\lambda_{q+1}^{\alpha-1}
\end{align*}
Therefore:
\[
\left\Vert R_{\mathrm{dis}}\right\Vert _{\alpha}\lesssim\delta_{q+1}^{\frac{1}{2}}\lambda_{q+1}^{\alpha\left(2\gamma+2\alpha\right)+\left(\alpha-1\right)\left(1-2\gamma-2\alpha\right)}\lesssim\epsilon_{q+1}\delta_{q+2}\lambda_{q+1}^{-4\alpha}
\]
because of (\ref{eq:stress_size_ind3}), when $\alpha=\alpha\left(\sigma,b,\beta,\gamma\right)$
is small enough. This completes the proof of (\ref{eq:fin_stress_er}), and therefore of Proposition \ref{prop:convex_int}.

\appendix

\section{\label{app:Geometric-preliminaries}Geometric preliminaries}

We recall the Hodge decomposition
\[
\mathrm{Id}=\mathcal{P}_{1}+\mathcal{P}_{2}+\mathcal{P}_{3}
\]
where $\mathcal{P}_{1}:=d\left(-\Delta\right)^{-1}\delta$ and $\mathcal{P}_{2}:=\delta\left(-\Delta\right)^{-1}d$
and $\mathcal{P}_{3}$ maps to harmonic forms (cf. \cite[Section 5.8]{Taylor_PDE1}).
We observe that $\mathcal{P}_{1},\mathcal{P}_{2}$ are Calderón-Zygmund
operators. We also recall that $\delta=-\Div$, where $\left(\Div T\right)^{i_{1}...i_{k}}=\nabla_{j}T^{ji_{1}...i_{k}}$
for any tensor $T$.

Due to the musical isomorphism, the Hodge projections $\mathcal{P}_{i}$
are also defined on vector fields, and we also write $\sharp\mathcal{P}_{i}\flat$
as $\mathcal{P}_{i}$ for convenience (unless ambiguity arises).

Because the torus is flat, we have the identities 
\begin{align}
\delta\flat\left(X\cdot\nabla Y\right) & =\delta\flat\left(Y\cdot\nabla X\right)\nonumber \\
\mathcal{P}_{1}\left(X\cdot\nabla Y\right) & =\mathcal{P}_{1}\left(Y\cdot\nabla X\right)\label{eq:ident_P1}
\end{align}
for any divergence-free vector fields $X,Y$. On the torus, harmonic
1-forms (or vector fields) are precisely those which have mean zero.

\begin{defn}[Time-dependent Lie derivative]
 For any smooth family of diffeomorphisms $\left(F_{t}\right)$ and
differential forms $\left(\alpha_{t}\right)$ we have 
\[
\partial_{t}\left(F_{t}^{*}\alpha_{t}\right)=F_{t}^{*}\left(\mathcal{L}_{X_{t}}\alpha_{t}+\partial_{t}\alpha_{t}\right)
\]
where $\left(X_{t}\right)$ is a time-dependent vector field defined
by $\partial_{t}F_{t}=X_{t}\circ F_{t}$.
\end{defn}

\begin{lem}
For any diffeomorphism $\Phi$, vector field $u$ and differential
form $\alpha$, we recall the pullback identity:
\begin{align}
\Phi^{*}\left(\mathcal{L}_{u}\alpha\right) & =\mathcal{L}_{\Phi^{*}u}\Phi^{*}\alpha\label{eq:pullback_lie}
\end{align}
\end{lem}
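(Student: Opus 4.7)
The plan is to derive the identity $\Phi^{*}(\mathcal{L}_{u}\alpha) = \mathcal{L}_{\Phi^{*}u}\Phi^{*}\alpha$ by combining the flow characterization of the Lie derivative with the functoriality of pullback. First I would let $\phi_{t}$ denote the flow of $u$ on the target and define $\psi_{t} := \Phi^{-1}\circ\phi_{t}\circ\Phi$. A direct chain-rule computation gives $\frac{d}{dt}\psi_{t}(p) = (d\Phi)^{-1}_{\phi_{t}(\Phi(p))}\,u(\phi_{t}(\Phi(p))) = (\Phi^{*}u)(\psi_{t}(p))$ with $\psi_{0} = \mathrm{Id}$, so by uniqueness of flows $\psi_{t}$ is the flow of $\Phi^{*}u$. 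This conjugation between the two flows is the only real content of the identity.

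Next I would invoke the definition $\mathcal{L}_{\Phi^{*}u}\Phi^{*}\alpha = \frac{d}{dt}|_{t=0}\,\psi_{t}^{*}\Phi^{*}\alpha$ and use the contravariant functoriality $\psi_{t}^{*}\circ\Phi^{*} = (\Phi\circ\psi_{t})^{*} = (\phi_{t}\circ\Phi)^{*} = \Phi^{*}\circ\phi_{t}^{*}$ to rewrite the integrand as $\Phi^{*}\phi_{t}^{*}\alpha$. Since $\Phi$ does not depend on $t$, the linear operator $\Phi^{*}$ commutes with $\frac{d}{dt}|_{t=0}$, so the expression becomes $\Phi^{*}\bigl(\frac{d}{dt}|_{t=0}\phi_{t}^{*}\alpha\bigr) = \Phi^{*}\mathcal{L}_{u}\alpha$, which is the claim.

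An alternative approach, avoiding flows entirely, is to use Cartan's magic formula $\mathcal{L}_{u} = d\iota_{u} + \iota_{u}d$ together with two algebraic facts: the naturality of the exterior derivative, $d\circ\Phi^{*} = \Phi^{*}\circ d$, and the diffeomorphism-pullback identity for interior products, $\Phi^{*}\iota_{v}\beta = \iota_{\Phi^{*}v}\Phi^{*}\beta$, verified pointwise by unwinding both sides on tangent vectors. Linearity then yields the result immediately, and this variant has the pedagogical advantage of making clear why each term in Cartan's formula transforms correctly.

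Since the statement is a classical identity of smooth differential geometry, there is no genuine analytic obstacle. The only bookkeeping concern is that the pullback $\Phi^{*}u$ of a vector field requires $\Phi$ to be a diffeomorphism in order to be defined on the source (this is why one inverts $d\Phi$ in the definition); this hypothesis is explicitly present in the statement, so there is nothing further to check.
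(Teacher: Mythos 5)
The paper does not prove this lemma; it simply records it as a recalled standard identity of differential geometry. Your proposal is correct and, if anything, goes beyond what the paper supplies. The flow-conjugation argument is the clean canonical proof: the single genuine input is that if $\phi_t$ is the flow of $u$, then $\Phi^{-1}\circ\phi_t\circ\Phi$ is the flow of $\Phi^*u$, after which functoriality of pullback and commuting $\Phi^*$ (a $t$-independent linear map) past $\tfrac{d}{dt}\big|_{t=0}$ finishes it. Your alternative via Cartan's formula $\mathcal{L}_u = d\iota_u + \iota_u d$ together with $d\circ\Phi^* = \Phi^*\circ d$ and $\Phi^*(\iota_v\beta) = \iota_{\Phi^*v}\Phi^*\beta$ is also correct and is more in the algebraic spirit of the computations in Section~\ref{sec:Convex-integration-and}, where Cartan-type identities and interior products are used repeatedly. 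Either route would serve as an adequate justification had the paper chosen to include one; there is no gap.
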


\begin{rem}
The pullback of a 1-form has a different meaning from the pullback
of a vector field, and we do not have $\Phi^{*}\flat X=\flat\Phi^{*}X$
unless $\Phi$ is an isometry. 
\end{rem}

We conclude this appendix by introducing several operators that play a key role in our analysis.  In particular,
we will make use of the antidivergence operator 
$$\mathcal{R}:C^{\infty}\left(\mathbb{T}^{d},\mathbb{R}^{d}\right)\to C^{\infty}\left(\mathbb{T}^{d},\mathcal{S}_{0}^{d\times d}\right),$$ given by
\begin{align}
\left(\mathcal{R}v\right)_{ij} & =\mathcal{R}_{ijk}v^{k},\label{eq:antidiv}
\end{align}
with
\begin{align}
\mathcal{R}_{ijk} & :=-\frac{d-2}{d-1}\Delta^{-2}\partial_{i}\partial_{j}\partial_{k}-\frac{1}{d-1}\Delta^{-1}\partial_{k}\delta_{ij}+\Delta^{-1}\partial_{i}\delta_{jk}+\Delta^{-1}\partial_{j}\delta_{ik}.\nonumber 
\end{align}
Note that $\div\mathcal{R}v=v-\dashint_{\mathbb{T}^{d}}v=\left(1-\mathcal{P}_{3}\right)v$
for any vector field $v$. Moreover, using the musical isomorphism, the operator $\mathcal{R}$
can also be defined on 1-forms, and we will often write $\mathcal{R}\sharp$ as $\mathcal{R}$ 
to simplify notation.

We also define the higher-dimensional analogue of the Biot-Savart operator as $\mathcal{B}:=\left(-\Delta\right)^{-1}d\flat$,
mapping from vector fields to 2-forms. We then have 
\[
\sharp\delta\mathcal{B}=\mathcal{P}_{2}
\]
which implies $\sharp\delta\mathcal{B}v=v-\dashint_{\mathbb{T}^{d}}v=\mathcal{P}_{2}v$
for any divergence-free vector field $v$.

\end{document}